\theoremstyle{definition}
\newtheorem{D}{Definition}
\theoremstyle{plain}
\newtheorem*{theorem*}{Theorem}
\newtheorem{theorem}{Theorem}[section]
\newtheorem{lemma}[theorem]{Lemma}
\newtheorem{cor}[theorem]{Corollary}
\newtheorem{prop}[theorem]{Proposition}
\newtheorem{conj}[theorem]{Conjecture}
\theoremstyle{remark}
\newtheorem{rem}[theorem]{Remark}
\newcommand{\R}{\ensuremath{\mathbb R}}
\newcommand{\s}{\ensuremath{\mathbb S}}
\newcommand{\eps}{\ensuremath{\varepsilon}}
\newcommand{\p}{\ensuremath{\frac{2n}{n-2}}}
\newcommand{\norm}[1]{\ensuremath{\left\|#1\right\|}}
\newcommand{\cutoff}{\ensuremath{\rho_{\varepsilon}}}
\newcommand{\tub}[1]{\ensuremath{\Sigma^{#1}}}
\newcommand{\Sob}[1]{\ensuremath{W^{1,#1}(X)}}
\newcommand{\Yl}[1]{\ensuremath{Y_{\ell}(#1)}}
\DeclareMathOperator{\vol}{Vol}
\author{Ilaria Mondello}
\title{The Local Yamabe Constant of Einstein Stratified Spaces}
\begin{document}
\date{}
\maketitle

\nocite{*}

\begin{abstract}
On a compact stratified space $(X,g)$ there exists a metric of constant scalar curvature in the conformal class of $g$, if the scalar curvature $S_g$ satisfies an integrability condition and if the Yamabe constant of $X$ is strictly smaller than the local Yamabe constant $\Yl{X}$, another conformal invariant introduced in the recent work of K. Akutagawa, G. Carron and R. Mazzeo. Such invariant depends on the local structure of $X$, in particular on the links, but its explicit value is not known. We are going to show that if the links satisfy a Ricci positive lower bound, then we can compute $\Yl{X}$. In order to achieve this, we prove a lower bound for the spectrum of the Laplacian, by extending a well-known theorem by Lichenrowicz, and a Sobolev inequality, inspired by a result due to D. Bakry. Furthermore, we prove the existence of an Euclidean isoperimetric inequality on particular stratified space, with one stratum of codimension 2 and cone angle bigger than $2\pi$.
\end{abstract}

\section*{Introduction}
Stratifications of topological spaces have been introduced by H. Whitney \cite{Whitney} with the basic idea of partitioning a space in simpler elements, like manifolds, which are glued together in an appropriate way. Stratified spaces have been largely studied from a topological point of view (\cite{Thom}, \cite{Whitney}). They appear, for example, in treating the stability of smooth mappings between manifolds \cite{Mather}. Moreover, they give an appropriate setting to formulate Poincaré duality for intersection homology on singular spaces \cite{GoreskyMacPherson}. 

Analysis on stratified spaces is a quite recent field of investigation, since the 80s with the works by J. Cheeger on the spectral analysis on manifolds with conical singularities or corners \cite{Cheeger}. Another interesting approach is given by R. Merlose's study of pseudo-differential operators on singular spaces.

Furthermore, stratified spaces also arise in differential geometry, for example as quotients of compact Riemannian manifolds: the American football with cone angle $2\pi/p$, for an integer $p$, is a quotient of the sphere. They appears also as limits of smooth Riemannian manifolds. Later on in this paper, we give an example of a stratified space appearing as the limit of smooth complete surfaces: the cone of angle bigger than $2\pi$ over a circle. 


We are interested in studying stratified spaces by using classical tools from Riemannian geometry and geometric analysis. In particular, we consider the Yamabe problem on a compact stratified space $X$ endowed with an iterated edge metric $g$. Let us briefly recall some of the definitions we need in the following: we mainly refer to \cite{ACM12}, \cite{ACM14} and \cite{APLMP} for a more detailed discussion. 

\subsection*{The setting: Stratified spaces}
A compact stratified space $X$ is a metric space which admits a decomposition in a finite number of strata $X_j$ 
\begin{equation*}
X\supseteq X_{n-2}\supseteq X_{n-3} \supseteq \ldots \supseteq X_j  \supseteq \ldots
\end{equation*}
such that for each $j$, $X_j \setminus X_{j-1}$ is a smooth open manifold of dimension $j$, and $\Omega=X\setminus X_{n-2}$ is dense in $X$. By assumption, there is no codimension $1$ stratum.
We denote by $\Sigma$ the singular set of $X$, i.e. $\Sigma = X \setminus \Omega$. Each connected component of $X_j\setminus X_{j-1}$ has a tubular neighbourhood $\mathcal{U}_j$ which is the total space of a smooth cone bundle. Its fibre is $C(Z_j)$, where $Z_j$ is a stratified space and it is called the \emph{link} of (connected component of) the stratum. In the following we assume for simplicity that each stratum is connected, but clearly our discussion applies to each connected component. We follow \cite{ACM14} in identifying a neighbourhood of a point $x \in X_{j}\setminus X_{j-1}$ with the cone bundle: there exists a radius $\delta_x$, a neighbourhood $\mathcal{U}_x$ of $x$ and a homeomorphism 
\begin{equation*}
\varphi_x: B^{j}(\delta_x)\times C_{\delta_x}(Z_j) \rightarrow \mathcal{U}_x
\end{equation*} 
$\varphi_x$ is a diffeomorphism between $(B^{j}(\delta_x)\times C_{\delta_x}(Z_{j, reg})) \setminus (B^{j}(\delta_x)\times \{0\})$ (where $Z_{j,reg}$ is the regular part of $Z_j$) and the regular part of $\mathcal{U}_x$, i.e. $\mathcal{U}_x \cap \Omega$. 

The simplest examples of stratified spaces are manifolds with conical singularities or with simple edges: in this last case each link $Z_j$ is compact smooth manifold. 

We define iteratively the notion of \emph{depth}. If $X$ is a smooth compact manifold, it has depth equal to 0. If $Z$ is a stratified space of depth $k$ and $X$ is a stratified space with just one stratum having as link $Z$, then the depth of $X$ is $k+1$. In general, the depth of a stratified space is the maximal depth of the links of his strata, plus one. Depth allows us to apply iterative arguments on stratified spaces and in particular to define admissible metrics on them. An iterated edge metric $g$ on $X$ is a Riemannian metric on $\Omega$ which near to each stratum $X_j$ can be written as
\begin{equation*}
g= dy^2+dx^2+x^2k_j+E
\end{equation*}
where $dy^2$ is the Euclidean metric on $\R^{j}$, $k_j$ is an iterated edge metric on the link $Z_j$, and $E$ is a perturbation decaying as $x^{\gamma}$ for some $\gamma>0$.

It is possible to define the Sobolev space $\Sob{2}$ as the closure of Lipschitz functions on $X$ with the usual Sobolev norm; the set $C^1_0(\Omega)$ is dense in $W^{1,2}(X)$. Moreover, it is proved in \cite{ACM12} that the continuous Sobolev embedding of $W^{1,2}(X)$ in $L^{\p}(X)$ holds. 

\subsection*{The Yamabe Problem on Stratified Spaces}
A classical problem in geometric analysis was posed in the 60s by H. Yamabe: given a compact smooth manifold $(M^n,g)$, $n\geq 3$, is it possible to find a conformal metric $\tilde{g}$ such that
\begin{equation*}
\tilde{g}=u^{\frac{4}{n-2}}g
\end{equation*}
for some positive smooth function $u$, and the scalar curvature $S_{\tilde{g}}$ of $\tilde{g}$ is constant? 
The results by Trudinger \cite{Trudinger}, T.Aubin \cite{Aubin}, and finally R. Schoen \cite{Schoen84}, led to a positive answer. The solution of the problem strongly depends on finding a smooth positive function attaining the Yamabe constant:
\begin{equation*}
Y(M^n,[g])=\inf_{\underset{u>0}{u \in W^{1,2}(M)}}\frac{\int_M (|du|^2+a_nS_gu^2)dv_g}{\norm{u}_{\p}^2},
\quad a_n=\frac{(n-2)}{4(n-1)}.
\end{equation*}
In particular, T.Aubin proved that for any smooth compact manifold $Y(M^n,[g])$ is smaller or equal than the Yamabe constant $Y_n$ of the standard sphere $\s^n$. Furthermore, when the inequality is strict, then there exists a minimizer attaining $Y(M,[g])$. He also proved that for $n \geq 6$ and $(M^n,g)$ not locally conformally flat, the strict inequality holds. In the other cases, his works together with the proof of the positive mass theorem (\cite{SchoenYau79}, \cite{SchoenYau81}, \cite{SchoenYau88}), allowed Schoen to prove that either the inequality is strict, or $(M^n,[g])$ is conformal to the standard sphere.
 
In \cite{ACM12}, the authors considered the analogous problem on compact stratified spaces. Since the Sobolev embedding holds on $(X,g)$, it is possible to define the Yamabe constant of $X$ in the same way as in the smooth case. Nevertheless, this constant may not be finite if there is not any control the scalar curvature $S_g$: we assume that $S_g$ satisfies an integrability condition, i.e. $S_g \in L^{q}(X)$ for $q >n/2$. Moreover, \cite{ACM12} introduced another conformal invariant, the local Yamabe constant. They first define the Yamabe constant of an open ball (or set) of $X$: it will be equal to
\begin{equation*}
Y(B(p,r))=\inf\left\{\int_X (|du|^2+a_nS_gu^2)dv_g, u\in W^{1,2}_0(\Omega \cap B(p,r)), \norm{u}_{\p}=1\right\}.
\end{equation*}
Then the local Yamabe constant of $X$ is defined as follows:
\begin{equation*}
\Yl{X}= \inf_{p \in X} \lim_{r\rightarrow 0}Y(B(p,r)).
\end{equation*}
When $p$ belongs to the regular set, the limit as $r$ goes to zero of $Y(B(p,r))$ is clearly equal to $Y_n$, so that by definition $\Yl{X}\leq Y_n$. Furthermore, thanks to the local geometry of stratified spaces, the local Yamabe constant turns out to be equal to:
\begin{equation}
\label{lYc}
\Yl{X}=\min_{j=0 \ldots n} \inf_{p\in X_j\setminus X_{j-1}} \left\{ Y(\R^{j}\times C(Z^j), [dy^2+dx^2+x^2(k_j)_p])\right\}
\end{equation}
The local Yamabe constant plays the same role as $Y_n$ in the classical Yamabe problem. It is shown in \cite{ACM12} that if the Yamabe constant of a compact stratified space $X$ is strictly smaller than its local Yamabe constant 
\begin{equation*}
Y(X,[g])< \Yl{X}
\end{equation*}
and if the scalar curvature of $g$ satisfies $S_g$ is in $L^{q}(X)$ for some $q > n/2$, then there exists $u$ bounded on $X$ in $\Sob{2}$ which attains $Y(X,[g])$ and such that $\tilde{g}=u^{\frac{4}{n-2}}g$ has constant scalar curvature. 

The main issue is that the explicit value of the local Yamabe constant is not known, even in the simplest case of conical singularities, simple edges or only codimension 2 singular strata. We are going to show that we can compute it in a large class of stratified spaces under a geometric assumption on the links.

\subsection*{Main Results} 
We consider a stratified space $(X,g)$ with Einstein links $(Z_j,k_j)$ of dimension $d_j$: by this condition we mean that the metric $k_j$ is such that $Ric_{k_j}=(d_j-1)k_j$ on the regular set of $Z_j$. We observe that this hypothesis on the Ricci tensor is well justified in view of proving the existence of a Yamabe metric. It is showed in \cite{ACM12} that, if the scalar curvature of each link is equal to $d_j(d_j-1)$, then we have the integrability condition on the scalar curvature $S_g$. This is obviously the case under our assumption. 

We show the following:

\begin{theorem*}
Let $(Z^d,k)$ be a stratified space endowed with a metric $k$ such that $Ric_k=(d-1)k$ on the regular set of $Z$. Then the Yamabe constant of $\R^{n-d}\times C(Z^d)$ is either equal to $Y_n$ or to:
\begin{equation*}
\left(\frac{\vol_k(Z)}{\vol(\s^d)} \right)^{\frac 2n}Y_n.
\end{equation*}
\end{theorem*}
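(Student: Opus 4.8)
The plan is to reduce the computation of $Y(N,[g])$, where $N:=\R^{n-d-1}\times C(Z^{d})$ is the $n$-dimensional model cone of the statement with $g=dy^{2}+dr^{2}+r^{2}k$, to a Yamabe-type problem on a \emph{compact} Einstein stratified space via a conformal change. First I would record that the metric cone over $(Z^{d},k)$ is Ricci-flat precisely because $Ric_{k}=(d-1)k$, so $N$ — a product of flat space with a Ricci-flat cone — is scalar-flat on its regular set $\Omega$, and hence $Y(N,[g])=\inf\big\{\frac{\|du\|_{L^{2}(N)}^{2}}{\|u\|_{L^{\p}(N)}^{2}}:u\in W^{1,2}_{0}(\Omega\cap N),\ u\not\equiv 0\big\}$ is a pure Sobolev quotient. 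Next I would use the join identity $\R^{n-d-1}\times C(Z^{d})=C(W)$ with $W:=\s^{n-d-2}\ast Z^{d}$ equipped with the natural join metric $g_{W}$; since $\s^{n-d-2}$ is round and $Ric_{k}=(d-1)k$, the compact $(n-1)$-dimensional stratified space $(W,g_{W})$ satisfies $Ric_{g_{W}}=(n-2)g_{W}$. The Euclidean cone $C(W)$ is conformal, through the generalised stereographic factor $2(1+r^{2})^{-1}$, to the spherical suspension $\Sigma(W):=([0,\pi]\times W,\ dt^{2}+\sin^{2}t\,g_{W})$ with one pole deleted; that pole is a single point, hence of zero $2$-capacity, and since the Yamabe functional is conformally invariant on functions compactly supported in the regular part — while $C^{1}_{0}(\Omega)$ is dense in $W^{1,2}$ of the compact space $\Sigma(W)$ — I would conclude $Y(N,[g])=Y(\Sigma(W),[dt^{2}+\sin^{2}t\,g_{W}])$. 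Finally $Ric_{g_{W}}=(n-2)g_{W}$ makes $\Sigma(W)$ Einstein with $Ric=(n-1)$, so it has constant scalar curvature $n(n-1)$, and a one-dimensional integration gives $\vol(\Sigma(W))=(\vol_{k}(Z)/\vol(\s^{d}))\,\vol(\s^{n})$.

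For the upper bound I would use two competitors on $\Sigma(W)$. The constant function is admissible and, since $a_{n}n(n-1)=\tfrac{n(n-2)}{4}$, it gives $Y(\Sigma(W))\le\tfrac{n(n-2)}{4}\vol(\Sigma(W))^{2/n}=\big(\vol_{k}(Z)/\vol(\s^{d})\big)^{2/n}Y_{n}$, using $Y_{n}=\tfrac{n(n-2)}{4}\vol(\s^{n})^{2/n}$. On the other hand, concentrating the Euclidean extremals in a small ball around a point of the smooth, Ricci-flat regular part yields the standard bound $Y(\Sigma(W))\le Y_{n}$. Hence $Y(N,[g])\le\min\{Y_{n},\ (\vol_{k}(Z)/\vol(\s^{d}))^{2/n}Y_{n}\}$.

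For the matching lower bound, the decisive tool is the Sobolev inequality established earlier (the stratified analogue of Bakry's theorem, resting on the Lichnerowicz-type estimate $\lambda_{1}\ge n$), applied on $\Sigma(W)$ — legitimate since $\dim\Sigma(W)=n$ and $Ric\ge(n-1)$. When $\vol_{k}(Z)\le\vol(\s^{d})$, i.e. $\vol(\Sigma(W))\le\vol(\s^{n})$, that inequality holds with the round-sphere constants $\big(\frac{1}{V}\int|f|^{\p}\big)^{(n-2)/n}\le\frac{1}{V}\int f^{2}+\frac{4}{n(n-2)}\frac{1}{V}\int|\nabla f|^{2}$ with $V=\vol(\Sigma(W))$; rescaling turns this into $\|f\|_{\p}^{2}\le\frac{4}{n(n-2)}V^{-2/n}\big(\|\nabla f\|_{2}^{2}+\frac{n(n-2)}{4}\|f\|_{2}^{2}\big)$, i.e. $Y(\Sigma(W))\ge(\vol_{k}(Z)/\vol(\s^{d}))^{2/n}Y_{n}$, so with the previous paragraph $Y(N,[g])=(\vol_{k}(Z)/\vol(\s^{d}))^{2/n}Y_{n}$, a value that is $\le Y_{n}$. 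In the complementary regime $\vol_{k}(Z)>\vol(\s^{d})$ the constant is no longer a good competitor; here I would instead invoke the Euclidean isoperimetric inequality proved later in the paper for the relevant singular models (codimension-$2$ strata with cone angle $>2\pi$), which upgrades to the Sobolev inequality with the \emph{optimal Euclidean} first constant $Y_{n}^{-1}$, whose zeroth-order term is controlled precisely because $\vol(\Sigma(W))\ge\vol(\s^{n})$; this forces $Y(\Sigma(W))\ge Y_{n}$, hence $Y(N,[g])=Y_{n}$. The threshold $\vol_{k}(Z)=\vol(\s^{d})$ is exactly the source of the ``either/or'' in the statement.

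I expect the main obstacle to be this lower bound: proving the Bakry-type Sobolev inequality with \emph{sharp} constants on a genuinely singular space, and tracking the volume threshold $\vol(\s^{n})$, which — unlike the smooth case, where Bishop–Gromov makes it automatic — can be exceeded once codimension-$2$ strata with large cone angle are present. A secondary, more technical point is the conformal reduction: one must check that passing from $N$ to $\Sigma(W)$ does not see the deleted pole, which rests on the zero $W^{1,2}$-capacity of points and low-dimensional strata, and this is where the standing assumption that there is no codimension-$1$ stratum is used.
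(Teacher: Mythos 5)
Your treatment of the first regime is essentially the paper's argument: conformally transplant $\R^{n-d-1}\times C(Z)$ onto the spherical suspension of the join $S=\left[0,\tfrac{\pi}{2}\right]\times\s^{n-d-2}\times Z$, observe that the suspension metric is Einstein with $Ric=(n-1)$, get the lower bound from the Bakry-type Sobolev inequality (which rests on the Lichnerowicz estimate $\lambda_1\geq n$), and match it with the constant function as competitor; the volume computation $\vol(\Sigma(W))=(\vol_k(Z)/\vol(\s^d))\vol(\s^n)$ is exactly Lemma \ref{LYe}. The genuine gap is in how you split the two cases and in how you handle the second one. The paper's dichotomy is \emph{structural}, not volumetric: the Lichnerowicz/Bakry machinery applies exactly when the space is \emph{admissible}, i.e.\ when no codimension-$2$ stratum has link $\s^1_a$ with $a\geq 1$. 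The obstruction is not that the volume exceeds $\vol(\s^n)$; it is that the inductive proof of the spectral gap fails at its base case, since $\lambda_1(\s^1_a)=a^{-2}<1$ when $a\geq 1$. A volume threshold $\vol_k(Z)\lessgtr\vol(\s^d)$ does not detect this (and the Bakry inequality, where it holds, already carries the factor $V^{1-2/p}$ with the actual volume --- no comparison with $\vol(\s^n)$ is needed or used).

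Consequently your argument for the complementary regime does not go through. The paper never works on the compact suspension in that case: for $Z=\s^1_a$, $a\geq 1$, it stays on the non-compact, scalar-flat model $\R^{n-2}\times C(\s^1_a)$, where the Yamabe quotient is a pure Sobolev quotient with no zeroth-order term at all. It proves the \emph{Euclidean isoperimetric inequality} there by approximating the two-dimensional cone by Cartan--Hadamard surfaces $(\R^2,(\rho^2+\eps^2)^{a-1}(d\rho^2+\rho^2 d\theta^2))$, invoking Weil's theorem in dimension $2$, and then Ros's product theorem; Talenti's symmetrization then yields the sharp Sobolev constant and $Y=Y_n$. This chain is specific to $d=1$ (it leans on the two-dimensional Cartan--Hadamard theorem and on the fact that Euclidean balls avoiding the singular set sit inside the cone), and nothing in the paper supplies, for a general link with $\vol_k(Z)>\vol(\s^d)$, a Sobolev inequality on $\Sigma(W)$ "with the optimal Euclidean first constant whose zeroth-order term is controlled by large volume"; that step as you state it has no proof and I do not see how to make it one. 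To repair the proposal, replace the volume dichotomy by the paper's: either $Z$ is admissible, in which case your first argument gives $(\vol_k(Z)/\vol(\s^d))^{2/n}Y_n$, or $Z=\s^1_a$ with $a\geq 1$, in which case one must run the isoperimetric/Talenti argument directly on the flat cone.
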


This extends to the setting of stratified spaces an analogous result by J. Petean contained in \cite{Petean}, concerning the Yamabe constant of a cone over a smooth compact manifold $(M^n,g)$ with $Ric_g\geq (n-1)g$.

In order to prove this result, we need to distinguish two cases which depend on the strata of lowest codimension, i.e. equal to 2. The links of such strata must be circles $\s^1_a$ with radius $a$: remark that, when $a<1$, the cone $C(\s^1_a)$ is an Alexandrov space with positive curvature, in the sense of triangle comparison; when $a>1$ the cone is non-positively curved. We refer here to the definition of curvature bound given in the book by D. Burago, Y. Burago and S. Ivanov \cite{BBI}.

We first assume $(X,g)$ is compact stratified space which does not posses any codimension 2 strata with link $S^1_a$ for $a \geq 1$. If $(X,g)$ satisfies this condition, we call it an admissible stratified space. In this case, we are able to prove that a bound by below on the Ricci tensor leads to a bound by below for the spectrum of the Laplacian.

\begin{theorem*}
Let $(X,g)$ be an admissible stratified space such that $Ric_g \geq (n-1)g$. Then the first non-zero eigenvalue of the Laplacian $\Delta_g$ is greater or equal than $n$. 
\end{theorem*}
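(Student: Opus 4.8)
The strategy is to reproduce the Bochner--Lichnerowicz argument on the regular part $\Omega$ of $X$; the only two points that require care in the singular setting are a weighted regularity statement for the first eigenfunction near $\Sigma$ and a cut-off argument making all the integrations by parts legitimate, and it is exactly for these that admissibility is used. First I would note that, since $W^{1,2}(X)$ embeds continuously into $L^{\p}(X)$ and $\vol_g(X)<\infty$, the inclusion $W^{1,2}(X)\hookrightarrow L^2(X)$ is compact (a Rellich-type property of compact stratified spaces), so the Laplacian $\Delta_g$ (taken non-negative) has discrete spectrum. Let $\lambda=\lambda_1(\Delta_g)>0$ be the first non-zero eigenvalue and let $f$ be an associated eigenfunction with $\norm{f}_2=1$; by interior elliptic regularity $f$ is smooth on the open manifold $\Omega$ and solves $\Delta_g f=\lambda f$ there. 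The crucial --- and only genuinely new --- step is to prove the integrability up to the singular set,
\[
\int_\Omega |\nabla^2 f|^2\, dv_g<\infty,\qquad \int_\Omega |\nabla^2 f|^2\,|\nabla f|^2\,dv_g<\infty,
\]
so that in particular $f\in W^{2,2}(X)$ and $|\nabla f|^2\in W^{1,1}(X)$. I would establish this by induction on the depth of $X$, analysing $\Delta_g f=\lambda f$ in the model $\R^{j}\times C(Z_j)$ near a stratum $X_j$ and expanding $f$ along the eigenfunctions of the link Laplacian: a non-constant mode behaves in the conical variable $x$ like $x^{\mu}$ with $\mu(\mu+d_j-1)$ a link eigenvalue, and $x^{\mu}$ has square-integrable Hessian near $\{x=0\}$ as soon as $2\mu+d_j>3$ (the constant mode being always harmless). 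For a codimension $\geq 3$ stratum the link $Z_j$ is again an admissible stratified space with $Ric_{k_j}\geq(d_j-1)k_j$, so by the inductive hypothesis its first eigenvalue is $\geq d_j$, which forces $\mu\geq 1$ and hence square-integrability; for a codimension $2$ stratum the link is a circle $\s^1_a$ and the exponents are $\mu=k/a$, $k\in\N$, so the smallest non-trivial mode $x^{1/a}$ has square-integrable Hessian \emph{if and only if} $a<1$, which is precisely the admissibility hypothesis (and for $a\geq 1$ both this step and the conclusion of the theorem genuinely fail).

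Granting the regularity, I would fix the family of cut-off functions $\cutoff$, vanishing near $\Sigma$, equal to $1$ outside the $\varepsilon$-neighbourhood of $\Sigma$, and with $\norm{\nabla\cutoff}_2\to 0$ as $\varepsilon\to 0$; such functions exist because $\Sigma$ has codimension $\geq 2$ (with a logarithmic profile in codimension $2$, whence the slow decay, which is why the \emph{uniform} bound $|\nabla^2 f|\,|\nabla f|\in L^2(\Omega)$ is wanted). On $\Omega$ the Bochner formula reads
\[
\tfrac12\,\Delta_g|\nabla f|^2 = \langle\nabla\Delta_g f,\nabla f\rangle - |\nabla^2 f|^2 - Ric_g(\nabla f,\nabla f);
\]
multiplying by $\cutoff$ and integrating over $\Omega$, the left-hand side becomes $\tfrac12\int_\Omega\langle\nabla\cutoff,\nabla|\nabla f|^2\rangle$, bounded in absolute value by $\norm{\nabla\cutoff}_2\,\norm{|\nabla^2 f|\,|\nabla f|}_2$ and hence tending to $0$, while the three terms on the right converge by dominated convergence to their integrals over $\Omega$. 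Using $\nabla\Delta_g f=\lambda\nabla f$ and the identity $\int_\Omega|\nabla f|^2\,dv_g=\lambda\int_X f^2\,dv_g=\lambda$ (test the eigenvalue equation against $f$), we arrive at
\[
\int_\Omega |\nabla^2 f|^2\,dv_g + \int_\Omega Ric_g(\nabla f,\nabla f)\,dv_g = \lambda^2 .
\]
Finally, the pointwise inequality $|\nabla^2 f|^2\geq \tfrac1n(\Delta_g f)^2=\tfrac{\lambda^2}{n}f^2$ integrates to $\int_\Omega|\nabla^2 f|^2\,dv_g\geq \tfrac{\lambda^2}{n}$, and $Ric_g\geq (n-1)g$ gives $\int_\Omega Ric_g(\nabla f,\nabla f)\,dv_g\geq (n-1)\lambda$; substituting into the identity above yields $\tfrac{\lambda^2}{n}+(n-1)\lambda\leq\lambda^2$, that is $\lambda\,\tfrac{n-1}{n}\geq n-1$, and therefore $\lambda\geq n$.

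The hard part will be the weighted regularity of the first paragraph: what needs real work is not the smoothness of $f$ on $\Omega$ but the square-integrability of its Hessian, and of the cross term $|\nabla^2 f|\,|\nabla f|$, up to $\Sigma$ --- this rests on the separation-of-variables analysis of $\Delta_g f=\lambda f$ on the links, on the induction over the depth, and, decisively, on the admissibility assumption, which removes the slowly-vanishing eigenmodes near the codimension $2$ strata. Once that integrability is available, the remainder is the classical Lichnerowicz computation, the cut-offs serving only to discard boundary contributions at $\Sigma$ that the integrability already shows to be negligible.
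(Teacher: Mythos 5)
Your overall architecture --- induction on the singular structure to control the eigenfunction near $\Sigma$, then the classical Bochner--Lichnerowicz identity run with cut-offs, then $|\nabla^2 f|^2\geq(\Delta_g f)^2/n$ and $Ric_g\geq(n-1)g$ --- is exactly the paper's, and your final algebra ($\lambda^2\geq\lambda^2/n+(n-1)\lambda$, hence $\lambda\geq n$) is the same computation in a slightly different arrangement. The place where you diverge, and where the gap sits, is the step you yourself flag as ``the hard part'': the integrability of $|\nabla^2 f|^2$ and of $|\nabla^2 f|^2|\nabla f|^2$ up to $\Sigma$. You propose to get this by expanding the eigenfunction in link eigenmodes near each stratum and reading off indicial exponents $\mu$ with $\mu(\mu+d_j-1)=\lambda_{\mathrm{link}}$. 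The exponent arithmetic you do is correct (including the characterization $a<1$ in codimension $2$), but the assertion that an eigenfunction on a compact stratified space actually admits such an expansion with the claimed leading behaviour is precisely the content of the hard regularity theory: for links of positive depth the link Laplacian is itself a singular operator, the iterated edge metric carries a perturbation term $E=O(x^\gamma)$ so the exact model computation does not literally apply, and one must rule out non-mode behaviour and justify term-by-term differentiation of the expansion. As written, this step is a heuristic, not a proof, and it is the entire difficulty of the theorem.

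For comparison, the paper does not attempt a mode expansion. It (a) uses the Bochner formula only to derive the differential inequality $\Delta_g|d\varphi|\leq c|d\varphi|$ on $\Omega$ (Lemma \ref{L1}); (b) combines a Moser iteration on balls $B(x,r)$ with $r<d_g(x,\Sigma)/2$ (Lemma \ref{L2}) with the a priori estimate $r^{-n}\int_{B(x,r)}|du|^2\leq C|\ln r|$ imported from \cite{ACM14} (Proposition \ref{1.3APP}), whose hypothesis $\lambda_1(S_x)\geq n-1$ is exactly what the induction supplies, to obtain $\norm{d\varphi}_{L^\infty(X\setminus\Sigma^\eps)}\leq C\sqrt{|\ln\eps|}$; and (c) in codimension $2$, where the harmonic cut-off only has Dirichlet energy $O(|\ln\eps|^{-1})$, runs an extra bootstrap: first the bounded left-hand side of the integrated Bochner identity forces $|\nabla d\varphi|\in L^2(X)$, then a weak maximum-principle/Moser argument upgrades $|d\varphi|$ to $L^\infty(X)$, after which the cut-off terms genuinely vanish. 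If you want to complete your proof without reproving the edge regularity theory, you should replace your separation-of-variables paragraph by this chain (or cite Proposition 4.1 of \cite{ACM14} explicitly); note also that your stronger requirement $|\nabla^2 f|\,|\nabla f|\in L^2$ is more than the paper ever establishes --- it avoids needing it by keeping $\Delta_g\cutoff$ rather than integrating by parts onto $\nabla|\nabla f|^2$, and by using the $L^\infty$ bound on $|d\varphi|$ in codimension $2$.
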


This is a generalization of Lichnerowicz theorem for smooth compact manifolds. Observe that K. Bacher and K-T. Sturm prove in \cite{BacherSturm} a version of Lichnerowicz theorem for spherical cones over a smooth compact manifold $(M^n,g)$, $Ric_g \geq (n-1)g$, by using a curvature-dimension condition in the sense of Sturm and Lott-Villani. Our theorem applies in general to cones over any admissible stratified space.

This spectral gap allows us to extend a result by D. Bakry contained in \cite{Bakry}: we prove the existence of a Sobolev inequality with explicit constants, and this gives in turns a lower bound for $Y(X,[g])$. Furthermore the lower bound is attained in the Einstein case, so that we are able to compute the Yamabe constant of an Einstein admissible stratified space.\\
In order to prove the previous results, we need to study the regularity of a solution to the Schrödinger equation $\Delta_gu=Vu$ for $V \in L^{\infty}(X)$ and of its gradient. A theorem contained in \cite{ACM14} states that such regularity depends on the spectral geometry of the links: more precisely, on the first eigenvalue of the Laplacian on the links. Combining this with our singular version of Lichnerowicz theorem, we are able to control the $L^{\infty}$-norm of the gradient $du$ away from a tubular neighbourhood of the singular set $\Sigma$. 

In the case of a link being $\s^1_a$ with $a \geq 1$, and then cone of angle bigger that $2\pi$, we cannot apply Lichnerowicz theorem, which does not even hold on $S^1_a$. We will study the isoperimetric profiles of $\R^{n-2}\times C(\s^1_a)$. Firstly, it is easy to prove by direct computations that the cone $C(\s^1_a)$ endowed with the metric $dr^2+(ar)^2d\theta^2$ can be found as the limit of Cartan-Hadamard surfaces, i.e. $\R^2$ endowed with a metric of negative sectional curvature. This gives an example of a stratified space arising as a limit of smooth manifolds. Moreover we show that $\R^{n-2}\times C(\s^1_a)$ has the same isoperimetric profiles as the Euclidean space $\R^n$, and that its isoperimetric domains are the Euclidean balls not intersecting the singular set $\R^{n-2}\times \{0\}$. This allows us to apply a classical argument by G.Talenti, then to find an optimal Sobolev inequality on $\R^{n-2}\times C(\s^1_a)$ and finally deduce that its Yamabe constant is equal to the one of the standard sphere $Y_n$. 

Knowing the local Yamabe constant $\Yl{X}$ opens further questions. We would like to know under which hypothesis the strict inequality $Y(X,[g]) < \Yl{X}$ holds, or what happens in the case of equality. As we recalled above, in the compact smooth case we know that for dimension $n \geq 6$ and if $(M^n,g)$ is not locally conformally flat, the the strict inequality $Y(M^n,[g])< Y_n$ follows from a local computation by means of test functions (see \cite{Aubin}). It may be possible to reproduce the same kind of technique on stratified spaces. Furthermore, in dimension $n=3,4,5$ and for locally conformally flat manifolds, the positive mass theorem holds. At present, it is not known whether an equivalent theorem may be proven in the setting of compact stratified spaces. \newline

\textbf{Acknowledgements:} I would like to thank Benoît Kloeckner, who suggested to use Ros Product Theorem, in the occasion of his geometry seminar at the Laboratoire de Mathématiques Jean Leray in Nantes. 

\section{Some technical tools}

We start by recalling some useful concepts about the geometry of a compact stratified space $(X^n,g)$. 

First of all, we cannot define the usual tangent space at any point of $x \in X$, in particular if $x$ belongs to the singular set $\Sigma$. We can nevertheless consider the \emph{tangent cone} at $x$: assume that $x \in X_{j}$ and take the Gromov-Hausdorff limit of the pointed metric spaces $(X, \eps^{-2}g, x)$ as $\eps$ tends to zero. We follow \cite{ACM14} in order to state that this limit is unique at any point and it is a cone $(C(S_x), dr^2+r^2h_x)$, where $S_x$ is the $j$-fold spherical suspension of the link $Z_j$. More precisely, if $\s^{j-1}$ is the canonical sphere of dimension $(j-1)$ we have:
\begin{align}
\label{tgsphere} S_x &= \left[O, \frac{\pi}{2}\right]\times \s^{j-1} \times Z_j \\
\label{h_met} h_x &= d\varphi^2+\sin^2\varphi g_{\s^{j-1}}+\cos^2\varphi k_j
\end{align}
We refer to $S_x$ as the \emph{tangent sphere} at $x$. Observe that $S_x$ is a stratified space of dimension $(n-1)$. 

The tangent cone $(C(S_x), dr^2+r^2h_x)$ is in fact isometric to the product $\R^j \times C(Z_j)$ with the metric $dy^2+d\tau^2+\tau^2k_j$. We can rewrite the Euclidean metric $dy^2$ in polar coordinates
\begin{equation*}
dy^2=d\rho^2+\rho^2g_{\s^{j-1}}.
\end{equation*}
where $g_{\s^{j-1}}$ is the canonical metric on the sphere of dimension $j-1$. Then with the change of coordinates $\tau= r\sin(\varphi)$, $\rho=r\cos(\varphi)$ we get $g= dr^2+r^2 h$ where $h$ coincides with the metric on the tangent sphere. This change of variables gives us the isometry we were looking for. 

Observe that, as a consequence, the local Yamabe constant of $(X,g)$  given in $\eqref{lYc}$ is also equal to:
\begin{equation*}
\Yl{X}= \inf_{x \in X} \{ Y(C(S_x), [dr^2+r^2h_x]) \}.
\end{equation*}

\subsection{Bounds by below for the Ricci tensor}

Let $(X,g)$ be a stratified space with strata $X_j$, $j=1, \ldots N$ and links $(Z_j,k_j)$. We give some result about the relations between Ricci bounds for the metric $g$, $k_j$ and the metrics $h_x$ on the tangent spheres $S_x$. 

We observe that through this paper a Ricci bound means that we have a classical Ricci bound on the regular set $\Omega$, where the metric $g$ is a smooth Riemannian metric and the Ricci tensor $Ric_g$ is defined in the usual way. There exist other approaches based on generalized lower Ricci bounds for metric measure spaces, as introduced by Sturm and Lott-Villani: see for example \cite{BacherSturm} for recent developpements in the subject. 

\begin{lemma} 
\label{RicciBound}
Let $X$ be a compact stratified space endowed with a metric $g$ such that the Ricci tensor is bounded by below, i.e. there exists a constant $c \in \R$ such that:
\begin{equation*}
Ric_g\geq cg \quad \mbox{ on } \Omega
\end{equation*}
Then for each point $x \in X$ the tangent cone has non-negative Ricci tensor. Furthermore, on each link $(Z_j, k_j)$ of dimension $d_j$ we have $Ric_{d_j} \geq (d_j-1)k_j$.
\end{lemma}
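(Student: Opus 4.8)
\emph{Proof strategy.} The plan is to reduce both assertions to the Ricci curvature formula for a metric cone, and then to let the conical parameter go to $0$ so that the lower bound $Ric_g\geq cg$ forces the cross-section of the cone to be Einstein with the right constant. Fix a stratum $X_j$, with link $(Z_j,k_j)$ and $d_j=\dim Z_j$, and near $X_j$ write $g=\bar g+E$ with
\begin{equation*}
\bar g=dy^2+dx^2+x^2k_j,\qquad E=O(x^\gamma)\ \ (\gamma>0),
\end{equation*}
where, as in the definition of an iterated edge metric and following \cite{ACM14}, $E$ together with its covariant derivatives up to order two (in the edge sense, with respect to $\bar g$) is $O(x^\gamma)$. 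Up to the change of variables recalled above, $\bar g$ is exactly the tangent cone metric at a point of $X_j$, and on the regular set it is a warped product over $(Z_j,k_j)$. A standard warped-product computation then gives, for vector fields $V,W$ tangent to the $Z_j$ factor,
\begin{equation*}
Ric_{\bar g}(V,W)=Ric_{k_j}(V,W)-(d_j-1)\,k_j(V,W),
\end{equation*}
independently of $x$ and of the $\R^j$ variable, while $Ric_{\bar g}(\partial_x,\cdot)=0$ and $Ric_{\bar g}$ vanishes on the flat factor $\R^j$. In particular $Ric_{\bar g}\geq 0$ on the regular part of $\R^j\times C(Z_j)$ if and only if $Ric_{k_j}\geq(d_j-1)k_j$ on the regular set of $Z_j$.

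To obtain the inequality on the link, I would evaluate $Ric_g\geq cg$ on a fixed vector field $V$ tangent to $Z_j$, at a regular point lying at conical distance $x$ from $X_j$, and let $x\to 0$. The edge-type decay of $E$ and of its derivatives makes $(Ric_g-Ric_{\bar g})(V,V)=O(x^\gamma)\to 0$, while $g(V,V)=x^2k_j(V,V)+O(x^{2+\gamma})\to 0$; since $Ric_{\bar g}(V,V)=Ric_{k_j}(V,V)-(d_j-1)k_j(V,V)$ does not depend on $x$, passing to the limit gives $Ric_{k_j}(V,V)\geq(d_j-1)k_j(V,V)$, which is the second assertion. The first assertion follows at once: the tangent cone at any point of $X_j$ is isometric to $\R^j\times C(Z_j)$ with the model metric $\bar g$, for which the displayed formula gives $Ric_{\bar g}\geq 0$ on the regular part; at a regular point of $X$ the tangent cone is flat $\R^n$ and there is nothing to prove. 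One can also argue directly by rescaling: $Ric_{\eps^{-2}g}=Ric_g\geq c\eps^2\,(\eps^{-2}g)$, and since the pointed rescaled spaces converge to the tangent cone, smoothly on compact subsets of the regular part, the lower bound with the vanishing constant $c\eps^2$ passes to the limit.

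I expect the only genuine difficulty to be the control of the perturbation $E$: one must know that the decay built into the notion of iterated edge metric is strong enough that $Ric_g-Ric_{\bar g}$, evaluated on vector fields tangent to $Z_j$, is $O(x^\gamma)$ and therefore negligible as $x\to 0$, equivalently that the rescaled metrics converge to the model cone with two derivatives on the regular part. For this I would rely on the asymptotics of iterated edge metrics established in \cite{ACM14} and used in \cite{APLMP}; the remaining ingredients are only the warped-product Ricci identity and the passage to the limit. Since each link $Z_j$ is itself a compact stratified space, the bound $Ric_{k_j}\geq(d_j-1)k_j$ is understood on the regular set of $Z_j$, exactly as stated, and in this form it is amenable to iteration on the depth of $X$.
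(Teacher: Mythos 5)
Your argument is correct and rests on the same two ingredients as the paper's proof: passing the lower bound $Ric_g\geq cg$ to the blow-up limit (using the decay of the perturbation $E$, i.e.\ the $C^\infty$ convergence of the rescaled metrics away from the singular set) and the warped-product Ricci identity $Ric_{\bar g}(V,W)=Ric_{k_j}(V,W)-(d_j-1)k_j(V,W)$. The only difference is the order of deduction — you extract the link bound directly from the asymptotics and then infer non-negativity of the tangent cone, whereas the paper first obtains $Ric\geq 0$ on the tangent cone and then reads off the bounds on $h_x$ and $k_j$ from the (doubly) warped-product formulas — which is immaterial.
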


\begin{proof}
As we stated above, the tangent cone at $x \in X_j$ is the Gromov-Hausdorff limit of $(X, \eps^{-2}g, x)$ as $\eps$ goes to zero. Furthermore, the corvergence is uniform in $C^{\infty}$ away from the singular set $\Sigma$, so that as a consequence we have:
\begin{equation*}
Ric_{g_{\eps}}=Ric_{g}\geq c g = \eps^2 cg_{\eps} \quad \mbox{on } \Omega.
\end{equation*}
Then when we pass to the limit  as $\eps$ goes to zero the Ricci tensor of the limit metric $dr^2+r^2h_x$ must be non-negative. Now it is not difficult to see that this implies:
\begin{equation*}
Ric_{h_x} \geq (n-2)h_x
\end{equation*}
Recall that the metric $h_x$ has the form $\eqref{h_met}$, and then $k_j$ on the link $Z_j$ of the stratum $X_j$ must satisfy $Ric_{k_j}\geq (d_j-1)k_j$. For both of the last two bounds, we refer to the formulas for the Ricci tensor of warped products and doubly warped products contained in Chapter 3 of \cite{Petersen}.
\end{proof}

Viceversa, we can assume that we have a Ricci bound on the links:

\begin{lemma}
\label{ConfEquiv}
Let $(Z^d,k)$ be a compact stratified space such that $Ric_k\geq (d-1)k$. Consider the metric $g=dy^2+d\tau^2+\tau^2k$ on $(\R^{n-d-1}\times C(Z))$ and let $S$ be the $(n-d-2)$-fold spherical suspension of $Z$
\begin{equation*}
S= \left[0,\frac{\pi}{2}\right]\times \s^{n-d-2} \times Z.
\end{equation*}
endowed with the metric:
\begin{equation}
h=d\varphi^2+\sin^2\varphi g_{\s^{n-d-2}}+\cos^2\varphi k
\end{equation}
Then the cone metric $dr^2+r^2h$ on $C(S)=(0, \infty) \times S$ has non-negative Ricci tensor. Moreover, $(\R^{n-d-1}\times C(Z),g)$ is conformally equivalent to $C(S)=(0, \pi) \times S$ endowed with a metric $g_c$ such that $Ric_{g_c}\geq(n-1)g_c$. 
\end{lemma}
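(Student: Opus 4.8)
The plan is to reduce everything to elementary (doubly) warped--product curvature identities, exactly as in Petean's treatment of the smooth case \cite{Petean}; all curvature inequalities are of course understood on the regular sets, where the metrics involved are genuinely smooth, and the identities are those collected in Chapter~3 of \cite{Petersen}.

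First I would observe that the metric cone $(C(S),\, dr^2+r^2h)$ is in fact isometric to $(\R^{n-d-1}\times C(Z),\, g)$. Writing the Euclidean factor in polar coordinates, $dy^2 = d\rho^2+\rho^2 g_{\s^{n-d-2}}$, and performing the change of variables $\rho = r\sin\varphi$, $\tau = r\cos\varphi$ --- the same substitution used in Section~1 to identify tangent cones --- one computes
\begin{equation*}
dr^2+r^2h = d\rho^2+\rho^2 g_{\s^{n-d-2}}+d\tau^2+\tau^2k = dy^2+d\tau^2+\tau^2k = g .
\end{equation*}
Since $(\R^{n-d-1},dy^2)$ is flat and the cone $(C(Z),\,d\tau^2+\tau^2k)$ has non-negative Ricci tensor precisely because $Ric_k\geq(d-1)k$ (the same warped--product computation as in the converse direction of Lemma~\ref{RicciBound}), the Riemannian product, hence $dr^2+r^2h$, has $Ric\geq 0$. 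This proves the first assertion; reading the cone formula for $dr^2+r^2h$ over the $(n-1)$--dimensional space $S$ in the other direction --- or, equivalently, computing directly with the doubly warped structure of $h$ --- yields $Ric_h\geq(n-2)h$ on the regular part of $S$.

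For the second assertion I would pass from the metric cone to the associated spherical suspension by a conformal change. On $(0,\infty)$ set $r=\cot(s/2)$ with $s\in(0,\pi)$; a one--dimensional computation gives
\begin{equation*}
dr^2+r^2h = \left(\frac{1+r^2}{2}\right)^{2}\bigl(ds^2+\sin^2 s\, h\bigr),
\end{equation*}
so that $g_c := ds^2+\sin^2 s\, h$ on $C(S)=(0,\pi)\times S$ is conformal to $dr^2+r^2h$, with conformal factor $\bigl(\tfrac{2}{1+r^2}\bigr)^2 = \bigl(\tfrac{2}{1+|y|^2+\tau^2}\bigr)^2$ once pulled back to $\R^{n-d-1}\times C(Z)$ by the isometry of the first step (a stereographic--type factor, as in the conformal equivalence between $\R^n$ and $\s^n\setminus\{\mathrm{pt}\}$). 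Finally, the warped--product formula for the suspension $ds^2+\sin^2 s\, h$ over $(S,h)$ shows that $Ric_h\geq(n-2)h$ is exactly equivalent to $Ric_{g_c}\geq(n-1)g_c$ on the regular set; combined with the isometry of the first step this gives the claimed conformal equivalence of $(\R^{n-d-1}\times C(Z),g)$ with $(C(S),g_c)$.

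The main obstacle is not conceptual but bookkeeping: getting the constants right in the doubly warped--product Ricci identities, so that the Einstein constant $n-d-3$ of the sphere factor $\s^{n-d-2}$ combines with the bound $Ric_k\geq(d-1)k$ to produce precisely $(n-2)$ (and no less), and checking that the degeneration of the warping functions $\sin\varphi,\cos\varphi$ at $\varphi=0,\pi/2$, i.e. along the lower--dimensional strata of $S$, is harmless --- which it is, since the curvature inequalities are imposed only on the regular set where all metrics are smooth. Apart from this stratified bookkeeping, already handled in Lemma~\ref{RicciBound}, the argument is entirely parallel to \cite{Petean}.
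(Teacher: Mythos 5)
Your proof is correct and follows essentially the same route as the paper: the isometry of $(\R^{n-d-1}\times C(Z),g)$ with the cone over $S$, the bound $Ric_h\geq(n-2)h$ (which the paper reads off the doubly warped product formulas in \cite{Petersen} and you obtain equivalently from the product structure $\R^{n-d-1}\times C(Z)$), and the conformal passage to $g_c=dt^2+\sin^2(t)h$ with the equivalence $Ric_h\geq(n-2)h\iff Ric_{g_c}\geq(n-1)g_c$. Your explicit substitution $r=\cot(s/2)$ merely compresses into one step the paper's two-step conformal chain through the cylinder $\R\times S$; the content is identical.
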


\begin{proof}
By recalling again \cite{Petersen}, Chapter 3, page 71, $Ric_k \geq (d-1)k$ implies that the metric $h$ defined in $\eqref{h_met}$ has Ricci tensor such that $Ric_h \geq (n-2)h$. As a consequence the Ricci tensor of $dr^2+r^2h$ is non-negative. Furthermore, we know that $(\R^{n-d-1}\times C(Z),g)$ is isometric to $(C(S), dr^2+r^2h)$, where $C(S)=(0, +\infty) \times S$. Now $dr^2+r^2 h$ is conformal to the product metric on $\R\times S$, which in turns is conformal to the metric:
\begin{equation*}
g_c=dt^2+\sin^2(t)h 
\end{equation*}
on $(0,\pi) \times S$. For this metric $Ric_{g_c}\geq (n-1)g_c$ holds: this concludes the proof. 
\end{proof}
adapt their argument
In the following we are going to give a bound from below for the Yamabe constant of a compact stratified space $(X^n,g)$ with Ricci tensor bounded by below $Ric_g \geq (n-1)g$. Thanks to the previous Lemma, we know that the tangent cones are conformal to a compact stratified space with this hypothesis on the Ricci tensor. As a consequence, we will also have a result about the \emph{local} Yamabe constant of a stratified spaces with links $(Z^d,k)$ such that $Ric_k\geq (d-1)k$.

\subsection{Regularity}

We recall that on a compact stratified space $(X,g)$ we can define $W^{1,p}(X)$ as the completion of the Lipschitz functions with the norm of $W^{1,p}(X)$. Following \cite{ACM12}, when $p$ is smaller or equal than the codimension $m$ of the singular set $\Sigma$, we assume that $C^1_0(\Omega)$ is dense in $W^{1,p}(X)$. From Proposition 2.2 in \cite{ACM12} we also know that the Sobolev inequality holds on $(X,g)$, i.e. there exist positive constants $A,B$ such that for any $u \in \Sob{2}$:
\begin{equation}
\label{Sob1}
\norm{u}_{\p}^2 \leq A\norm{u}^2_2+B\norm{du}_2^2.
\end{equation}
We are going to study the regularity of the gradient $du$ of a function $u \in \Sob{2}$ solving a Schrödinger equation of the form $\Delta_g u=Vu$, for $V \in L^{\infty}(X)$. We show that we can control the $L^{\infty}$-norm of the gradient $|du|$ on $\Omega$ depending on the distance from the singular set. \\
In order to do this, we need two hypothesis: the first one is that the Ricci tensor is bounded by below. The second one is a condition on the first eigenvalue $\lambda_1(S_x)$ of the Laplacian on the tangent spheres $S_x$ (or equivalently, on the links $Z_j$). 

\begin{prop}
\label{reg}
Let $(X^n,g)$ be a compact stratified space such that $Ric_g\geq(n-1)g$. Assume that for any $x \in X$ we have $\lambda_1(S_x) \geq (n-1)$. Let $u \in W^{1,2}(X)$ be a solution of:
\begin{equation}
\label{eqB}
\Delta_g u=V u 
\end{equation}
for $V \in L^{\infty}(X)$. Assume that there exists a constant $c$ such that $\Delta_g|du|\leq c|du|$. Then for any $\varepsilon>0$ we have:
\begin{equation}
\label{TubEst}
\norm{du}_{L^{\infty}(X\setminus \Sigma^{\varepsilon})} \leq C \sqrt{|\ln(\varepsilon)|}
\end{equation}
where $\tub{\eps}$ is an $\eps$-tubular neighbourhood of $\Sigma$ and $C$ is a positive constant not depending on $\eps$.
\end{prop}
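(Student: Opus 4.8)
The plan is to regard $w:=|du|$ as a non-negative weak subsolution of a linear Schr\"odinger inequality on $\Omega$, to run a Moser iteration on small metric balls keeping the dependence on the distance to $\Sigma$ explicit, and to feed into it a sharp energy estimate near the singular set; that last estimate is the place where the spectral hypothesis $\lambda_1(S_x)\ge n-1$ is really used.

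\emph{Step 1: $u\in L^{\infty}(X)$.} Since $V\in L^{\infty}(X)$ and the Sobolev inequality \eqref{Sob1} holds, the Moser iteration of \cite{ACM12} applied to $\Delta_g u=Vu$ gives $u\in L^{\infty}(X)$; in particular $w\in L^2(X)$ with $\norm{w}_2^2=\norm{du}_2^2\le\norm{u}_{W^{1,2}(X)}^2$, and $u$ solves $\Delta_g u=f$ with $f:=Vu\in L^{\infty}(X)$.

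\emph{Step 2: interior Moser estimate with explicit scaling.} Fix $x\in\Omega$ with $s:=d_g(x,\Sigma)$ small and work on $B(x,s/2)\subset\{\,s/2\le d_g(\cdot,\Sigma)\le 3s/2\,\}$. After rescaling by $s$, the ball $(B(x,s/2),s^{-2}g)$ is, up to the decaying perturbation $E=O(x^{\gamma})$, a fixed-size ball inside the tangent cone $C(S_{x_0})$ at the nearest singular point $x_0$ (cf. \eqref{tgsphere}--\eqref{h_met} and Lemma \ref{RicciBound}); the tangent cones form a controlled family, so the Sobolev inequality on $B(x,s/2)$ holds with a constant comparable to $s^2$, uniformly in $x$. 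Since $\Delta_g w\le cw$ makes $w$ a subsolution, the supremum half of Moser iteration yields
\begin{equation*}
w(x)^2\ \le\ \sup_{B(x,s/4)}w^2\ \le\ \frac{C}{\vol_g B(x,s/2)}\int_{B(x,s/2)}w^2\,dv_g\ \le\ \frac{C}{s^{n}}\int_{B(x,s/2)}|du|^2\,dv_g ,
\end{equation*}
with $C$ independent of $x$ and $s$. Note that only the differential inequality $\Delta_g w\le cw$, not an equation for $w$, is needed here.

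\emph{Step 3: sharp energy estimate near $\Sigma$, and conclusion.} The heart of the matter is the bound
\begin{equation*}
\int_{B(x,s/2)}|du|^2\,dv_g\ \le\ C\, s^{n}\,|\ln s| .
\end{equation*}
To get it I would invoke the regularity theory of \cite{ACM14}: because $\lambda_1(S_x)\ge n-1$, the first non-constant indicial exponent of the model conical operator satisfies $\gamma\ge 1$ (from $\gamma(\gamma+n-2)=\lambda_1(S_x)$), so a solution of $\Delta_g u=f$, $f\in L^{\infty}$, is Lipschitz up to $\Sigma$ except for a logarithmic loss at the borderline exponent $\gamma=1$; concretely one obtains $\operatorname{osc}_{B(x,s)}u\le C s\sqrt{|\ln s|}$. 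Inserting $v:=u-u(x)$ into the Caccioppoli identity $\int\phi^2|du|^2=\int f v\phi^2-2\int v\phi\langle du,d\phi\rangle$, with $\phi$ a cut-off equal to $1$ on $B(x,s/2)$, supported in $B(x,s)$ and with $\int|d\phi|^2\lesssim s^{n-2}$, then gives the displayed bound. Combined with Step 2 this gives $w(x)^2\le C|\ln s|$ for every $x$ with $d_g(x,\Sigma)=s$, hence $\norm{du}_{L^{\infty}(X\setminus\Sigma^{\varepsilon})}\le C\sqrt{|\ln\varepsilon|}$.

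\emph{Main obstacle.} The delicate step is the sharp regularity/energy estimate of Step 3: one must make quantitative, on the actual metric $g$ rather than on the exact cone, the assertion that $\lambda_1(S_x)\ge n-1$ is precisely the threshold beyond which $du$ fails to be bounded only by the factor $\sqrt{|\ln\varepsilon|}$. This requires the weighted Schauder/Moser theory of \cite{ACM14} for the model operator on $C(S_x)$, control of the error produced by the perturbation $E$, and --- since the links $Z_j$ may themselves be stratified --- an induction on the depth, together with careful bookkeeping of the borderline exponent $\gamma=1$, which is exactly what turns a would-be bounded gradient into the estimate \eqref{TubEst}.
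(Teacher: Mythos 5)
Your proposal follows essentially the same route as the paper: an interior Moser iteration for the subsolution $|du|$ on balls of radius comparable to $d_g(x,\Sigma)$ (the paper's Lemma \ref{L2}), combined with the logarithmic energy bound $\frac{1}{r^n}\int_{B(x,r)}|du|^2\,dv_g\le C|\ln r|$, which the paper simply quotes from \cite{ACM14} (Proposition \ref{1.3APP}) rather than re-deriving via indicial roots and a Caccioppoli argument as you sketch. The structure and the way the hypothesis $\lambda_1(S_x)\ge n-1$ enters are the same, so the proposal is correct.
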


\begin{rem}
\label{lp}
Since $|du|$ satisfies the estimate $\eqref{TubEst}$, it is in $L^p(X)$ for any $p \in [1, +\infty)$.  In fact, if we denote by $m$ the codimension of the singular set $\Sigma$, which is greater or equal to two, we have:
\begin{align*}
\int_X |du|^p dv_g & = \int_{X \setminus \tub{\eps}} |du|^p dv_g + \int_{\tub{\eps}} |du|^p dv_g \\
& \leq |\ln(\eps)|^{\frac p2}\vol_g(X)+ C^p\int_0^{\eps}\left(\int_{\partial\tub{t}}|\ln(t)|^{\frac{p}{2}} d\sigma_g \right) dt \\
& \leq |\ln(\eps)|^{\frac p2}\vol_g(X)+ C_1\int_0^{\eps} t^{m-1} |\ln(t)|^{\frac{p}{2}} dt.
\end{align*}
Where we used that the volume of boundary of the tubular neighbourhood of size $t$ is bounded by a constant times the $(m-1)$ power of $t$. The last integral is clearly finite, therefore $|du|\in L^{p}(X)$.
\end{rem}

The proof of Proposition $\ref{reg}$ consists of two steps: we state in the following the results we need to obtain it. 

\begin{lemma}[Moser iteration technique]
\label{L2}
Let $(X,g)$ be a compact stratified space and $f \in L^2(X)$ such that the inequality $\Delta_g f \leq c f$ holds on $\Omega$ for some positive constant $c$. Then there exists a constant $c_1$, only depending on the dimension $n$, such that for any $x \in \Omega$ and $0<r< d_g(x,\Sigma)/2$ we have:
\begin{equation*}
\norm{f}_{L^{\infty}(B(x,r/2))} \leq c_1 \left( \frac{1}{r^n} \int_{B(x,3r/4)} f^2 dv_g\right)^{\frac 12}.
\end{equation*}
where $c_1$ depends on $c$, on the dimension $n$ and on the constants appearing in the Sobolev inequality. 
\end{lemma}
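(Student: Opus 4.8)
The plan is to prove this by running a Moser iteration directly from the global Sobolev inequality \eqref{Sob1}, rather than by invoking interior elliptic estimates: near $\Sigma$ the smooth metric $g$ on $\Omega$ has no uniform control on curvature or injectivity radius, so the local constants produced by off-the-shelf results are not uniform in $x$. We may assume $f\ge 0$: in the situation of interest $f=|du|\ge 0$, and in general $f_+=\max(f,0)$ still satisfies $\Delta_g f_+\le c f_+$ weakly on $\Omega$ by Kato's inequality, so it suffices to control $\norm{f_+}_{L^\infty}$. Next I would normalise the scale. Fix $x\in\Omega$ and $0<r<d_g(x,\Sigma)/2$, so that $\overline{B_g(x,3r/4)}\subset\subset\Omega$, and set $g_r=r^{-2}g$. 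Then $B_{g_r}(x,1)=B_g(x,r)$, the quantity $\norm{f}_{L^\infty}$ is scale invariant, $\Delta_{g_r}f=r^2\Delta_g f\le(cr^2)f$, and $\frac1{r^n}\int_{B_g(x,3r/4)}f^2\,dv_g=\int_{B_{g_r}(x,3/4)}f^2\,dv_{g_r}$. Since $X$ is compact, $r\le\mathrm{diam}_g(X)$, so $c':=cr^2\le c\,\mathrm{diam}_g(X)^2$ is bounded independently of $x$ and $r$. Finally, rewriting \eqref{Sob1} in the metric $g_r$ for $u\in C^1_0(B_{g_r}(x,1))$ (a legitimate class, being supported away from $\Sigma$) gives
\begin{equation*}
\norm{u}_{L^{2n/(n-2)}}^2\le Ar^2\norm{u}_{L^2}^2+B\norm{du}_{L^2}^2\le\Lambda\bigl(\norm{u}_{L^2}^2+\norm{du}_{L^2}^2\bigr),\qquad \Lambda:=\max\bigl(A\,\mathrm{diam}_g(X)^2,\,B\bigr),
\end{equation*}
a scale invariant Sobolev inequality with $\Lambda$ independent of $x$ and $r$. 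Thus it is enough to show: on $(B_{g_r}(x,1),g_r)$, with this Sobolev inequality and $\Delta_{g_r}f\le c'f$, one has $\norm{f}_{L^\infty(B_{g_r}(x,1/2))}\le c_1\norm{f}_{L^2(B_{g_r}(x,3/4))}$ with $c_1=c_1(n,c',\Lambda)$.

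For this I would carry out the classical iteration. Write $w=f_+$ and $\chi=\frac{n}{n-2}>1$. For $\beta\ge 1$ and a cutoff $\phi\in C^1_0(B_{g_r}(x,1))$, testing the weak inequality $\int\langle dw,d\psi\rangle\le c'\int w\psi$ against $\psi=\phi^2w^{2\beta-1}$ — after the routine truncation $w\mapsto\min(w,M)$ to keep $\psi\in W^{1,2}$, then letting $M\to\infty$, the needed finiteness of $\int w^{2\beta}$ being supplied inductively by the previous step (the base case $\beta=1$ being the hypothesis $f\in L^2$) — and absorbing the cross term by Cauchy--Schwarz, one obtains the Caccioppoli estimate $\int\phi^2|d(w^\beta)|^2\le C\beta(c'+1)\int(\phi^2+|d\phi|^2)w^{2\beta}$. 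Plugging this, together with $\norm{d(\phi w^\beta)}_{L^2}^2\le 2\norm{\phi\,d(w^\beta)}_{L^2}^2+2\norm{w^\beta d\phi}_{L^2}^2$, into the Sobolev inequality applied to $\phi w^\beta$ yields
\begin{equation*}
\Bigl(\int(\phi w^\beta)^{2\chi}\Bigr)^{1/\chi}\le C'\beta\int(\phi^2+|d\phi|^2)w^{2\beta},\qquad C'=C'(n,c',\Lambda).
\end{equation*}
Choosing, for $1/2\le\rho'<\rho\le 3/4$, a cutoff equal to $1$ on $B_{g_r}(x,\rho')$, supported in $B_{g_r}(x,\rho)$ and with $|d\phi|\le 2/(\rho-\rho')$, this gives the reverse-Hölder inequality
\begin{equation*}
\norm{w}_{L^{2\beta\chi}(B_{g_r}(x,\rho'))}\le\Bigl(\frac{C''\beta}{(\rho-\rho')^2}\Bigr)^{1/(2\beta)}\norm{w}_{L^{2\beta}(B_{g_r}(x,\rho))}.
\end{equation*}

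It remains to iterate. I would take $\beta_k=\chi^k$ and radii $\rho_k=\tfrac12+2^{-k-2}\downarrow\tfrac12$, so that $\rho_k-\rho_{k+1}\simeq 2^{-k}$, and set $a_k=\norm{w}_{L^{2\chi^k}(B_{g_r}(x,\rho_k))}$. The inequality above then reads $a_{k+1}\le\bigl(C_3(4\chi)^k\bigr)^{1/(2\chi^k)}a_k$, hence
\begin{equation*}
a_k\le\prod_{j=0}^{k-1}\bigl(C_3(4\chi)^j\bigr)^{1/(2\chi^j)}\cdot a_0,
\end{equation*}
and the infinite product converges because $\sum_j\chi^{-j}$ and $\sum_j j\chi^{-j}$ are finite. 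Letting $k\to\infty$ and using $\liminf_k a_k\ge\norm{w}_{L^\infty(B_{g_r}(x,1/2))}$ (each $B_{g_r}(x,\rho_k)$ contains $B_{g_r}(x,1/2)$) gives $\norm{w}_{L^\infty(B_{g_r}(x,1/2))}\le c_1\,a_0=c_1\norm{f_+}_{L^2(B_{g_r}(x,3/4))}$ with $c_1$ depending only on $n$, $c'$ and $\Lambda$, that is, on $n$, $c$ and the constants of \eqref{Sob1}. Undoing the rescaling recovers the statement. The only genuinely delicate points are the two flagged above: turning the global Sobolev inequality into a uniform, scale invariant one on the balls — which works precisely because compactness of $X$ bounds $r$, so that no volume or geometry estimate on the balls themselves is needed — and the truncation/approximation that legitimises the power test functions when $f$ is a priori only in $L^2$.
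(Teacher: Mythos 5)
Your argument is correct and takes essentially the same route as the paper: a Moser iteration run directly off the global Sobolev inequality \eqref{Sob1} applied to compactly supported functions on balls $B(x,r)\subset\Omega$, with the compactness of $X$ used to absorb the zeroth-order Sobolev term. The only differences are technical packaging — you handle the powers of $f$ via truncated test functions $\phi^2w^{2\beta-1}$ in the weak formulation and normalise by rescaling the metric, whereas the paper derives the pointwise inequality $\Delta_g(f^{\gamma})\leq c\gamma f^{\gamma}$ through the regularisation $f_{\varepsilon}=\sqrt{f^2+\varepsilon^2}$ and tracks the radius $R_0$ explicitly through the iterated product of constants.
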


\begin{proof}
We claim that if $\Delta_g f \leq c f$ on $\Omega$ then for any $\gamma >1$ we have:
\begin{equation}
\label{powers}
\Delta_g (f^{\gamma}) \leq c \gamma f^{\gamma} \quad \mbox{ on } \Omega.
\end{equation}
For any $\eps>0$ define $f_{\eps}= \sqrt{f^2+\eps^2}>0$. Consider the Laplacian of $f_{\eps}^2$ on $\Omega$:
\begin{equation*}
f_{\eps}\Delta_g f_{\eps}-|df_{\eps}|^2 = \frac{1}{2}\Delta_g(f_{\eps}^2)=f\Delta_g f-|df|^2 \leq c f^2-|df|^2 \leq c f_{\eps}^2 -|df_{\eps}|^2.
\end{equation*}
We have shown that $f_{\eps}\Delta_g f_{\eps} \leq c f_{\eps}^2$ on $\Omega$. Now for $\gamma>1$ consider $\Delta_g(f_{\eps}^{\gamma})$. Since $x^{\gamma}$ is a convex function, non-decreasing on $\R^{+}$, on $\Omega$ we have:
\begin{align*}
\Delta_g(f_{\eps}^{\gamma})
&= \gamma (f_{\eps}^{\gamma-1}\Delta_g f_{\eps} -(\gamma-1) f_{\eps}^{\gamma-2}|df_{\eps}|^2) \\
& \leq \gamma f_{\eps}^{\gamma-1}\Delta_g f_{\eps} \\
& \leq c \gamma f_{\eps}^{\gamma}.
\end{align*}
where in the last inequality we used the fact that $f_{\eps}\Delta_g f_{\eps} \leq c f_{\eps}^2$ on $\Omega$. Now if we let $\eps$ go to zero we obtain $\eqref{powers}$. \\
Now let $R_0=d_g(x, \Sigma)/2$ and choose $0<r<R<R_0$. Consider a smooth function $\varphi$ having compact support in $B(x, R_0)$ such that $\varphi$ is equal to one in $B(x,r)$, it vanishes outside $B(x,R)$ and its gradient satisfies:
\begin{equation*}
|d\varphi|\leq \frac{2}{(R-r)}
\end{equation*}
Let us consider $\varphi f$. Then we have:
\begin{align*}
\int_{B(x,R)}|d(\varphi f)|^2 dv_g & = \int_{B(x,R)} (|d\varphi|^2f^2+\varphi^2f \Delta_gf) dv_g \\
\leq & \int_{B(x,R)} (|d\varphi|^2f^2+c\varphi^2f^2)dv_g \\
\leq & \frac{A_1}{(R-r)^2}\int_{B(x,R)} f^2 dv_g.
\end{align*}
for some positive constant $A_1$. We then apply the Sobolev inequality $\eqref{Sob1}$ to $\varphi f$:
\begin{align*}
\left(\int_{B(x,R)} |\varphi f|^{\p} dv_g \right)^{\frac{n-2}{n}} 
& \leq  A \int_{B(x,R)}\varphi^2f^2 dv_g + B \int_{B(x,R)}|d(\varphi f)|^2dv_g \\
& \leq  A \int_{B(x,R)}\varphi^2f^2 dv_g + \frac{A_1}{(R-r)^2}\int_{B(x,R)} f^2 dv_g \\
& \leq\frac{A_2}{(R-r)^2}||f||^2_{L^2(B(x,R)}
\end{align*}
If we denote $\gamma=\frac{n}{n-2}$, we have shown that:
\begin{equation}
\label{M}
\norm{f}_{L^{2\gamma}(B(x,r))}\leq \left(\frac{A_2}{(R-r)^2}\right)^{\frac{1}{2}}\norm{f}_{L^2(B(x,R)}
\end{equation}
Consider for $j\in \mathbb{N}$ the sequence of radius
\begin{equation*}
\begin{split}
r_j&=\Big(\frac{1}{2}+2^{-(j+3)}\Big) R_0 \\
R_j&=\Big(\frac{1}{2}+2^{-(j+2)}\Big) R_0.
\end{split}
\end{equation*}
so that $R_j-r_j=2^{-j-3}R_0$ and $R_{j+1}=r_j$. Thanks to $\eqref{powers}$, we can apply the same argument we used for $\varphi f$ to $\varphi f^{\gamma}$, and so on iteratively with $\gamma^j$, for $j=1, \ldots N$. 
This leads to:
\begin{equation}
||f||_{L^{2\gamma^N}(B(x,r_N))}\leq \prod_{j=0}^{N-1}\left(\frac{2^{2(j+3)}A_2 \gamma^j}{R_0^2}\right)^{\frac{1}{2\gamma^j}}||f||_{L^{2}(B(x,3R_0/4))}
\end{equation}
When we let $N$ tend to $\infty$, the left-hand side converges to the $L^{\infty}$-norm of $f$ on $B(x,r)$ , and the product in the right-hand side converges to a constant $C$ divided by $R_0^{-n/2}$. In fact we have:
\begin{align*}
\ln\left(\prod_{j=0}^{N-1}\left(\frac{2^{2(j+3)}A_2\gamma^j}{R_0^2}\right)^{\frac{1}{2\gamma^j}}\right)=
\frac{\ln(2)}{2}\sum_{j=0}^{N-1}\frac{j+3}{\gamma^j}&+\frac{\ln (\gamma)}{2}\sum_{j=0}^{N-1}\frac{j}{\gamma^j}\\
&+\frac{1}{2}\ln \left(\frac{A_2}{R_0^2}\right)\sum_{j=0}^{N-1}\frac{1}{\gamma^j}.
\end{align*}
The first two sums converges to a constant as $N$ tends to infinity. The last one tends to $\displaystyle \frac{1}{1-1/\gamma}=\frac n2$ so that at the end we obtain: 
\begin{equation*}
\norm {f}_{L^{\infty}(B(x,R_0/2))}\leq c_1 \left( \frac{1}{R_0^n} \int_{B(x,3R_0/4)} f^2 dv_g\right)^{\frac 12}.
\end{equation*}
as we wished.
\end{proof}

We recall a result contained in \cite{ACM14}(see Theorem A and Proposition 4.1): it allows to study the regularity of solutions of the Schrödinger equation $\Delta_g u = Vu$, for $V \in L^{\infty}(X)$, depending on the geometry of the tangent spheres. 

\begin{prop}
\label{1.3APP}
Let $(X^n,g)$ be a compact stratified space and $u \in W^{1,2}(X)$ be a solution to
\begin{equation*}
\Delta_g u= Vu.
\end{equation*}
for $V \in L^{\infty}(X)$.
Assume that  for any $x \in X$ we have $\lambda_1(S_x)\geq n-1$. Then there exist a constant $C$ and a sufficiently small radius $r_0$ such that for any $x \in X$ and $0<r<r_0$ we have:
\begin{equation}
\frac{1}{r^n}\int_{B(x,r)}|du|^2 dv_g \leq C|\ln(r)|. 
\end{equation}
\end{prop}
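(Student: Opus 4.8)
The plan is to reduce the estimate to a differential inequality for the energy $D(r)=\int_{B(x,r)}|du|^2\,dv_g$ and to exploit the spectral gap $\lambda_1(S_x)\ge n-1$ precisely at its critical value. First I would dispose of the regular points: if $x\in\Omega$, then for small $r$ the ball $B(x,r)$ avoids $\Sigma$, so $u$ is smooth there by interior elliptic regularity for $\Delta_g u=Vu$, $|du|$ is bounded near $x$, and $r^{-n}\int_{B(x,r)}|du|^2\le C$, which is far stronger than $C|\ln r|$. The content therefore lies at $x\in\Sigma$, where I would use the identification of a neighbourhood of $x$ with the tangent cone $(C(S_x),dr^2+r^2h_x)$, writing the iterated edge metric as the cone metric plus a perturbation $E=O(r^{\gamma})$. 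The target estimate is equivalent to $\frac{d}{dr}\big(r^{-n}D(r)\big)\ge -C/r$: integrating this from $r$ to a fixed $r_0$ gives $r^{-n}D(r)\le r_0^{-n}D(r_0)+C\,|\ln(r_0/r)|\le C'|\ln r|$, which is the claim.

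The heart of the matter is the threshold $n-1$, and I would isolate it on the model cone by separating variables. Expanding $u=\sum_{\ell\ge0}u_\ell(\rho)\phi_\ell$ in an $L^2$-orthonormal basis of eigenfunctions of $\Delta_{h_x}$ on $S_x$, with eigenvalues $0=\lambda_0<\lambda_1\le\lambda_2\le\cdots$, the radial factors solve an Euler equation $u_\ell''+\tfrac{n-1}{\rho}u_\ell'-\tfrac{\lambda_\ell}{\rho^2}u_\ell=f_\ell$, whose homogeneous profile is $\rho^{\alpha_\ell}$ with $\alpha_\ell=\tfrac12\big(-(n-2)+\sqrt{(n-2)^2+4\lambda_\ell}\big)$ the only exponent compatible with $u\in W^{1,2}$ near the tip. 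The assumption $\lambda_\ell\ge\lambda_1\ge n-1$ is exactly $\alpha_\ell\ge1$, with equality at the critical value $\lambda_\ell=n-1$. Setting $d_\ell(r)=\int_0^r\big(u_\ell'^2+\tfrac{\lambda_\ell}{\rho^2}u_\ell^2\big)\rho^{n-1}\,d\rho$, a direct Hardy-type computation for the homogeneous profile gives $r\,d_\ell'(r)=(2\alpha_\ell+n-2)\,d_\ell(r)\ge n\,d_\ell(r)$, an equality precisely in the critical mode. Summing over $\ell$ (the constant mode $\ell=0$ has radial gradient of order $\rho$ and contributes only at order $r^{n+2}$) yields $r\,D'(r)\ge n\,D(r)$ for the cone, potential-free part. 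The same relation can be produced invariantly, without separation of variables, from the Rellich--Pohozaev identity for the dilation field $r\partial_r$ — the conformal field of the cone — the tangential boundary term being bounded below through the spectral gap on each slice $\partial B(x,\rho)$.

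I would then reinstate the lower-order terms. The potential contributes $P(r)=\int_{B(x,r)}(r\partial_r u)\,Vu\,dv_g$, and the metric error $E=O(r^{\gamma})$ contributes an analogous remainder $\mathcal E(r)$; both are to be bounded using $\norm{V}_\infty<\infty$, the Sobolev embedding $W^{1,2}(X)\hookrightarrow L^{\p}(X)$ together with \eqref{Sob1}, and a Caccioppoli inequality on dyadic annuli. Absorbing them via Young's inequality and a Gr\"onwall step turns the clean inequality of the previous paragraph into $r\,D'(r)\ge n\,D(r)-C\,r^{n}$, that is $\frac{d}{dr}\big(r^{-n}D(r)\big)\ge -C/r$, which integrates to the desired bound. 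Conceptually, the logarithm is the signature of the critical case $\lambda_1=n-1$: since no geometric (power) decay of the annular energy is available there, the accumulated contribution of the remainders over the $\sim|\ln r|$ dyadic scales between $r$ and $r_0$ is exactly of size $|\ln r|\,r^n$.

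The main obstacle is closing this last step rigorously and uniformly. The bound on $P(r)$ requires controlling $\int_{B(x,r)}u^2$, yet the boundedness of $u$ is itself an eventual output of this machinery, so a naive estimate is circular and, for large $n$, too lossy to beat $n\,D(r)$; I expect to need a coupled estimate, running the same scheme simultaneously for $\int_{B(x,r)}u^2$, and an absorption argument to decouple the two. A second difficulty is genuinely geometric: converting the tangential boundary term of the Pohozaev identity into a usable bulk lower bound forces one to rebuild a frequency-type monotonicity on the singular cone with only $W^{1,2}$ regularity, and the hypothesis $\lambda_1(S_x)\ge n-1$ must hold uniformly in $x$ across all strata, which is where an induction on the depth of $X$ enters.
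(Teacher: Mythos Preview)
The paper does not supply a proof of this proposition: it is quoted from \cite{ACM14} (Theorem~A and Proposition~4.1 there), so there is no argument in the present paper to compare your attempt against directly. That said, your outline is close in spirit to the approach of \cite{ACM14}: the recognition that $\lambda_1(S_x)\ge n-1$ is exactly the threshold $\alpha_\ell\ge 1$ for the indicial exponents, the monotonicity-type inequality $r\,D'(r)\ge n\,D(r)$ on the model cone (which can be packaged as an Almgren frequency argument or via the Rellich--Pohozaev identity you invoke), and the logarithmic defect in the borderline case are the mechanisms at work there.

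One correction to your self-diagnosis: the circularity you worry about in estimating $P(r)$ is not present. For $\Delta_g u=Vu$ with $V\in L^\infty(X)$, the bound $u\in L^\infty(X)$ already follows from Moser iteration on the stratified space (Proposition~1.8 of \cite{ACM12}), which relies only on the Sobolev inequality \eqref{Sob1} and is entirely independent of the present proposition. With $\norm{u}_\infty<\infty$ and the Ahlfors upper volume bound $\vol_g(B(x,r))\le C r^n$ for the iterated edge metric, one has $\int_{B(x,r)}u^2\le C r^n$, which is precisely what you need to absorb the potential term into $-C/r$ after dividing by $r^{n+1}$. Your second concern---making the Pohozaev/frequency step rigorous and uniform across strata, with only $W^{1,2}$ data on the cross-sections and the induction on depth---is genuine, and is where the substantive work of \cite{ACM14} lies; your sketch correctly locates the difficulty but does not yet resolve it.
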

\begin{rem} 
In Section 3.6 of \cite{ACM14}, it is shown that asking for any $x \in X$ that $\lambda_1(S_x)\geq (n-1)$ is equivalent to ask that for each link $(Z_j,k_j)$ the first eigenvalue $\lambda(Z_j)$ of the Laplacian with respect to $k_j$ is greater or equal than the dimension of $Z_j$.
\end{rem}
The proof of Proposition $\ref{reg}$ follows from the previous results:
\begin{proof}[Proof of Proposition $\ref{reg}$]
Let $x \in \Omega$ and $B(x,r)$ a ball of radius $0 < r < d_g(x, \Sigma)/2$, which is entirely contained in $\Omega$. Lemma $\ref{L2}$ allows us to bound the $L^{\infty}$-norm of $|du|$ on $B(x,r/2)$ with the mean of its $L^2$-norm on a ball of radius $3r/4$. The square of this last quantity is bounded by some constant times $|\ln(r)|$, thanks to Proposition $\ref{1.3APP}$. Therefore, we get the desired inequality outside an $\eps$ tubular neighbourhood of $\Sigma$ by choosing an appropriate small radius $r$.
\end{proof}

\subsubsection{Applications}
Assume that $(X^n, g)$ is a compact stratified spaces which satisfies the hypothesis of Proposition $\ref{reg}$, i.e. such that $Ric_g \geq (n-1)g$ and for any $x \in X$ we have $\lambda_1(S_x)=(n-1)$. We give two examples of equations to which we can apply Proposition $\ref{reg}$. \newline

\textbf{Example 1}: Let $\varphi$ be a locally Lipschitz function on $\R$ and consider $u \in W^{1,2}(X)$ solution of:
\begin{equation}
\label{ex}
\Delta_g u = c \varphi(u).
\end{equation}
Remark that among these solutions there are clearly the eigenfunctions of the Laplacian. It is possible to show that $u$ is bounded, by applying Moser iteration technique as in Proposition 1.8 in \cite{ACM12}. Moreover, it is not difficult to prove that there exists a constant $c_1$ such that on $\Omega$ we have $\Delta_g |du|\leq c_1 |du|$. This is done by using Bochner-Lichnerowicz formula, as the following lemma shows. 

\begin{lemma}[Bochner method]
\label{L1}
Let $(X^n,g)$ be a compact stratified space such that $Ric_g\geq(n-1)g$. Let $u \in W^{1,2}(X)$ be a solution of $\eqref{ex}$. Then on the regular set $\Omega$ we have:
\begin{equation*}
\Delta_g |du| \leq c_1 |du|
\end{equation*}
for some positive constant $c_1$.
\end{lemma}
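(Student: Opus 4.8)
The whole argument takes place on the regular set $\Omega$, where $g$ is a genuine smooth Riemannian metric, so this is, up to bookkeeping, the classical Bochner computation underlying Lichnerowicz-type estimates. First I would pin down the regularity of $u$ on $\Omega$. By the Moser iteration recalled in Example~1, $u$ is bounded; since $\varphi$ is locally Lipschitz, the right-hand side $c\varphi(u)$ of \eqref{ex} is Lipschitz on $\Omega$, so interior elliptic regularity for $\Delta_g u = c\varphi(u)$ on the smooth manifold $\Omega$ gives $u \in W^{3,p}_{loc}(\Omega)$ for every $p$, hence $u\in C^{2,\alpha}_{loc}(\Omega)$ and $\Delta_g u \in C^{0,1}_{loc}(\Omega)$. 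In particular $|du|^2$ and $\nabla^2 u$ are defined a.e. on $\Omega$, the pointwise Bochner identity makes sense there, and by the chain rule for Lipschitz functions $d(\Delta_g u) = c\,\varphi'(u)\,du$ a.e.\ on $\Omega$.

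Next, on $\Omega$ I would apply the Bochner--Weitzenböck formula for $du$ (in the paper's convention, for which $\Delta_g$ has nonnegative spectrum):
\[
\tfrac12\,\Delta_g|du|^2 \;=\; \langle d(\Delta_g u),\,du\rangle \;-\; |\nabla^2 u|^2 \;-\; Ric_g(du,du).
\]
Using the equation, $\langle d(\Delta_g u),du\rangle = c\,\varphi'(u)\,|du|^2$, which is bounded a.e.\ by $|c|\,L\,|du|^2$, where $L$ is the Lipschitz constant of $\varphi$ on the compact range of the bounded function $u$. The hypothesis $Ric_g\geq(n-1)g$ gives $Ric_g(du,du)\geq 0$, so that term only helps. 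Hence $\tfrac12\Delta_g|du|^2 \leq |c|L\,|du|^2 - |\nabla^2 u|^2$ a.e.\ on $\Omega$.

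Finally I would pass from $|du|^2$ to $|du|$ by the same regularization used in the proof of Lemma~\ref{L2}: set $w_\varepsilon=\sqrt{|du|^2+\varepsilon^2}>0$, which lies in $W^{2,p}_{loc}(\Omega)$. Since $w_\varepsilon^2$ differs from $|du|^2$ by a constant, $\tfrac12\Delta_g(w_\varepsilon^2)=w_\varepsilon\Delta_g w_\varepsilon-|dw_\varepsilon|^2=\tfrac12\Delta_g|du|^2$; and $|dw_\varepsilon|^2=\frac{|du|^2}{w_\varepsilon^2}\,|d|du||^2\leq|d|du||^2\leq|\nabla^2 u|^2$ by Kato's inequality $|d|du||\leq|\nabla^2 u|$. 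Plugging this in, the Hessian term cancels:
\[
w_\varepsilon\,\Delta_g w_\varepsilon \;=\; \tfrac12\Delta_g|du|^2 + |dw_\varepsilon|^2 \;\leq\; |c|L\,|du|^2 \;\leq\; |c|L\,w_\varepsilon^2 ,
\]
so $\Delta_g w_\varepsilon\leq c_1 w_\varepsilon$ on $\Omega$ with $c_1=|c|L$ independent of $\varepsilon$. Letting $\varepsilon\to 0$ (using $w_\varepsilon\to|du|$ locally uniformly on $\Omega$ and stability of the differential inequality under this limit) yields $\Delta_g|du|\leq c_1|du|$ on $\Omega$, which is the claim.

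\textbf{Expected main difficulty.} There is no genuinely singular obstacle here, since everything lives on the smooth part $\Omega$; the only points that need care are (i) justifying the pointwise Bochner identity from the merely $C^{2,\alpha}$ regularity of $u$, which is handled by the elliptic bootstrap in the first step, and (ii) the non-smoothness of $|du|$ on the critical set $\{du=0\}$, which is exactly what the $w_\varepsilon$-regularization and Kato's inequality dispose of — and one should make sure the limiting inequality is precisely the hypothesis needed downstream, which it is, since Lemma~\ref{L2} and Proposition~\ref{reg} only use $\Delta_g|du|\leq c_1|du|$ on $\Omega$.
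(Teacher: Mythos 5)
Your proof is correct and follows essentially the same route as the paper's: boundedness of $u$ via Moser iteration, the Bochner--Lichnerowicz formula combined with the bound on $\varphi'$ over the range of $u$ and the Ricci lower bound, then the regularization $\sqrt{|du|^2+\varepsilon^2}$ with Kato's inequality to cancel the Hessian term and pass to the limit. The only difference is that you spell out the interior elliptic bootstrap justifying the pointwise Bochner identity, which the paper leaves implicit.
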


\begin{proof}
Since $u$ is a solution to $\eqref{ex}$, we can assume that it is positive. As we observed above, $u$ is also bounded. For $\varepsilon>0$, let us introduce
\begin{equation*}
f_{\varepsilon}=\sqrt{|du|^2+\varepsilon^2}>0
\end{equation*}
We will consider $\Delta_g(f_{\varepsilon}^2)$ in order to obtain an inequality of the type $f_{\eps}\Delta_gf_{\varepsilon} \leq c f_{\varepsilon}^2$: dividing by $f_{\eps}$ and letting $\varepsilon$ tend to zero will allow us to conclude. We have
\begin{equation*}
f_{\varepsilon}\Delta_gf_{\varepsilon}-|df_{\varepsilon}|^2 =\frac{1}{2}\Delta_g(|du|^2+\varepsilon^2)=(\nabla^*\nabla du, du)-|\nabla du|^2.
\end{equation*}
The Bochner-Lichnerowicz formula holds on the regular set $\Omega$. By applying it to the equation $\eqref{ex}$ we get:
\begin{equation*}
\nabla^*\nabla du + Ric_g(du)=c\varphi'(u)du
\end{equation*}
We can now multiply by $du$. Since $u$ is bounded, the Ricci tensor $Ric_g$ is bounded by below by $(n-1)g$ and the derivative of $\varphi$ is bounded on $[0, \norm{u}_{\infty}]$, we obtain:
\begin{equation*}
(\nabla^*\nabla du, du) \leq c_1 |du|^2 -(n-1)|du|^2.
\end{equation*}
As a consequence we have:
\begin{equation*}
\begin{split}
(\nabla^*\nabla du, du) -|\nabla du|^2 
& \leq  c_1|du|^2 -(n-1)|du|^2 -|\nabla du|^2 \\
& \leq c_1|du|^2 - |\nabla du|^2.
\end{split}
\end{equation*}
We also observe that, by elementary calculations and Kato's inequality:
\begin{equation*}
|df_{\varepsilon}|^2=\frac{|du|^2|\nabla|du||^2}{|du|^2+\varepsilon^2}\leq |\nabla|du||^2 \leq |\nabla du|^2.
\end{equation*}
and as a consequence we get: 
\begin{align*}
f_{\varepsilon}\Delta_g f_{\varepsilon}-|df_{\varepsilon}|^2 & =
(\nabla^*\nabla du, du) -|\nabla du|^2 \\
&\leq c_1|du|^2 - |\nabla du|^2 \\
& \leq c_1 f_{\eps}^2 - |df_{\varepsilon}|^2.
\end{align*}
In conclusion $f_{\eps} \Delta_g f_{\varepsilon} \leq c_1 f_{\eps}^{2}$. Since $f_{\varepsilon}$ is positive everywhere, we can divide and obtain $\Delta_gf_{\varepsilon}\leq c f_{\varepsilon}$. By letting $\varepsilon$ go to zero, we deduce the desired inequality on $|du|$. 
\end{proof}
\textbf{Example 2}: 
When the metric is Einstein and the Ricci tensor $Ric_g$ is exactly equal to $(n-1)g$, we can also apply Proposition $\ref{reg}$ to the Yamabe equation:
\begin{equation}
\label{Ye}
\Delta_g u + a_n S_g u= a_nS_g u^{\frac{n+2}{n-2}}. 
\end{equation}
It is proven in \cite{ACM12} that a solution $u$ of the Yamabe equation $\eqref{Ye}$, when it exists, is in $W^{1,2}(X) \cap L^{\infty}(X)$. When $Ric_g=(n-1)g$, the scalar curvature is constant: thus we can differentiate the equation as we did in Lemma $\ref{L1}$ and apply the same technique in order to obtain $\Delta|du|\leq c_1 |du|$.

\section{Eigenvalues of the Laplacian Operator}
The aim of the next session is to show that the condition $\lambda_1(S_x)\geq (n-1)$ for any $x \in X$ holds in a large class of stratified spaces. Such class is given by admissible stratified spaces, that we define as follows:
\begin{D}
An \emph{admissible stratified space} is a compact stratified space $(X^n,g)$ which satisfies the following assumptions:
\begin{itemize}
\item[(1)] If there exists a stratum $X_{n-2}$ of codimension 2, its link has diameter smaller than $\pi$.
\item[(2)] The iterated edge metric $g$ satisfies $Ric_g\geq k(n-1)$, for some $k>0$, on the dense smooth set $\Omega$.
\end{itemize}
\end{D}
A classical result by Lichnerowicz states that for a compact smooth manifold $(M^n,g)$ with $Ric_g\geq k(n-1)$ with $k>0$, the lowest non-zero eigenvalue of the Laplacian is greater or equal to $kn$ (see for example \cite{Gallot}).We are going to extend this result to the case of admissible stratified spaces.

\begin{theorem}
\label{Lich}
Let $(X,g)$ be an admissible stratified space. Any non-zero eigenvalue $\lambda$ of the Laplacian $\Delta_g$ is greater or equal to $kn$.
\end{theorem}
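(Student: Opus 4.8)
The plan is to argue by induction on the depth of $X$, the base case being a closed smooth manifold, where the statement is the classical Lichnerowicz theorem (see \cite{Gallot}). Throughout we may assume $k=1$: replacing $g$ by $kg$ rescales the eigenvalues by $k^{-1}$ and leaves each link $(Z_j,k_j)$ — hence the admissibility — unchanged, so $\lambda\geq kn$ for $(X,g)$ is equivalent to $\lambda\geq n$ for $(X,kg)$. Fix a nonzero eigenvalue $\lambda$ with eigenfunction $u$, $\Delta_g u=\lambda u$; by Moser iteration ($u$ solves $\eqref{ex}$ with $\varphi=\id$, as in \cite{ACM12}) we have $u\in L^{\infty}(X)$, and Lemma~\ref{L1} gives $\Delta_g|du|\leq c_1|du|$ on $\Omega$.

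The first point is to verify the hypothesis $\lambda_1(S_x)\geq n-1$ for every $x\in X$, so that Proposition~\ref{reg} applies. At a regular point this is the equality $\lambda_1(\s^{n-1})=n-1$. At a singular point, by the Remark following Proposition~\ref{1.3APP} it is equivalent to $\lambda_1(Z_j)\geq\dim Z_j$ for each link. For a codimension $2$ stratum the link is a circle $\s^1_a$ and admissibility forces its diameter $<\pi$, i.e. $a<1$, whence $\lambda_1(\s^1_a)=a^{-2}\geq 1=\dim\s^1_a$. For the remaining links one checks that they are again admissible stratified spaces of strictly smaller depth — condition $(2)$ holds by Lemma~\ref{RicciBound} with Ricci constant $1$, and their codimension $2$ links are among those of $X$, hence of diameter $<\pi$ — so the inductive hypothesis gives $\lambda_1(Z_j)\geq\dim Z_j$. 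Consequently Proposition~\ref{reg} and Remark~\ref{lp} apply to $u$: we get $\norm{du}_{L^{\infty}(X\setminus\tub{\eps})}\leq C\sqrt{|\ln\eps|}$ and $|du|\in L^p(X)$ for all finite $p$.

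The heart of the proof is to legitimize, on the singular space, the integrated Bochner identity
\begin{equation*}
\int_\Omega|\nabla du|^2\,dv_g+\int_\Omega Ric_g(du,du)\,dv_g=\lambda^2\norm{u}_2^2.
\end{equation*}
Starting from the pointwise Bochner--Lichnerowicz formula on $\Omega$ and using $\Delta_g u=\lambda u$, multiply by $\cutoff^2$, where $\cutoff$ is a logarithmic cutoff vanishing on $\tub{\eps}$, equal to $1$ outside $\tub{\sqrt\eps}$, with $\cutoff\nearrow 1$ and $\norm{d\cutoff}_2^2\lesssim|\ln\eps|^{-1}$ — such cutoffs exist because $\Sigma$ has codimension $m\geq 2$ (for $m>2$ one may even take $\norm{d\cutoff}_2\to 0$). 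After integrating by parts, the only term not obviously convergent is $-\tfrac12\int\cutoff^2\Delta_g|du|^2=-\int\cutoff\langle d\cutoff,d|du|^2\rangle$, which by Kato's inequality is bounded by $2\int\cutoff|d\cutoff|\,|du|\,|\nabla du|$ and is supported on the shrinking annulus $\tub{\sqrt\eps}\setminus\tub{\eps}$, where $|du|\leq C\sqrt{|\ln\eps|}$. A preliminary run of this computation, using Young's inequality to absorb $\tfrac12\int\cutoff^2|\nabla du|^2$ on the left and the bound $\int|d\cutoff|^2|du|^2\leq\norm{du}^2_{L^{\infty}(\text{annulus})}\norm{d\cutoff}_2^2=O(1)$, yields a uniform estimate $\int\cutoff^2|\nabla du|^2\leq C$, hence $\nabla du\in L^2(\Omega)$ by monotone convergence. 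In the second run we pull out $\norm{du}_{L^{\infty}(\text{annulus})}\leq C\sqrt{|\ln\eps|}$, apply Cauchy--Schwarz with $\norm{d\cutoff}_2^2\lesssim|\ln\eps|^{-1}$, and are left with a constant times $\big(\int_{\tub{\sqrt\eps}\setminus\tub{\eps}}|\nabla du|^2\big)^{1/2}$, which tends to $0$ by absolute continuity of the integral. Thus the cutoff error vanishes and monotone/dominated convergence gives the identity. Finally the pointwise bounds $|\nabla du|^2\geq\frac1n(\Delta_g u)^2=\frac{\lambda^2}{n}u^2$ and $Ric_g(du,du)\geq(n-1)|du|^2$, integrated and combined with $\norm{du}_2^2=\lambda\norm{u}_2^2$, give $\frac{\lambda^2}{n}\norm{u}_2^2+(n-1)\lambda\norm{u}_2^2\leq\lambda^2\norm{u}_2^2$, hence $\lambda\geq n$.

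I expect the main obstacle to be precisely the two-step bootstrap in the third paragraph: one cannot pass to the limit in the cutoff error before $\nabla du\in L^2(\Omega)$ is known, and in codimension $2$ the estimate is borderline — the $\sqrt{|\ln\eps|}$ growth of $|du|$ coming from Proposition~\ref{reg} is exactly compensated by the $|\ln\eps|^{-1/2}$ decay of $\norm{d\cutoff}_2$, with no margin to spare. The other point requiring care, though routine, is checking that admissibility is inherited by the links so that the induction on depth closes.
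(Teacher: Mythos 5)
Your proposal is correct, and its skeleton is the one the paper uses: an induction to secure $\lambda_1(S_x)\geq n-1$ (equivalently $\lambda_1(Z_j)\geq \dim Z_j$), then Proposition~\ref{reg} for the logarithmic gradient bound, then an integrated Bochner--Lichnerowicz inequality with cut-offs, and finally $|\nabla du|^2\geq(\Delta_g u)^2/n$. The differences are real but technical, and your version is the leaner of the two. First, the paper inducts on the dimension via the tangent spheres $S_x$ (base case $n=1$), while you induct on the depth via the links $Z_j$ (base case: classical Lichnerowicz, with the circle links handled directly through the diameter condition); by the Remark following Proposition~\ref{1.3APP} these are interchangeable, and both rest on the same lightly justified structural fact that admissibility passes to $S_x$, resp.\ to $Z_j$. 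Second, and more substantially: the paper multiplies the Bochner inequality by $\cutoff$ to the first power and integrates by parts twice, so it needs pointwise control of $\Delta_g\cutoff$; this forces it to build $\cutoff$ by smoothing a harmonic function, and in codimension $2$, where the Dirichlet energy only decays like $|\ln\eps|^{-1}$, it must interpose an extra bootstrap (first $\nabla d\varphi\in L^2$, then $|d\varphi|\in W^{1,2}(X)\cap L^{\infty}(X)$ via a weak inequality and Moser iteration) before a cut-off satisfying (i) and (ii) exists. Your device of multiplying by $\cutoff^2$ and integrating by parts only once leaves nothing but $d\cutoff$ in the error term, so a Lipschitz function of the distance to $\Sigma$ suffices, and your two-run scheme --- Young's inequality to get $\nabla du\in L^2(\Omega)$ uniformly, then Cauchy--Schwarz plus absolute continuity of the integral to kill the error --- closes the borderline codimension-$2$ case without ever proving $|du|\in L^{\infty}(X)$. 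You are right that the exponents match with no margin: $\norm{du}_{L^{\infty}(\tub{\sqrt\eps}\setminus\tub{\eps})}\leq C\sqrt{|\ln\eps|}$ exactly cancels $\norm{d\cutoff}_2\lesssim|\ln\eps|^{-1/2}$, and it is the remaining factor $\norm{\nabla du}_{L^2(\tub{\sqrt\eps}\setminus\tub{\eps})}\to 0$ that saves the day. Both arguments are valid; yours trades the harmonic-extension machinery and the $L^{\infty}$ bound on $|d\varphi|$ for the $\cutoff^2$ trick.
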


\begin{rem}
For smooth Riemannian manifolds, a theorem by Obata characterizes the case of equality (see \cite{Obata}), i.e. under the same hypothesis of Lichnerowicz's theorem, $\lambda_1=kn$ if and only if $(M^n,g)$ is isometric to the standard sphere $\s^n$ of radius $1/\sqrt{k}$. In the case of stratified spaces, it would be interesting to obtain a similar result: we conjecture that if $(X^n,g)$ is an admissibile stratified space and the first non-zero eigenvalue of the Laplacian is equal to $kn$, then $(X^n,g)$ must be the spherical suspension of an $(n-1)$-dimensional admissible stratified space. 
\end{rem}

\begin{rem}
In \cite{BacherSturm} the authors give an analogous Lichnerowicz theorem for spherical cones $\Sigma(M)$ (considered as metric measure spaces) on a compact Riemannian manifold $(M,g)$ with lower Ricci bound $Ric_g \geq (n-1)g$. They use the existence of a curvature dimension condition $CD(n,n+1)$ on $\Sigma(M)$ in the generalized sense of Sturm and Lott-Villani. \\
Our theorem applies more generally to cones over  any stratified space $(X,g)$ having a lower Ricci bound on the regular set $\Omega$.
\end{rem}

\begin{proof}
Without loss of generality, we can rescale the metric and assume that $k=1$. We proceed by iteration on the dimension $n$ of the space. \\
If $n=1$, by our hypothesis $X$ must be a circle of diameter smaller than $\pi$. Then the first eigenvalue of the Laplacian is greater than $1$, and the proposition is true in dimension 1.

Assume that the statement is true for any dimension until $(n-1)$ and consider an admissible stratified space $X$ of dimension $n$. For any $x\in X$, the tangent sphere $S_x$ is an admissible stratified space of dimension $(n-1)$. By Lemma $\ref{RicciBound}$, the condition $Ric_g\geq (n-1)g$ implies that for any $x$ the metric $h_x$ satisfies $Ric_{h_x}\geq (n-2)h_x$. Therefore, by the iteration argument, for any $x \in X$:
\begin{equation*}
\lambda_1(S_x)\geq (n-1)
\end{equation*}

As a consequence, the hypothesis of Proposition $\ref{reg}$ are satisfied by $S_x$. As we have shown in the first example of 1.2.1 , we can apply this result to any eigenfunction $\varphi$ of the Laplacian. Therefore for any $\eps>0$ we have:
\begin{equation}
\norm{d\varphi}_{L^{\infty}(X\setminus \Sigma^{\eps})}\leq C\sqrt{|\ln(\eps)|}. 
\end{equation}
Recall also that by Moser iteration technique $\varphi$ is bounded.

Since we have an estimation of the behaviour of $d\varphi$ depending on the distance from the singular set, the rest of the proof is an adaptation of the classical one by means of well-chosen cut-off functions.
Consider for $\eps>0$ a cut-off function $\cutoff$, being equal to one outside $\tub{\eps}$, vanishing on some smaller tubular neighbourhood of $\Sigma$ and such that between the two tubular neighbourhoods $0 \leq \cutoff \leq 1$. We are going to specify the choice of such function in the following.

If $\varphi$ is an eigenfunction relative to the eigenvalue $\lambda$, then by the Bochner-Lichnerowicz formula on $\Omega$ we have:
\begin{equation*}
\nabla^*\nabla d\varphi + Ric_g(d\varphi)= \lambda d\varphi
\end{equation*} 
We then consider the Laplacian of $|d\varphi|^2$ and get:
\begin{equation}
\label{B1}
\frac 12 \Delta_g |d\varphi|^2= (\nabla^*\nabla d\varphi,d\varphi)-|\nabla d\varphi|^2 \leq \lambda |d\varphi|^2 -(n-1)|d\varphi|^2-|\nabla d\varphi|^2.
\end{equation}
If we multiply $\eqref{B1}$ by $\cutoff$ and integrate by parts we obtain:
\begin{equation}
\label{IntB1}
\int_X \Delta_g (\rho_{\varepsilon}) \frac{|d\varphi|^2}{2}dv_g \leq \int_X \rho_{\varepsilon} ((\lambda-(n-1))|d\varphi|^2-|\nabla d\varphi|^2) dv_g
\end{equation} 
We study the right-hand side and we consider the first term. By elementary calculations and integration by parts formula we can rewrite:
\begin{equation}
\label{IntB2}
\begin{split}
\int_X \rho_{\varepsilon}|d\varphi|^2dv_g&= \int_X(d(\cutoff\varphi),d\varphi)-\varphi(d\cutoff, d\varphi)) dv_g
 \\
& = \int_X \rho_{\varepsilon}\varphi\Delta_g\varphi dv_g - \int_X \varphi(d\cutoff, d\varphi)dv_g \\
& = \frac{1}{\lambda} \int_X \cutoff (\Delta_g\varphi)^2 dv_g - \int_X \varphi(d\cutoff, d\varphi)dv_g.
\end{split}
\end{equation}
In order conclude the proof, we need to choose $\cutoff$ such that when $\eps$ goes to zero we have:
\begin{itemize}
\item[(i)] the left-hand side of $\eqref{IntB1}$ tends to zero;
\item[(ii)] the last term of the right-hand side in $\eqref{IntB2}$ tends to zero.
\end{itemize}
If we can find such a cut-off function, when we pass to the limit as $\eps$ goes to zero we obtain:
\begin{equation*}
\left(1-\frac{(n-1)}{\lambda} \right)\int_X (\Delta_g\varphi)^2dv_g-\int_X|\nabla d\varphi|^2 dv_g\geq 0
\end{equation*}
Moreover, by Cauchy-Schwarz inequality $\displaystyle |\nabla du|^2 \geq \frac{(\Delta_g \varphi)^2}{n}$, so that we finally have:
\begin{equation*}
\left(1-\frac{(n-1)}{\lambda}-\frac 1n \right)\int_X (\Delta_g\varphi)^2dv_g \geq 0.
\end{equation*}
which leads to $\lambda \geq n$. \\
It remains to show that it is actually possible to construct a cut-off function having the properties (i) and (ii). This is done in the following. \\ 

\textbf{Choice of the cut-off functions} \\
We have to distinguish two different cases, whether the codimension $m$ of $\Sigma$ is strictly greater than two, or equal to two. \\

\emph{Case 1:} Firstly assume $m > 2$. Consider $\eps>0$ and the tubular neighbourhoods $\tub{\eps}$, $\tub{2\eps}$.
We want to build a cut-off function $\cutoff$ which is equal to 1 on $X \setminus \tub{2\eps}$ and vanishes on $\tub{\eps}$. Moreover, we need the gradient $d\cutoff$ and the Laplacian $\Delta_g \cutoff$ to decay "fast enough" as $\varepsilon$ tends to zero. We will obtain $\cutoff$ from a harmonic function, as explained in the following.\\
Let $h_{\varepsilon}$ be the harmonic extension of the function which is equal to $1$ on the boundary of $\Sigma^{2\varepsilon}$ and vanishes on the boundary of $\Sigma^{\varepsilon}$, i.e. $h_{\varepsilon}$ satisfies:
\begin{equation*}
\begin{cases}
\Delta_g h_{\varepsilon} =0 \\
h_{\varepsilon} = 1 \mbox{ on } \partial\Sigma^{2\varepsilon} \\
h_{\varepsilon} = 0 \mbox{ on } \partial \Sigma^{\varepsilon}.
\end{cases}
\end{equation*}
The harmonic extension has a variational characterization, i.e. if we consider the Dirichlet energy $\mathcal{E}$ defined by:
\begin{equation*}
\mathcal{E}(\varphi)= \int_{\Sigma^{2\varepsilon}\setminus \Sigma^{\varepsilon}} |d\varphi|^2 dv_g.
\end{equation*}
then $h_{\eps}$ attains the infimum of the functional $\mathcal{E}$ among all functions $\varphi \in W^{1,2}(X)$ taking values $1$ on $\partial\Sigma^{2\varepsilon}$ and vanishing on $\partial\Sigma^{\varepsilon}$.\\
Let $r$ be the distance function from the singular set $\Sigma$, i.e. $r(x)=d_g(x, \Sigma)$, and consider the following function $\psi_{\eps}$:
\begin{equation*}
\displaystyle \psi_{\varepsilon}(r)=
\begin{cases}
1 \mbox{ on } X \setminus \Sigma^{2\varepsilon} \\
\displaystyle \frac{r}{\varepsilon}-1 \mbox{ on }\Sigma^{2\varepsilon}\setminus \Sigma^{\varepsilon} \\
0 \mbox{ on } \Sigma^{\varepsilon}.
\end{cases}
\quad |d\psi_{\varepsilon}|= \frac{1}{\varepsilon}.
\end{equation*}
It is then easy to estimate the Dirichlet energy of $\psi_{\varepsilon}$:
\begin{equation*}
\mathcal{E}(\psi_{\varepsilon})= \int_{\Sigma^{2\varepsilon}\setminus \Sigma^{\varepsilon}}|d\psi_{\varepsilon}|^2dv_g \leq c' \varepsilon^{m-2}.
\end{equation*}
By the variational characterization of $h_{\varepsilon}$, $\mathcal{E}(h_{\varepsilon})\leq \mathcal{E}(\psi_{\varepsilon})$, so that
\begin{equation}
\label{hf1}
\mathcal{E}(h_{\varepsilon})\leq c' \varepsilon^{m-2}.
\end{equation}
However, $h_{\varepsilon}$ is not necessarily smooth. The cut-off function $\rho_{\varepsilon}$ will be obtained by composing $h_{\varepsilon}$ with a smooth function $\rho$ vanishing on $(-\infty,\frac{1}{4}]$ end being equal to one on $[\frac{3}{4},+\infty)$: more precisely, $\cutoff=\rho \circ h_{\eps}$. As a consequence we have:
\begin{equation*}
d\cutoff= (\rho'\circ h_{\eps}) dh_{\eps} \quad \text{and} \quad \Delta_g \cutoff= -(\rho''\circ h_{\eps})|dh_{\eps}|^2.
\end{equation*}
Since $\rho$ is smooth and chosen independently from $\eps$, there exist two constants $c_1, c_2$, not depending on $\varepsilon$, such that:
\begin{equation*}
|d\rho_{\varepsilon}| \leq c_1 |dh_{\varepsilon}|, \quad \text{and} \quad |\Delta \rho_{\varepsilon}| \leq c_2|dh_{\varepsilon}|^2.
\end{equation*}
We claim that our choice of $\cutoff$ satisfies (i) and (ii). For what concerns the first condition we obtain:
\begin{equation*}
\int_{\tub{2\eps}\setminus \tub{\eps}} |\Delta_g \cutoff||d\varphi|^2 dv_g \leq c_2 \int_{\tub{2\eps}\setminus \tub{\eps}} |dh_{\eps}|^2|d\varphi|^2dv_g \leq C_2|\ln(\eps)|\eps^{m-2}.
\end{equation*}
which tends to zero as $\eps$ goes to zero. As for the second condition (ii), by using Cauchy-Schwarz inequality twice and the estimate we have on $|d\cutoff|$, we get:
\begin{align*}
\int_{\tub{2\eps}\setminus \tub{\eps}} (d\cutoff,d\varphi)dv_g
& \leq \int_{\tub{2\eps}\setminus \tub{\eps}} |d\cutoff||d\varphi|dv_g \\
& \leq \left(\int_{\tub{2\eps}\setminus \tub{\eps}} |d\cutoff|^2 dv_g \right)^{\frac 12}
\left(\int_{\tub{2\eps}\setminus \tub{\eps}} |d\varphi|^2 dv_g \right)^{\frac 12} \\
& \leq c'_1 \eps^{\frac m2}\sqrt{|\ln(\eps)|} \left(\int_{\tub{2\eps}\setminus \tub{\eps}} |dh_{\eps}|^2 dv_g \right)^{\frac 12} \\
& \leq c^{''}_1 \eps^{m-1}\sqrt{|\ln(\eps)|}.
\end{align*}
which also tends to zero with $\eps$. \\

\emph{Case 2}: Consider $m=2$. The cut-off function $\cutoff$ will be equal to one outside $\tub{\eps}$ and it will vanish in $\Sigma^{\eps^2}$, for $0<\eps<1$. In this case too $\cutoff$ is obtained by "smoothing" the harmonic function $h_{\eps}$ being equal to one on $\partial \tub{\eps}$ and vanishing on $\partial \tub{\eps^2}$. We will be able to show that the Dirichlet energy of $\eps$ tends to zero when $\eps$ goes to zero as $|\ln(\eps)|^{-1}$. A priori this estimate does not suffices to show (i) and (ii), but only implies that the two integrals are bounded. For this reason we will need to give a more detailed study: we are going to prove that in fact $|d\varphi| \in W^{1,2}(X) \cap L^{\infty}(X)$.
Let $h_{\eps}$ be the harmonic function solving:
\begin{equation*}
\begin{cases}
\Delta_g h_{\varepsilon} =0 \\
h_{\varepsilon} = 1 \mbox{ on } \partial\Sigma^{\eps} \\
h_{\varepsilon} = 0 \mbox{ on } \partial \Sigma^{\eps^{2}}.
\end{cases}
\end{equation*}
We are going to exhibit a test function $f_{\eps}$ such that the Dirichlet energy $\mathcal{E}(f_{\eps})$ is bounded by a constant times $|\ln(\eps)^{-1}|$. Let $r$ be the distance function from $\Sigma$ as above. We define $f_{\eps}$:
\begin{equation*}
\displaystyle f_{\eps}(r)=
\begin{cases}
1 \quad \mbox{ on } X \setminus \tub{\varepsilon} \\
\displaystyle \left( 2- \frac{\ln(r)}{\ln(\eps)} \right) \quad \mbox{ on } \tub{\varepsilon}\setminus \tub{\eps^2} \\
0 \quad \mbox{ on } \tub{\eps^2}.
\end{cases}
\quad |df_{\varepsilon}|= \frac{1}{r|\ln(\eps)|}.
\end{equation*}
We claim that there exists a constant $A$ independent of $\eps$ such that:
\begin{equation}
\label{int1}
\left(\int_{\tub{\eps}\setminus \tub{\eps^2}}|df_{\varepsilon}|^2 dv_g \right)\leq \frac{A}{|\ln(\eps)|}.
\end{equation}
Let us assume that $-\ln(\eps)$ is an integer number $N$. Then we can decompose $\tub{\eps}\setminus \tub{\eps^2}$ in the disjoint union:
\begin{equation*}
\displaystyle 
\Sigma^{\varepsilon} \setminus \Sigma^{\varepsilon^2} = \bigcup_{j=N}^{2N-1} \Sigma^{e^{-j}}\setminus \Sigma^{e^{-(j+1)}}.
\end{equation*}
As a consequence the integral $\eqref{int1}$ can be written as the following sum:
\begin{align*}
\frac{1}{|\ln(\eps)|^2}\left(\int_{\tub{\eps}\setminus \tub{\eps^2}} \frac{1}{r^2} dv_g \right) 
& = \frac{1}{|\ln(\eps)|^2} \sum_{j=N}^{2N-1} \int_{\tub{e^{-j}}\setminus \tub{e^{-(j+1)}}}\frac{1}{r^2} dv_g \\
& \leq \frac{1}{|\ln(\eps)|^2} \sum_{j=N}^{2N-1} \int_{\tub{e^{-j}}\setminus \tub{e^{-(j+1)}}} e^{2(j+1)}dv_g \\
& \leq \frac{1}{|\ln(\eps)|^2} A (N-1) \leq \frac{A}{|\ln(\eps)|}.
\end{align*}
which is the estimate we wanted to prove. Then by the variational characterization of $h_{\eps}$ we have:
\begin{equation*}
\mathcal{E}(h_{\eps}) \leq \mathcal{E}(f_{\eps}) \leq \frac{A}{|\ln(\eps)|}.
\end{equation*}
Furthermore, thanks to our estimate on the behaviour of $d\varphi$ we obtain:
\begin{align*}
\int_{\tub{\eps}\setminus \tub{\eps^2}} |dh_{\eps}|^2 |d\varphi|^2 dv_g
& \leq C|\ln(\eps^2)|\int_{\tub{\eps}\setminus \tub{\eps^2}} |dh_{\eps}|^2dv_g \\
& \leq 2C^2|\ln(\eps)|\int_{\tub{\eps}\setminus \tub{\eps^2}} |df_{\eps}|^2dv_g \\
& \leq 2C^2 |\ln(\eps)| \frac{A}{|\ln(\eps)|} \leq B.
\end{align*}
where $B$ is a positive constant independent of $\eps$.

If we replace $\rho_{\eps}$ by $f_{\eps}$ in $\eqref{IntB1}$ and we let $\eps$ go to zero we then obtain a finite term $B_1$ on the left-hand side; therefore we obtain the following estimate:
\begin{equation*}
B_1 \leq \int_X ((\lambda -(n-1))|d\varphi|^2-|\nabla d\varphi|^2)dv_g.
\end{equation*}
Recall that $\varphi \in \Sob{2}$, so that the $L^{2}$-norm of $|d\varphi|$ is finite. Then the previous inequality tells us that also $|\nabla d\varphi|$ must be in $L^2(X)$, and so $\nabla|d\varphi|$ too, since we have clearly $|\nabla|d\varphi||\leq|\nabla d\varphi|$. As a consequence we have $|d\varphi| \in \Sob{2}$.  This allows us to get more regularity on $|d\varphi|$. \\

\textbf{Claim}: The gradient $d\varphi$ belongs to $L^{\infty}(X)$. 

\begin{proof}
Let us call $u=|d\varphi|$ for simplicity. We state that $u$ satisfies the weak inequality
\begin{equation}
\label{WeakIn}
\Delta_g u\leq cu.
\end{equation}
on the whole $X$. This means that for any $\psi \in W^{1,2}(X)$, $\psi \geq 0$ we have:
\begin{equation}
\label{WeakIn1}
\int_X (du, d\psi)_g dv_g \leq c\int_X u\psi dv_g. 
\end{equation}
We already proved that $\Delta_g u\leq c u$ strongly on $\Omega$, then we know that for any $\psi \in W^{1,2}(X)$ we have:
\begin{equation*}
\int_{\Omega} \psi \Delta_g u dv_g \leq c\int_{\Omega}u\psi dv_g.
\end{equation*}
In order to extend this inequality to the whole $X$ and obtain $\eqref{WeakIn}$, we consider $f_{\eps}$ defined as above, $0 \leq f_{\eps} \leq 1$ and we replace $\psi$ by $f_{\eps}\psi$. By integrating by parts we obtain:
\begin{equation}
\label{WeakIn2}
\int_X (d(f_{\eps}\psi), du)_g dv_g \leq c\int_X f_{\eps}\psi u + \int_X \psi(df_{\eps},du)_g dv_g. 
\end{equation}
We can use Cauchy-Schwarz inequality twice on the second term and obtain:
\begin{equation*}
\begin{split}
\int_X \psi(df_{\eps}, du)_g dv_g 
& \leq B_2 \norm{du}_2\left(\int_{X}|df_{\eps}|^2 dv_g\right)^{\frac 12} \\
& \leq B_3  \norm{du}_2 \frac{1}{\sqrt{|\ln(\eps)|}}.
\end{split}
\end{equation*}
Where we used the estimate $\eqref{int1}$ that we deduced above on the gradient $f_{\eps}$. Since the $L^2$-norm of the Hessian $du = \nabla d\varphi$ is finite, the second term in $\eqref{WeakIn2}$ tends to zero when $\eps$ goes to zero. Then letting $\eps$ go to zero in $\eqref{WeakIn2}$ implies $\eqref{WeakIn1}$, as we wished. 
Since $\eqref{WeakIn}$ is proven, Moser's iteration technique in Proposition 1.8 of \cite{ACM12} assures that $|d\varphi|\in L^{\infty}(X)$.
\end{proof} 

We are finally in the position to show that in codimension $m=2$ a cut-off functions satisfying (i) and (ii) exists: define $\rho_{\eps}=\rho \circ h_{\eps}$ for the same smooth function $\rho$ as before. We have for $c_1, c_2$ independent of $\eps$
\begin{equation*}
|d\cutoff|\leq c_1 |dh_{\eps}| \quad |\Delta_g \cutoff|\leq c_2 |dh_{\eps}|^2. 
\end{equation*}
The estimate on the Dirichlet energy on $h_{\eps}$ and the fact that the $L^{\infty}$-norm of $|d\varphi|$ is finite assures that $\cutoff$ is the desired cut-off function. For the condition (i) we obtain:
\begin{align*}
\int_X |\Delta_g \cutoff||d\varphi| dv_g \leq c'_2 \int_{X}|dh_{\eps}|^2 dv_g \leq \frac{c'_2A}{|\ln(\eps)|}.
\end{align*}
which tends to zero as $\eps$ goes to zero. For the condition (ii) we use Cauchy-Schwarz inequality twice 	and we get:
\begin{align*}
\int_X (d\cutoff, d\varphi)_g dv_g 
& \leq \left(\int_X |d\cutoff|^2 dv_g \right)^{\frac{1}{2}} \left(\int_X|d\varphi|^2\right)^{\frac{1}{2}}\\
& \leq c_1'\eps \left(\int_X |dh_{\eps}|^2 dv_g \right)^{\frac{1}{2}} \\
& \leq \frac{c_1^{''}\eps}{\sqrt{|\ln(\eps)|}}.
\end{align*}
which tends to zero as $\eps$ goes to zero. 

We have found an appropriate cut-off function for any codimension of the singular set $\Sigma$: this concludes the proof of the theorem.
\end{proof}

\begin{rem}
In the discussion above for the choice of the cut-off function in codimension $m>2$ (respectively $m=2$), we obtain $\cutoff$ by smoothing a harmonic function $h_{\eps}$ and by considering  $\rho \circ \psi_{\eps}$ (respectively $\rho \circ f_{\eps}$) because we need a condition on the Laplacian of $\cutoff$. The distance function from $\Sigma$ is not necessarily smooth: we know that almost everywhere $|dr|^2=1$, but we do not have information on the behaviour of its Laplacian. \end{rem}

\begin{rem}
By our definition of admissible stratified space, we are excluding the existence of a stratum  of codimension $2$ whose link is a circle $\s^1_a$, of radius $a$ bigger or equal to one. Recall that the classical Lichnerowicz theorem does not hold for $\s^1_a$, since the first eigenvalue of the Laplacian is equal to $1/a^2<1$: the first iterative step in our proof could not be applied. 
\end{rem}

\section{A bound by below for the Yamabe constant}

The following theorem is inspired by a result by Dominique Bakry in $\cite{Bakry}$, which gives a Sobolev inequality with an explicit constant on smooth compact Riemannian manifolds $(M,g)$ satisfying $Ric_g \geq k(n-1)$, $k>0$.

\begin{theorem}
\label{BakryS}
Let $X$ be an admissible stratified space of dimension $n$. Then for any $\displaystyle 1<p\leq 2n/(n-2)$ a Sobolev inequality of the following form holds:
\begin{equation}
\label{SobP}
V^{1-\frac{2}{p}}\norm{f}_p^2\leq \norm{f}_2^2+\frac{p-2}{nk}\norm{df}_2^2.
\end{equation}
where $V$ is the volume of $X$ with respect to the metric $g$.
\end{theorem}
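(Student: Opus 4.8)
The plan is to follow Bakry's strategy of proving the Sobolev inequality \eqref{SobP} as a one-parameter family, interpolating between the trivial inequality at $p=2$ and the critical exponent $p=2n/(n-2)$. The key analytic input is the Lichnerowicz-type spectral gap from Theorem~\ref{Lich}: on an admissible stratified space with $Ric_g \geq k(n-1)g$, every non-zero eigenvalue of $\Delta_g$ is $\geq kn$. After rescaling we may assume $k=1$ and, after normalizing, $V = \vol_g(X) = 1$, so the target inequality becomes $\norm{f}_p^2 \leq \norm{f}_2^2 + \frac{p-2}{n}\norm{df}_2^2$. I would work with the function $\Phi(p) = \norm{f}_p^2$ for a fixed smooth $f$ (say $f \in C^1_0(\Omega)$, which is dense in $W^{1,2}(X)$), and set up a differential inequality in $p$ whose integration yields \eqref{SobP}.

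First I would record the base case: at $p=2$ the inequality is an equality, $\norm{f}_2^2 = \norm{f}_2^2$, so it suffices to show that the function $F(p) = \norm{f}_2^2 + \frac{p-2}{n}\norm{df}_2^2 - \norm{f}_p^2$ is non-negative for $p \in (2, 2n/(n-2)]$, and since $F(2)=0$ it is enough to control $F'(p)$ near $p=2$ together with a convexity/monotonicity argument — more precisely, to show $F'(p) \geq 0$ or to bound $\frac{d}{dp}\log\norm{f}_p$ appropriately. Differentiating $\norm{f}_p^p = \int_X |f|^p dv_g$ in $p$ gives $\frac{d}{dp}\norm{f}_p = \frac{1}{p}\norm{f}_p^{1-p}\left(\int |f|^p \ln|f|\, dv_g - \norm{f}_p^p \ln\norm{f}_p\right)$, i.e. an entropy-type quantity. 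The heart of Bakry's argument is then to combine this with the spectral gap applied to a suitable power of $f$ (typically $f^{p/2}$ or $|f|^{(p-1)}\!\operatorname{sgn}f$): testing the inequality $\int |d\psi|^2 \geq n(\norm{\psi}_2^2 - (\int\psi\,dv_g)^2)$ — which is exactly the spectral gap $\lambda_1 \geq n$ in the form $\operatorname{Var}(\psi) \leq \frac1n\norm{d\psi}_2^2$ — with $\psi = |f|^{p/2}$ and manipulating, one obtains precisely the differential inequality that closes the argument upon integration from $2$ to $p$.

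The points requiring care on a stratified space, rather than a smooth manifold, are: (a) that the integration-by-parts identities and the computation of $\int |d(|f|^{p/2})|^2 = \frac{p^2}{4}\int |f|^{p-2}|df|^2$ are legitimate — these hold on $\Omega$ with $g$ smooth, and extend to $X$ because $C^1_0(\Omega)$ is dense in $W^{1,2}(X)$ and the relevant quantities are continuous in the $W^{1,2}$ norm (here one uses the Sobolev embedding $W^{1,2}(X) \hookrightarrow L^{2n/(n-2)}(X)$ from \cite{ACM12} to make sense of $\norm{f}_p$ for all $p$ in range); (b) that the spectral gap is available — this is exactly Theorem~\ref{Lich} with $k=1$; and (c) that the endpoint $p = 2n/(n-2)$ is reached and not merely approached, which follows from continuity of both sides in $p$ on the closed interval. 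I expect the main obstacle to be a purely bookkeeping one: carrying out Bakry's differential-inequality computation cleanly and checking that every manipulation (products of powers of $f$ with $|df|^2$, the entropy term, the Cauchy–Schwarz step pairing $\int |f|^{p/2-1}|df|\cdot|f|^{p/2}$) is justified by density of $C^1_0(\Omega)$ together with the regularity and integrability already established — in particular, that no boundary contribution from a neighbourhood of $\Sigma$ survives, which is ensured because test functions can be taken compactly supported in $\Omega$. Finally I would restore the constants $k$ and $V$ by scaling: replacing $g$ by $kg$ multiplies $\norm{df}_2^2$ and the eigenvalue threshold appropriately, and dividing $f$ through by $V^{1/p}$ restores the factor $V^{1-2/p}$, yielding \eqref{SobP} in full generality.
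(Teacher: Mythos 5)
Your proposal diverges from the paper's proof and, more importantly, its central step does not work as stated. You claim that the spectral gap $\lambda_1 \geq n$, applied to $\psi = |f|^{p/2}$ in the form $\mathrm{Var}(\psi) \leq \tfrac1n \norm{d\psi}_2^2$, yields the differential inequality in $p$ that integrates up to \eqref{SobP}. It does not: the Poincar\'e inequality is strictly weaker than the sharp Sobolev family. Already at the level of the derivative at $p=2$, the family \eqref{SobP} linearizes to a logarithmic Sobolev inequality with constant $\tfrac1n$ (the entropy term you compute), not to the variance inequality; and for $p$ up to $\tfrac{2n}{n-2}$ the inequality with constant $\tfrac{p-2}{n}$ encodes the full curvature--dimension information (indeed it forces $Y(X,[g]) \geq \tfrac{n(n-2)}{4}V^{2/n}$, which no spectral gap alone can give). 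The testing you describe produces $\norm{f}_p^p - \bigl(\int |f|^{p/2}d\mu\bigr)^2 \leq \tfrac{p^2}{4n}\int |f|^{p-2}|df|^2 d\mu$, a Beckner-type bound that does not close to \eqref{SobP}. In the paper the spectral gap is used only as an auxiliary input: combined with the rough Sobolev embedding and Bakry's Lemma 4.1 it gives a \emph{non-sharp} inequality $\norm{f}_{2n/(n-2)}^2 \leq \norm{f}_2^2 + \gamma\norm{df}_2^2$, which after interpolation defines a best constant $\gamma_0$ for each $p < \tfrac{2n}{n-2}$. The sharp bound $\gamma_0 \leq \tfrac{p-2}{n}$ is then obtained by a completely different mechanism: one extracts an extremal function $f$ solving $\gamma_0\Delta_g f + (1+\delta)f = f^{p-1}$, writes $f = u^{\alpha}$, multiplies the equation by $u\Delta_g u$, and runs an integrated Bochner--Lichnerowicz argument with the Ricci bound $Ric_g \geq (n-1)g$, a traceless-Hessian decomposition, and an optimized choice of $\alpha$ and of a parameter $\beta$ making a certain quadratic form non-negative for $1<p<\tfrac{2n}{n-2}$. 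Your plan contains no substitute for this Bochner step, which is where the constant $\tfrac{p-2}{n}$ actually comes from.

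A second, independent gap is your treatment of the singular set. The function to which the Bochner formula must be applied is the extremal function (or, in a semigroup version of Bakry's argument, the evolved function), which is determined by a PDE on all of $X$ and is \emph{not} compactly supported in $\Omega$; density of $C^1_0(\Omega)$ does not dispose of the boundary terms. The bulk of the paper's proof is precisely the justification that these terms vanish: one needs the regularity estimate $\norm{df}_{L^{\infty}(X\setminus\Sigma^{\eps})} \leq C\sqrt{|\ln\eps|}$ from Proposition \ref{reg} (whose hypotheses are supplied by the inductive Lichnerowicz theorem on the links), harmonic cut-off functions with Dirichlet energy $O(\eps^{m-2})$ (or $O(|\ln\eps|^{-1})$ when $\mathrm{codim}\,\Sigma = 2$), and, in the codimension-two case, an intermediate bootstrap showing $|\nabla du| \in L^2$ and then $|du| \in L^{\infty}(X)$ before the cut-off argument can be closed. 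Any correct proof on a stratified space must address this; your sketch does not.
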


The existence of such Sobolev inequality allows us to compute the Yamabe constant of a compact Einstein stratified space, as the following corollary states.

\begin{cor}
\label{YamabeConst}
The Yamabe constant of an admissible stratified space $X$ is bounded by below:
\begin{equation}
Y(X,[g])\geq \frac{nk(n-2)}{4}V^{\frac{2}{n}}
\end{equation}
In particular, if $g$ is an Einstein metric, we have equality.
\end{cor}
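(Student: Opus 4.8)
The plan is to specialize Theorem~\ref{BakryS} to the critical Sobolev exponent $p=\p$ and then reinterpret the resulting inequality in terms of the Yamabe functional, using that the Ricci lower bound of an admissible space controls the scalar curvature.

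First I would put $p=\p$ in \eqref{SobP}. At this exponent one has the elementary identities $1-\tfrac{2}{p}=\tfrac 2n$ and $p-2=\tfrac{4}{n-2}$, so the Sobolev inequality reads
\[
V^{\frac 2n}\norm{f}_{\p}^2\ \le\ \norm{f}_2^2+\frac{4}{nk(n-2)}\norm{df}_2^2
\]
for every $f\in W^{1,2}(X)$. Multiplying through by $\tfrac{nk(n-2)}{4}$ gives
\[
\frac{nk(n-2)}{4}V^{\frac 2n}\norm{f}_{\p}^2\ \le\ \norm{df}_2^2+\frac{nk(n-2)}{4}\norm{f}_2^2 .
\]

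Next I would relate the right-hand side to the Yamabe quotient. Since $(X,g)$ is admissible, $Ric_g\ge k(n-1)g$ on $\Omega$, so taking traces the scalar curvature satisfies $S_g\ge n(n-1)k$, hence $a_nS_g=\frac{n-2}{4(n-1)}S_g\ge\frac{nk(n-2)}{4}$ on $\Omega$. Therefore, for any positive $u\in W^{1,2}(X)$,
\[
\int_X\bigl(|du|^2+a_nS_gu^2\bigr)\,dv_g\ \ge\ \norm{du}_2^2+\frac{nk(n-2)}{4}\norm{u}_2^2\ \ge\ \frac{nk(n-2)}{4}V^{\frac 2n}\norm{u}_{\p}^2 .
\]
Dividing by $\norm{u}_{\p}^2$ and taking the infimum over all such $u$ yields $Y(X,[g])\ge\frac{nk(n-2)}{4}V^{\frac 2n}$.

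Finally, for the equality statement, if $g$ is Einstein then $Ric_g=k(n-1)g$ forces $S_g\equiv n(n-1)k$, so $a_nS_g\equiv\frac{nk(n-2)}{4}$ and the chain of inequalities above becomes an equality of integrands when evaluated on the constant function $u\equiv 1$, which belongs to $W^{1,2}(X)$ because $X$ is compact. For $u\equiv 1$ one has $du=0$, $\norm{1}_2^2=V$ and $\norm{1}_{\p}^2=V^{\frac{n-2}{n}}$, so the Yamabe quotient equals $\frac{nk(n-2)}{4}\,V\cdot V^{-\frac{n-2}{n}}=\frac{nk(n-2)}{4}V^{\frac 2n}$; combined with the lower bound this forces equality. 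I do not expect a genuine obstacle here: the corollary is a formal consequence of Theorem~\ref{BakryS} plus the fact that constants are admissible test functions on a compact stratified space. The only points deserving a word of care are the arithmetic identities at the critical exponent and the passage from $Ric_g\ge k(n-1)g$ to $S_g\ge n(n-1)k$ by taking traces on $\Omega$.
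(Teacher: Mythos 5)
Your proposal is correct and follows essentially the same route as the paper: specialize Theorem~\ref{BakryS} to $p=\p$, use $Ric_g\ge k(n-1)g\Rightarrow a_nS_g\ge\frac{nk(n-2)}{4}$ to dominate the Yamabe functional by the Sobolev inequality, and in the Einstein case observe that the bound is attained. The paper phrases the equality step via the Yamabe quotient $Q(g)$ of the background metric over the conformal class, but that is the same computation as your evaluation of the functional at $u\equiv 1$.
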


\begin{proof}
Recall that the Yamabe constant of $X$ is defined by
\begin{equation*}
 Y(X,[g])=\inf_{u \in W^{1,2}(X), u\neq 0} \frac{\displaystyle \int_X (|du|^2+a_n S_g u^2)dv_g}{\norm{u}_{\frac{2n}{n-2}}^2}.
\end{equation*}
where $a_n=\frac{n-2}{4(n-1)}$ and $S_g$ is the scalar curvature. Since $Ric_g\geq k(n-1)g$, we have $S_g\geq kn(n-1)$, and as a consequence 
\begin{equation*}
a_n S_g \geq \frac{nk(n-2)}{4}. 
\end{equation*}
We denote this constant by $\gamma^{-1}$. Remark that if we take $p=\frac{2n}{n-2}$ in the previous theorem, $\gamma$ is exactly the constant appearing in the right-hand side of the Sobolev inequality. Then for any $u\in W^{1,2}(X)$ we have:
\begin{equation*}
\frac{V^{\frac{2}{n}}}{\gamma}\norm{u}_{\frac{2n}{n-2}}^2\leq \norm{du}^2_2+\frac{1}{\gamma}\norm{u}_2^2 \leq \norm{du}^2_2 + \int_X a_nS_g |du|^2dv_g.
\end{equation*}
and this easily implies the desired bound by below on the Yamabe constant. 

Recall that an equivalent definition for the Yamabe constant is the following:
\begin{equation*}
\displaystyle Y(X,[g]) =\inf_{\tilde{g} \in [g]} Q(\tilde{g}), \quad Q(\tilde{g})=\frac{\int_X S_{\tilde{g}}dv_{\tilde{g}}}{\vol_{\tilde{g}}(X)^{1-\frac{2}{n}}}.
\end{equation*}
Where $[g]$ is the conformal class of $g$, consisting of all the metrics that can be written as $\tilde{g}=u^{\frac{4}{n-2}}g$ for some function $u\in C^{1}_0(\Omega)$. We call $Q(\tilde{g})$ the Yamabe quotient of $\tilde{g}$. When we consider an Einstein metric $g$ on an admissible stratified space, its Yamabe quotient attains exactly 
\begin{equation}
Q(g)=\frac{n(n-2)}{4}\vol_{g}(X)^\frac 2n
\end{equation}
since the scalar curvature of $g$ is constant and equal to $n(n-1)$. Thanks to our lower bound and the fact that the Yamabe constant is an infimum, we get the case of equality in the Einstein case. 
\end{proof}

We are now going to give the proof of theorem.

\begin{proof}
We can always rescale the metric in order to have $k=1$. By Theorem $\ref{Lich}$, we know that the first non-zero eigenvalue of the Laplacian is greater than $n$; moreover, as we recalled in Section 1, the Sobolev's inequality holds on $X$ (see Proposition 2.2 in \cite{ACM12}). From now on, we are using in our calculations the renormalized measure $d\mu=V^{-1}dv_g$, where $V=Vol_g(X)$.

The lower bound on the spectrum of the Laplacian, the Sobolev's inequality and Lemma 4.1 in \cite{Bakry} imply that there exists a positive constant $\gamma$ such that
\begin{equation*}
\norm{f}_{\frac{2n}{n-2}}^2\leq \norm{f}_2^2+\gamma \norm{df}_2^2.
\end{equation*}
By using interpolation between $2$ and $\frac{2n}{n-2}$, it is easy to see that for any $p<\frac{2n}{n-2}$ and for any $\delta>0$ we have the following inequality:
\begin{equation*}
\norm{f}_p^2 \leq (1+\delta)\norm{f}_2^2+\gamma_0\norm{df}_2^2
\end{equation*}
We denote by $\gamma_0$ the best constant appearing in the previous inequality. We are going to show that $\gamma_0$ is smaller then $(p-2)/n$, for any choice of $\delta>0$. By coming back to the measure $dv_g$, we will get the power $1-2/p$ of the volume and therefore the inequality $\eqref{SobP}$ will hold on X.\\
Consider a minimizing sequence for $\gamma_0$, i.e. a sequence of positive functions $(f_n)_n$ in $L^p(X)$ such that the quotient 
\begin{equation*}
\frac{\norm{f_n}_p^2-(1+\delta)\norm{f_n}_2^2}{\norm{df_n}_2^2}
\end{equation*}
converges to $\gamma_0$. We can assume without loss of generality that $||f_n||_2=1$. Then $(f_n)_n$ is bounded in $L^p(X)$ and by the compact embedding of $W^{1,2}(X)$ in $L^p(X)$ we can deduce that there exists a positive function $f$ in $W^{1,2}(X)$ such that $(f_n)_n$ converges weakly to $f$ in $W^{1,2}(X)$, and strongly in $L^p(X)$. Thanks to the normalization of the $L^2$-norm, $f$ is not vanishing everywhere, and thanks to the the choice of $\delta>0$, $f$ cannot be constant. Moreover, it satisfies the following equation on $X$:
\begin{equation}
\label{eqSob}
\gamma_0\Delta_g f+(1+\delta)f=f^{p-1}.
\end{equation}
We can apply the Moser iteration technique as in Proposition 1.8 in \cite{ACM12}, in order to show that $f$ is bounded. Since the Ricci tensor is bounded by below, we can apply the same technique we used in Lemma $\ref{L1}$ to show that $\Delta_g|df|$ is smaller or equal than $c|df|$ on $\Omega$ for some positive constant $c$. Furthermore, Theorem $\ref{Lich}$ assures that the condition $\lambda_1(S_x) \geq (n-1)$ holds for any $x \in X$, so that we can apply Proposition $\ref{reg}$ to $f$. Then, for any $\eps>0$ we have:
\begin{equation*}
\norm{df}_{L^{\infty}(X\setminus \Sigma^{\eps})} \leq C\sqrt{|\ln(\eps)|}.
\end{equation*}
We can express $f$ as the power of a function $u$, i.e. $f=u^{\alpha}$ for some $\alpha$ that will be chosen later. Then $u$ is also positive, bounded and its gradient satisfies the same estimate as $|df|$ away from a neighbourhood of the singular set $\Sigma$. \\
We can rewrite $\eqref{eqSob}$ in the form:
\begin{equation}
\label{eqSob1}
u^{\alpha(p-2)}=(1+\delta)+\gamma_0\frac{\Delta_g(u^{\alpha})}{u^{\alpha}}=(1+\delta)+\alpha \gamma_0 \left( \frac{\Delta_g u}{u}-(\alpha-1)\frac{|du|^2}{u^2} \right)
\end{equation}
Bakry's proof consists in multiplying this equation for an appropriate factor, and then by integrating it. He finds a factor depending on $\gamma_0^{-1}, p$ and $n$, multiplies by the $L^2$-norm of $du$, and he bounds it by below by some quantity, which is positive when $\alpha$ is well-chosen. We will proceed in a similar way, by taking care of introducing  a cut-off function, because we are allowed to use the equation $\eqref{eqSob1}$ and integration by parts only on the regular set $\Omega$. \\
If the codimension $m>2$ of $\Sigma$ is strictly greater than 2, consider the cut-off function $\rho_{\varepsilon}$ chosen in the proof of Theorem $\ref{Lich}$. If $m=2$ consider the function $f_{\eps}$ defined in the same proof. We must be careful with the codimension $m=2$, since we are still not sure that the Hessian of $u$ is in $L^2(X)$: we will be able to affirm it later in the proof. 
For $m>2$ we multiply $\eqref{eqSob1}$ by $\rho_{\varepsilon}u\Delta_g u$ and integrate on $X$; respectively for $m=2$ we multiply by $f_{\eps}u\Delta_g u$. For simplicity we write down the computations only for $\cutoff$: they are exactly the same for $f_{\eps}$. 
\begin{equation}
\label{Bakry1}
\begin{split}
\int_X \rho_{\varepsilon}u^{1+\alpha(p-2)}\Delta_g u d\mu &= 
(1+\delta)\int_X \rho_{\varepsilon}u\Delta_gu \\
&+\gamma_0\alpha\left(\int_X\rho_{\varepsilon}(\Delta_g u)^2d\mu 
-(\alpha-1)\int_X\rho_{\varepsilon}\frac{\Delta_g u}{u}|du|^2d\mu\right).
\end{split}
\end{equation}
When integrating by parts the left-hand side we obtain:
\begin{align*}
\int_X \rho_{\varepsilon}u^{1+\alpha(p-2)}\Delta_g u d\mu &= \int_X u^{1+\alpha(p-2)} (d\rho_{\varepsilon},du)_g d\mu \\
& + (1+\alpha(p-2))\int_X \rho_{\varepsilon}u^{\alpha(p-2)}|du|^2d\mu.
\end{align*}
Since $u$ is positive and bounded, we can bound $u^{1+\alpha(p-2)}$ by a positive constant independent of $\eps$. Then first term, which contains $(du,d\rho_{\varepsilon})_g$, tends to zero as $\eps$ goes to zero as we have shown in the proof of Theorem $\ref{Lich}$. This is true also for $m=2$ when we replace $\cutoff$ by the function $f_{\eps}$. In the second term we will replace $u^{\alpha(p-2)}$ by its value given by $\eqref{eqSob1}$. \\
As for the right-hand side of $\eqref{Bakry1}$, consider the first term:
\begin{equation*}
\int_X \rho_{\varepsilon}(u\Delta_gu) d\mu= \int_X u (du,d\rho_{\varepsilon})_g d\mu+\int_X \rho_{\varepsilon}|du|^2d\mu.
\end{equation*}
and when we let $\varepsilon$ tends to zero, since as before $u$ is bounded, we simply get the $L^2$-norm of $du$, both for the case $m >2$ and $m=2$.
\\
Therefore, after some elementary computation we obtain:
\begin{equation}
\label{ega}
\begin{split}
\frac{1+\delta}{\gamma_0}(p-2) \int_X \rho_{\varepsilon}|du|^2d\mu &= \int_X \rho_{\varepsilon} (\Delta_g u)^2d\mu \\
&+ (\alpha-1)(1+\alpha(p-2)) \int_X \cutoff \frac{|du|^4}{u^2}d\mu \\
&-\alpha(p-1) \int_X \cutoff \frac{\Delta_g u}{u}|du|^2d\mu + o(1).
\end{split}
\end{equation}
where we replaced the two terms containing $du$ and $d\cutoff$ by a term $o(1)$ which tends to zero as $\eps$ goes to zero. 
Let us denote:
\begin{align*}
I_1&= \int_X \rho_{\varepsilon} (\Delta_g u)^2dv_g. \\
I_2&= \int_X \cutoff \frac{\Delta_g u}{u}|du|^2dv_g.
\end{align*}
We are going to bound by below $I_1$ by integrating the Bochner-Lichnerowicz formula, which holds on the regular set $\Omega$, and to give an alternative expression for $I_2$ by integrating by parts. \\
Consider firstly $I_1$. We multiply the Bochner-Lichnerowicz formula
\begin{equation*}
(du,d\Delta_g u)_g=\Delta_g \frac{|du|^2}{2}+|\nabla du|^2+Ric_g(du,du) \mbox{ on } \Omega.
\end{equation*}
by the cut-off function $\cutoff$ and integrate. Recall that by hypothesis we have $Ric_g\geq (n-1)g$. \\
By rewriting $\cutoff(du,d\Delta_g u)_g= (du, d(\cutoff\Delta_gu))_g- \Delta_g u(du, d\cutoff)_g$ and integrating by parts, we then obtain:
\begin{equation}
\label{BakryI_1}
\begin{split}
\int_X\cutoff(\Delta_g u)^2d\mu \geq & \int_x \cutoff (|\nabla du|^2 dv_g+(n-1)|du|^2)d\mu 
\\ &  + \int_X \Delta_g\cutoff \frac{|du|^2}{2}d\mu +\int_X \Delta_g u (du,d\cutoff)_gd\mu.
\end{split}
\end{equation}
Remark that thanks to $\eqref{eqSob1}$, and the fact that $u$ is bounded, we know that $\Delta_g u$ can be split in the sum of a bounded term and a second term depending on $|du|^2$: it is equal to
\begin{equation*}
\Delta_g u= \frac{1}{\alpha}u\left(\frac{\Delta_g f}{u^{\alpha}}+\alpha(\alpha-1)\frac{|du|^2}{u^2}\right).
\end{equation*}
We know that $u$ is strictly positive and bounded, then the same is true for $u^{-1}$, and $\Delta_g f$ is bounded too. Furthermore, Remark $\ref{lp}$ in Section 1, assures that $|du|\in L^p(X)$, for all $p\in[1; +\infty)$. As a consequence $\Delta_g u$ also belongs to $L^{p}(X)$ for $p\in[1; +\infty)$. Then we can bound the last term in $\eqref{BakryI_1}$ by using Cauchy-Schwarz inequality:
\begin{equation*}
\int_X \cutoff\Delta_g u (du,d\cutoff)_gd\mu \leq \left(\int_X(\cutoff\Delta_gu)^2d\mu\right)^{\frac 12} \left(\int_X (du,d\cutoff)^2_gd\mu\right)^{\frac 12}. 
\end{equation*} 
where the first factor is finite, and the second one tends to zero as $\eps$ goes to zero. Then the last term in $\eqref{BakryI_1}$ tends to zero as $\eps$ goes to zero. 
As for the term containing $\Delta_g \cutoff$, we know that for $m>2$ this tends to zero as $\eps$ goes to zero. For $m=2$ and $f_{\eps}$, we only have that this quantity is bounded: but thanks to $\eqref{BakryI_1}$ this is enough to state that the $L^{2}$-norm of $\nabla du$ is finite. As in the proof of Theorem $\ref{Lich}$, this implies $|du|\in L^{\infty}(X)$ and there exists $\cutoff$, vanishing on $\tub{\eps^2}$, being equal to one away from $\tub{\eps}$, and $0 \leq \cutoff \leq 1$ on $\tub{\eps}\setminus \tub{\eps^2}$, such that the integrals of both $(du,d\cutoff)_g$ and $\Delta_g\cutoff |du|^2 $ tend to zero as $\eps$ goes to zero. From now on we are allowed to consider the cut-off function $\cutoff$ instead of $f_{\eps}$ also in the case of $m=2$. 

We can modify $\eqref{BakryI_1}$ a bit more. We decompose the Hessian $\nabla du$ in its traceless part $A$ plus $-(\Delta_g u / n)g$, since $\Delta_g u= -\mbox{tr}(\nabla du)$. Then the square norm of $\nabla du$ is equal to $|A|^2+(\Delta_gu)^2/n$, and therefore we get:
\begin{equation}
\label{I_1}
\int_X\cutoff(\Delta_g u)^2d\mu \geq \frac{n}{n-1}\int_X \cutoff |A|^2d\mu + n\int_X \cutoff |du|^2 d\mu + o(1).
\end{equation}
This will be the appropriate bound by below for $I_1$. \\
Now consider $I_2$ and integrate by parts:
\begin{equation*}
I_2= 2\int_X \cutoff\frac{\nabla du (du,du)}{u}d\mu
-\int_X \cutoff\frac{|du|^4}{u^2}d\mu
+ \int_X \frac{|du|^2}{u}(d\cutoff, du)_g d\mu.
\end{equation*}
With the same observations as before ($|du|\in L^{p}(X)$ for all $p \in[1+\infty)$ and Cauchy-Schwarz inequality), we can say that the last term in this expression tends to zero as $\eps$ goes to zero. We can decompose again the Hessian $\nabla du$ in $\nabla du = A -\frac{\Delta_gu}{n}g$. As a consequence we can write:
\begin{equation}
\label{I_2}
I_2 =
\frac{2n}{n+2} \int_X \cutoff\frac{A(du,du)}{u}d\mu-
\frac{n}{n+2} \int_X \cutoff \frac{|du|^4}{u^2}d\mu +o(1).
\end{equation}
We can now replace this expression for $I_2$ and the bound by below $\eqref{I_1}$ for $I_1$ in $\eqref{ega}$; after passing to the limit as $\varepsilon$ and $\delta$ go to zero we obtain:
\begin{equation}
\label{inéga}
\begin{split}
\left(\frac{1}{\gamma_0}(p-2)-n\right) \int_X |du|^2d\mu& \geq \frac{n}{n-1}\int_X |A|^2d\mu \\
& - \alpha(p-1)\frac{2n}{n+2}\int_X \frac{A(du,du)}{u}d\mu \\
& +C(\alpha) \int_X \frac{|du|^4}{u^2}d\mu.
\end{split}
\end{equation}
where 
\begin{equation*}
C(\alpha)=(\alpha-1)(1+\alpha(p-2))+\alpha(p-1)\frac{n}{n+2}.
\end{equation*}
The first two terms in the left-hand side of $\eqref{inéga}$ can be interpreted as a part of a square norm for some convenient coefficient: we can rewrite in fact
\begin{equation*}
\begin{split}
\left(\frac{1}{\gamma_0}(p-2)-n\right) \int_X |du|^2d\mu& \geq \frac{n}{n-1}\left(\int_X \left|A+\beta\frac{du\otimes du}{u}\right|^2d\mu \right) \\
&+\left(C(\alpha)-\beta^2 \frac{n}{n-1}\right) \int_X \frac{|du|^4}{u^2}d\mu.
\end{split}
\end{equation*}
where we have chosen:
\begin{equation*}
\beta= -\alpha(p-1)\frac{n-1}{n+2}
\end{equation*}
We denote by $T=\frac{du \otimes du}{u}$. Then, recalling that $A$ is traceless, we have
\begin{equation*}
|A+\beta T|^2\geq \frac{1}{n}\mbox{tr}(A+\beta T)^2=\frac{\beta^2}{n}\frac{|du|^4}{u^2}.
\end{equation*}
Replacing this in the previous inequality, we finally get:
\begin{equation}
\label{inéga1}
\left(\frac{1}{\gamma_0}(p-2)-n \right) \int_X |du|^2d\mu \geq (C(\alpha)-\beta^2)\int_X \frac{|du|^4}{u^2}d\mu.
\end{equation}
We remark that $C(\alpha)-\beta^2$ is a quadratic expression in $\alpha$. Its discriminant equals:
\begin{equation*}
-\frac{4n(p-1)((n-2)p-2n)}{(n+2)^2}
\end{equation*}
which is positive for $1<p<\frac{2n}{n-2}$. Therefore, thanks to our hypothesis, we can choose $\alpha$ in such a way that the right-hand side of $\eqref{inéga1}$ is a positive quantity. As a consequence we get for any $1<p< \frac{2n}{n-2}$:
\begin{equation*}
\frac{1}{\gamma_0} \geq \frac{n}{p-2}.
\end{equation*}
which gives the desired Sobolev inequality. We can pass to the limit as $p$ tends to $\frac{2n}{n-2}$ and get the result for $\frac{2n}{n-2}$ too.

\end{proof}

\subsection{Some examples}
Consider an admissible stratified space $(Z^d,k)$  of dimension $d$ with Einstein metric $k$. Thanks to Lemma $\ref{ConfEquiv}$, we know that $X=\R^{n-d-1}\times Z$ with the metric $g=dy^2+dr^2+r^2k$ is conformally equivalent to $(C(S),[dt^2+\cos^2(t)h])$, where:
\begin{align*}
S &=\left[0, \frac{\pi}{2} \right]\times \s^{n-d-2} \times Z \\
h &= d\varphi^2+\cos^2(\varphi)g_{\s^{n-d-2}}+\sin^2(\varphi)k.
\end{align*}
Moreover, $C(S)$ is an admissible stratified space endowed with an Einstein metric $g_c=dt^2+\cos^2(t)h$. Corollary $\ref{YamabeConst}$ states that its Yamabe constant will be equal to:
\begin{equation}
\label{Yc_ex}
Y(X,[g])=Y(C(S),[g_c])=\frac{n(n-2)}{4}\vol_{g_c}(C(S))^{\frac{n}{2}}.
\end{equation}
We claim that:

\begin{lemma}
\label{LYe}
Let $(Z^d,k)$ be an admissible stratified space of dimension $d$ with Einstein metric $k$. Then the Yamabe constant of $X=\R^{n-d-1}\times Z$ endowed with the metric $g$ as above is equal to:
\begin{equation}
Y(X,[g])= \left( \frac{\vol_k(Z)}{\vol(\s^{d})} \right)^{\frac{2}{n}} Y_n.
\end{equation}
\end{lemma}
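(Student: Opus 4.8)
The plan is to reduce the statement entirely to a volume computation, since the hard analytic work is already contained in Corollary~\ref{YamabeConst}. First I would recall, exactly as in the discussion preceding the statement, that by Lemma~\ref{ConfEquiv} the space $(X,g)=(\R^{n-d-1}\times Z,\,dy^2+dr^2+r^2k)$ is isometric to the cone $(C(S),\,dr^2+r^2h)$ and conformal to the compact admissible stratified space $(C(S),g_c)$ with $g_c=dt^2+\cos^2(t)h$ Einstein, $Ric_{g_c}=(n-1)g_c$. Since the Yamabe constant is a conformal invariant, $Y(X,[g])=Y(C(S),[g_c])$, and Corollary~\ref{YamabeConst} in the Einstein case (with $k=1$) then gives
\begin{equation*}
Y(X,[g])=\frac{n(n-2)}{4}\,\vol_{g_c}(C(S))^{\frac{2}{n}},
\end{equation*}
which is precisely \eqref{Yc_ex}. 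So the whole statement follows once we identify $\vol_{g_c}(C(S))$.

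Next I would compute this volume by Fubini, exploiting the iterated warped-product structure. Writing $C(S)=(-\tfrac{\pi}{2},\tfrac{\pi}{2})\times S$ (up to the obvious reparametrization), the metric $g_c$ is a warped product over the interval with fibre $(S,h)$, and on $S=[0,\tfrac{\pi}{2}]\times\s^{n-d-2}\times Z$ the metric $h=d\varphi^2+\cos^2(\varphi)g_{\s^{n-d-2}}+\sin^2(\varphi)k$ is a doubly warped product with fibres $\s^{n-d-2}$ and $Z$. Multiplying the corresponding volume densities (using $\dim\s^{n-d-2}=n-d-2$, $\dim Z=d$ and $n=(n-d-1)+1+d$) yields
\begin{equation*}
\vol_{g_c}(C(S))=\Big(\int_{-\pi/2}^{\pi/2}\!\cos^{n-1}(t)\,dt\Big)\Big(\int_{0}^{\pi/2}\!\cos^{n-d-2}(\varphi)\sin^{d}(\varphi)\,d\varphi\Big)\vol(\s^{n-d-2})\,\vol_k(Z),
\end{equation*}
so that $\vol_{g_c}(C(S))=c(n,d)\,\vol_k(Z)$ for a constant $c(n,d)$ depending only on $n$ and $d$.

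The key point — and the only slightly delicate step — is to identify $c(n,d)$ with $\vol(\s^n)/\vol(\s^d)$, which I would do by specialization rather than by evaluating Wallis-type integrals. If $Z=(\s^d,g_{\s^d})$, then $h$ is exactly the round metric on $\s^{n-1}$, being the metric of the spherical join $\s^{n-d-2}\ast\s^{d}=\s^{n-1}$, and $g_c$ is exactly the round metric on $\s^n$; the very same formula then reads $\vol(\s^n)=c(n,d)\,\vol(\s^d)$. Dividing the two identities gives
\begin{equation*}
\vol_{g_c}(C(S))=\frac{\vol_k(Z)}{\vol(\s^d)}\,\vol(\s^n).
\end{equation*}
Substituting back and using $Y_n=\tfrac{n(n-2)}{4}\vol(\s^n)^{2/n}$ (itself Corollary~\ref{YamabeConst} applied to the round $\s^n$, or Aubin's classical value) yields
\begin{equation*}
Y(X,[g])=\frac{n(n-2)}{4}\Big(\frac{\vol_k(Z)}{\vol(\s^d)}\Big)^{\frac{2}{n}}\vol(\s^n)^{\frac{2}{n}}=\Big(\frac{\vol_k(Z)}{\vol(\s^d)}\Big)^{\frac{2}{n}}Y_n,
\end{equation*}
as claimed. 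I expect no genuine obstacle beyond bookkeeping: checking the dimension counts, making sure Lemma~\ref{ConfEquiv} is applied to the correct compact (suspension) model of $C(S)$ so that Corollary~\ref{YamabeConst} genuinely applies, and verifying that the round-sphere specialization really produces the standard metrics on $\s^{n-1}$ and $\s^{n}$.
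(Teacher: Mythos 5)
Your proposal is correct and follows essentially the same route as the paper: reduce to Corollary \ref{YamabeConst} via Lemma \ref{ConfEquiv}, compute $\vol_{g_c}(C(S))$ from the iterated warped-product structure, and identify the resulting constant by comparing with the analogous decomposition of the round sphere (your ``specialization to $Z=\s^d$'' is exactly the paper's explicit formula for $\vol(\s^n)$ as a cone over a spherical suspension, so the integrals cancel in the ratio). The only differences are cosmetic (parametrizing the cone over $(-\pi/2,\pi/2)$ rather than $(0,\pi)$, and stating the cancellation abstractly rather than writing out the Wallis-type integrals).
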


\begin{proof}
Recall that the Yamabe constant of the sphere $Y_n$ is equal to 
\begin{equation*}
Y_n=\frac{n(n-2)}{4}\vol(\s^n)^{\frac 2n}.
\end{equation*}
Then by our expression above we have:
\begin{equation*}
Y(X,[g])=Y_n\left(\frac{\vol_{g_c}(C(S))}{\vol(\s^n)} \right)^{\frac{2}{n}}. 
\end{equation*}
Now the volume of $C(S)$ with respect to $g_c$ is clearly equal to:
\begin{equation*}
\vol_{g_c}(C(S))=2\vol_h(S) \int_{0}^{\frac{\pi}{2}}\cos^{n-1}(t)dt. 
\end{equation*}
and the volume of $S$ with respect to $h$ is:
\begin{equation*}
\vol_h(S)=\vol(\s^{n-d-3})\vol_k(Z)\int_{0}^{\frac{\pi}{2}} \cos^{n-d-2}(\varphi) \sin^d (\varphi) d\varphi.
\end{equation*}
By using polar coordinates, the sphere $\s^n$ can be viewed as the cone over the $(n-d-3)$-fold spherical suspension of $\s^d$, so that we can express its volume in the following form:
\begin{equation*}
\vol(\s^n)=2 \vol(\s^{n-d-3})\vol(\s^d)\int_{0}^{\frac{\pi}{2}} \cos^{n-d-2}(\varphi) \sin^d (\varphi) d\varphi \int_{0}^{\frac{\pi}{2}}\cos^{n-1}(t)dt.
\end{equation*}
Finally by replacing the two expressions for the volumes of $C(S)$ and $S^n$ we get the desired value of $Y(X,[g])$. 
\end{proof}

\textbf{Example}:
In the simplest case of $Z$ being a circle of radius $a<1$, $C(S^1_a)$ is a cone of angle $\alpha= 2\pi a$. A similar calculation leads to:
\begin{equation*}
Y(\R^{n-2}\times C(S^1_a), [g])= a^{\frac{2}{n}} Y_n=\left(\frac{\alpha}{2\pi}\right)^{\frac{2}{n}}Y_n.
\end{equation*}
Observe that this procedure cannot be applied if $Z$ is a circle with radius bigger than one, since we excluded the existence of codimension 2 strata with link of diameter bigger that $\pi$. In the next section we are going to give another way to compute the Yamabe constant of this kind of stratified spaces.

\begin{rem}
Lemma $\ref{LYe}$ extends a result by J. Petean about the Yamabe constant of cones. The author shows in \cite{Petean} that if $M$ is a compact manifold of dimension $n$, endowed with a Riemannian metric such that $Ric_g=(n-1)g$, then the Yamabe constant of the cone $C(M)=(0,\pi)\times M$ endowed with the cone metric $dt^2+\sin^2(t)g$ is equal to:
\begin{equation*}
Y(C(M), [dt^2+\sin^2(t)g])=\left(\frac{\vol_g(M)}{\vol(\s^n)}\right)^{\frac{2}{n+1}}Y_{n+1}.
\end{equation*}
If the spherical suspension $S$ were a compact smooth manifold, our computation would give exactly the same result. 
\end{rem}

\section{Cones of angle $\alpha \geq 2\pi$}
Let $(X^n,g)$ be a stratified space with one singular stratum $X_{n-2}$ of codimension $2$: we assume that its link is the circle $\s^1_a$ of radius $a>1$ and then that the cone $C(\s^1_a)$ has angle $\alpha=a2\pi\geq 2\pi$. Such stratified space does not belong to the class of admissible stratified spaces we defined above, and Theorem $\ref{Lich}$ does not hold on it. As a consequence, we cannot apply Corollary $\ref{YamabeConst}$ in order to compute its local Yamabe constant, or, equivalently, the Yamabe constant of $\R^{n-2}\times C(\s^1_a)$. \\
We are going to follow another strategy: we will study the isoperimetric profile of $X=\R^{n-2}\times C(\s^1_a)$, i.e. given a metric $g$ on $X$ we study the function $I_g$:
\begin{equation*}
I_g(v)=\inf \{ \vol_g(\partial E), \mbox{ \\ } E \subset X, \vol_g(E) =v \}.
\end{equation*}
An Euclidean isoperimetric inequality holds on $X$ if there exists a positive constant $c$ such that 
\begin{equation}
\label{IsopIn0}
I(v)\geq c v^{1-\frac{1}{n}}
\end{equation}
In the Euclidean space $\R^n$, the constant $c$ is given by the isoperimetric quotient of euclidean balls. Moreover, it is a well known result that the isoperimetric inequality in $\R^n$ is equivalent to the following Sobolev inequality: for any $n>1$ and $f\in W^{1,1}(\R^n)$
\begin{equation*}
\norm{f}_{q}\leq C \norm{df}_1, \quad q=\frac{n}{n-1}.
\end{equation*}
It is also possible to compute the explicit value for the optimal constant appearing in this inequality. Moreover, this inequality leads to the sharp inequalities for $1 \leq p <n$ see for example $\cite{Talenti}$):
\begin{equation}
\norm{f}_q \leq C_{n,p} \norm{df}_p, \quad q=\frac{np}{n-p}
\end{equation}
In the following, we are going to show that the isoperimetric profile of $X$ (with the appropriate metric) coincides with the isoperimetric profile of $\R^n$. This will give in turn a sharp Sobolev inequality and then the value of the Yamabe constant of $X=\R^{n-2}\times C(\s^1_a)$, for $a\geq 1$.

\subsection{Approaching $C(\s^1_a)$ with Cartan-Hadamard manifolds}
We are going to find a metric $h_{\eps}$ on $\R^2$, conformal to the Euclidean metric, that converges to a metric $h$ on $\R^2$ with one conical singularity, which is in turn isometric to $C(S^1_a)$ endowed with the metric $dr^2+(ar)^2d\theta^2$.

\begin{lemma}
\label{SeqMet1}
There exists a sequence of metrics $h_{\varepsilon}$ on $\R^2$, conformal to the Euclidean metric, with negative sectional curvature, such that $h_{\varepsilon}$ converges uniformly on any compact domain of $\R^2 \setminus \{ 0\}$ to the cone metric on $C(\s^1_a)$ with $a\geq 1$.
\end{lemma}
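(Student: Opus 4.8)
The plan is to realise the cone metric as an explicitly conformally flat metric on $\R^2\setminus\{0\}$ and then to regularise its conformal factor near the origin. The first step is the change of variable $r=\rho^{a}/a$ on $(0,+\infty)$, under which $dr=\rho^{a-1}d\rho$ and $(ar)^2=\rho^{2a}$, giving the identity
\[
dr^2+(ar)^2d\theta^2=\rho^{2(a-1)}\bigl(d\rho^2+\rho^2d\theta^2\bigr).
\]
Writing $g_0$ for the Euclidean metric and $\rho=|z|$, this says that the punctured cone $C(\s^1_a)\setminus\{0\}$ is isometric to $(\R^2\setminus\{0\},\,h)$ with $h=\rho^{2(a-1)}g_0$; this $h$ will be the limit metric. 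For $a=1$ we simply have $h=g_0$ and there is nothing to prove, so the content lies in the case $a>1$, where the weight $\rho^{2(a-1)}$ is continuous but fails to be $C^2$ at the apex.

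Next I would define the approximating metrics by smoothing the weight: set
\[
h_\eps=\bigl(\rho^2+\eps^2\bigr)^{a-1}\bigl(d\rho^2+\rho^2d\theta^2\bigr)=e^{2\varphi_\eps}\,g_0,\qquad \varphi_\eps=\tfrac{a-1}{2}\ln\bigl(\rho^2+\eps^2\bigr).
\]
Since $\rho^2+\eps^2\ge\eps^2>0$, the function $\varphi_\eps$ is smooth on all of $\R^2$, so each $h_\eps$ is a genuine smooth Riemannian metric on $\R^2$, conformal to $g_0$ by construction.

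The third step is the sign of the curvature. Recalling that a metric $e^{2\varphi}g_0$ on a planar domain has Gaussian curvature $-e^{-2\varphi}\Delta\varphi$, with $\Delta$ the flat Laplacian, a direct computation on the radial function $\varphi_\eps$ (using $\Delta=\partial_\rho^2+\rho^{-1}\partial_\rho$) yields
\[
\Delta\varphi_\eps=(a-1)\,\frac{2\eps^2}{(\rho^2+\eps^2)^2}.
\]
Because $a\ge1$ the right-hand side is non-negative, and strictly positive everywhere when $a>1$; hence $h_\eps$ has non-positive sectional curvature, strictly negative for $a>1$. It is exactly here that the hypothesis $a\ge1$ enters: for $a<1$ the same regularisation would produce positive curvature, in keeping with $C(\s^1_a)$ being an Alexandrov space of non-negative curvature in that range.

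Finally, for convergence: on any compact $\mathcal K\subset\R^2\setminus\{0\}$ one has $0<\rho_0\le\rho\le\rho_1<+\infty$, so $(\rho^2+\eps^2)^{a-1}\to\rho^{2(a-1)}$ uniformly on $\mathcal K$ as $\eps\to0$, and likewise for the $d\theta^2$-coefficient $(\rho^2+\eps^2)^{a-1}\rho^2$; thus $h_\eps\to h$ uniformly on $\mathcal K$, which together with the isometry of the first step is precisely the statement of the lemma. I do not foresee any real obstacle: the construction is elementary, and the only points needing care are the positivity of $\Delta\varphi_\eps$ (a one-line computation, but the crux of the curvature condition) and the change of variable identifying $h$ with the cone metric $dr^2+(ar)^2d\theta^2$.
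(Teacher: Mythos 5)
Your proof is correct and follows essentially the same route as the paper: the same regularised conformal factor $(\rho^2+\eps^2)^{a-1}$, the same curvature computation via the conformal change formula, and the same change of variables $r=\rho^a/a$ identifying the limit with the cone metric. Your explicit evaluation $\Delta\varphi_\eps=2(a-1)\eps^2/(\rho^2+\eps^2)^2$ is in fact cleaner than the formula printed in the paper, and your remark that the curvature is only non-positive (vanishing identically when $a=1$) matches the paper's own conclusion.
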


\begin{proof}
Consider the following metric on $\R^2$: 
\begin{equation}
h_{\varepsilon}=(\varepsilon^2+\rho^2)^{a-1}(d\rho^2+\rho^2d\theta^2)
\end{equation}
We can compute its sectional curvature $\kappa_{\varepsilon}$ by applying the formulas for conformal changes of metrics (see for example \cite{Besse}): 
\begin{equation*}
\begin{split}
g = & e^{2f_{\eps}}(d\rho^2+\rho^2d\theta^2), \quad f_{\eps}=\frac{a-1}{2}\ln(\rho^2+\eps^2) \\
\kappa_{\varepsilon} & = e^{-2f_{\eps}}\Delta_g f_{\eps} = -\frac{2(a-1)\rho}{(\rho^2+\eps^2)^{a+1}}.
\end{split}
\end{equation*}
which is non-positive, since by hypothesis $a \geq 1$. When $\varepsilon$ tends to zero, the conformal factor $(\rho^2+\varepsilon^2)^{a-1}$ converges to $\rho^{2(a-1)}$ pointwise and uniformly in $C^{\infty}$ on any compact domain. As a consequence $h_{\varepsilon}(\rho,\theta)$ converges to 
\begin{equation*}
h(\rho,\theta)=\rho^{2(a-1)}(d\rho^2+\rho^2d\theta^2)
\end{equation*}
which is a Riemannian metric on $\R^2 \setminus \{\rho=0\}$. Now, $\R^2$ endowed with the metric $h$ is a surface with one conical singularity $0$, which is isometric to $C(\s^1_a)$ endowed with the metric $dr^2+(ar)^2d\theta^2$: it suffices to apply the change of variables $r=\rho^a/a$.
\end{proof}

A Cartan-Hadamard manifold is a complete, simply connected Riemann manifold with nonpositive sectional curvatures. The following conjecture is known as the Cartan-Hadamard conjecture or Aubin's conjecture (see for example $\cite{Rit}$ ):
\begin{conj}
Let $(M^n,g)$ be a Cartan-Hadamard manifold, whose sectional curvatures satisfy $\kappa \leq c\leq 0$. Then the isoperimetric profile $I_M$ of $M^n$ is bounded from below by the isoperimetric profile $I_c$ of the complete and simply connected space $M^n_c$, whose sectional curvatures are equal to $c$.
\end{conj}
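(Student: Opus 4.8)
The plan is to reduce the conjecture to a single comparison estimate for an isoperimetric minimizer and then to import the classical two-dimensional Gauss--Bonnet argument through the Gauss equation. First I would fix a volume $v>0$ and, by geometric measure theory, produce a region $E\subset M^n$ with $\vol(E)=v$ minimizing the boundary area. Because $M^n$ is diffeomorphic to $\R^n$ by the Cartan--Hadamard theorem, the space is noncompact and one must rule out loss of mass to infinity before existence is guaranteed; I would do this by a concentration--compactness argument, using that very large balls of $M^n$ already satisfy a comparison of the desired type. The boundary $\Sigma=\partial E$ of the minimizer is then a smooth embedded hypersurface of constant mean curvature $H$, regular away from a closed set of Hausdorff codimension at least $7$, and crucially it is \emph{stable}: the second variation of area at fixed enclosed volume is non-negative.

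The comparison engine is the Gauss equation combined with the sign of the ambient curvature. Writing $\overline{K}$ for the sectional curvature of $(M^n,g)$ and $\mathrm{II}$ for the second fundamental form of $\Sigma$, the intrinsic and extrinsic curvatures satisfy $K_\Sigma(e_i,e_j)=\overline{K}(e_i,e_j)+\langle \mathrm{II}(e_i,e_i),\mathrm{II}(e_j,e_j)\rangle-|\mathrm{II}(e_i,e_j)|^2$, so that the bound $\overline{K}\le c\le 0$ controls the intrinsic geometry of $\Sigma$ from above in terms of $H$ and the trace-free part of $\mathrm{II}$. In dimension $n=2$ this is exactly the classical statement $\int_{\partial E}\kappa_g\,ds=2\pi-\int_E K\,dA\ge 2\pi$, whence the sharp inequality $L^2\ge 4\pi A$ as in Weil and Beckenbach--Rad\'o. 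In dimension $n=3$ the surface $\Sigma$ admits Gauss--Bonnet, $\int_\Sigma K_\Sigma=2\pi\chi(\Sigma)$, and one checks $\chi(\Sigma)=2$ since $\Sigma$ bounds in the contractible space $M^3$; feeding in the Gauss equation and the stability inequality recovers Kleiner's sharp estimate. For $n=4$ and $c=0$ I would instead run Croke's integral-geometric argument, using the Santal\'o formula for the geodesic flow on the unit tangent bundle of $E$, a computation that closes only because of a dimension-specific sharp inequality.

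From either route one then extracts a differential inequality for the profile itself. The standard device is to show that an appropriate power of $I_M$ (the renormalisation natural to the model $M^n_c$) is concave and obeys the same ordinary differential inequality as the model profile $I_c$, with matching asymptotics as $v\to 0^+$, where both profiles are Euclidean. A maximum-principle comparison for this ODE then yields $I_M\ge I_c$ for every $v$, which is the assertion.

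The hard part will be the passage from the pointwise bound $\overline{K}\le c$ to a \emph{sharp integral} comparison in dimension $n\ge 5$. The low-dimensional arguments succeed because Gauss--Bonnet in dimension two supplies a topological constant matching the model exactly, and because in dimensions three and four the surplus terms in the Gauss equation and the Santal\'o formula can be absorbed by stability. In higher dimensions the Gauss--Bonnet--Chern integrand involves Pfaffian combinations of the full curvature tensor that are \emph{not} sign-controlled by a sectional-curvature bound alone, and the trace-free second fundamental form cannot be estimated away; no coercive estimate of the required sharpness is presently known. For this reason I would expect the scheme above to settle the conjecture only for $n\le 4$, and I regard the statement in general dimension as genuinely open.
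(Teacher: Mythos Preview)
The paper does not prove this statement: it is explicitly labeled and treated as a \emph{conjecture}. Immediately after stating it, the paper only records that the cases $n=2,3,4$ are known (citing Weil, Croke, and Kleiner) and then applies the two-dimensional case to $(\R^2,h_\varepsilon)$. There is therefore no ``paper's own proof'' to compare against.

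Your proposal is not a proof either, and you say as much in the last paragraph: you sketch the classical low-dimensional mechanisms (Gauss--Bonnet for $n=2$, Kleiner's stability plus Gauss--Bonnet on the minimizer for $n=3$, Croke's integral-geometric/Santal\'o argument for $n=4$) and then correctly observe that no sharp replacement is known for $n\ge 5$. That assessment matches the paper's stance. Two minor remarks on the sketch itself: (i) existence of isoperimetric minimizers in a noncompact Cartan--Hadamard manifold is not automatic, and the concentration--compactness step you allude to would itself require a nontrivial a priori inequality, so this is a genuine technical point rather than a routine one; (ii) the attribution in the paper (Croke for $n=3$, Kleiner for $n=4$) is reversed relative to the standard literature, and your attribution is the correct one.

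In short: there is no gap to flag beyond the one you already flag yourself, and there is no alternative route in the paper to contrast with --- the paper simply quotes the conjecture and uses the $n=2$ case as a black box.
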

This conjecture has been proved in dimension $n=2, 3, 4$ by A. Weil \cite{Weil}, C. Croke  \cite{Croke} and B. Kleiner \cite{Kleiner}. In our particular case, $(\R^2, h_{\varepsilon})$ is a Cartan-Hadamard manifold with $c=0$. As a consequence we have:

\begin{lemma}
\label{CH}
Let $h_{\eps}$ be the metric on $\R^2$ defined in the previous lemma. Then the isoperimetric profile $I_{h_{\eps}}$ of $(\R^2,h_{\eps})$ is bounded by below by the isoperimetric profile $I_2$ of $\R^2$ with the Euclidean metric.
\end{lemma}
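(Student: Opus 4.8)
The plan is to recognise $(\R^2,h_\eps)$ as a Cartan--Hadamard surface and then to quote the two--dimensional case of the Cartan--Hadamard conjecture, which here is a theorem (Weil), with vanishing curvature bound $c=0$.

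First I would check that, for each fixed $\eps>0$, $h_\eps$ is a smooth and complete Riemannian metric on the whole plane. Smoothness is immediate: the conformal factor $(\eps^2+\rho^2)^{a-1}$ is smooth and strictly positive everywhere, so there is no singularity at $\rho=0$ (the conical singularity appears only in the limit $\eps\to 0$). For completeness, when $a=1$ the metric is exactly the Euclidean one, and when $a>1$ the factor $(\eps^2+\rho^2)^{a-1}$ is increasing in $\rho$, so the $h_\eps$--length of any ray emanating from the origin dominates its Euclidean length and hence diverges; thus $(\R^2,h_\eps)$ is metrically complete. Since $\R^2$ is simply connected and, by the computation carried out in Lemma \ref{SeqMet1}, the Gaussian curvature $\kappa_\eps=-2(a-1)\rho\,(\rho^2+\eps^2)^{-(a+1)}$ is nonpositive under the hypothesis $a\ge 1$, the surface $(\R^2,h_\eps)$ is Cartan--Hadamard with $\kappa_\eps\le 0=c$.

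It then remains to apply the conjecture in dimension $n=2$, where it is known. The model space $M^2_0$ of constant curvature $0$ is precisely $\R^2$ with the Euclidean metric, whose isoperimetric profile is $I_2$, so the comparison yields $I_{h_\eps}(v)\ge I_2(v)$ for every $v>0$. Concretely, this is the sharp inequality $\bigl(L_{h_\eps}(\partial E)\bigr)^2 \ge 4\pi\,\vol_{h_\eps}(E)$, valid for all bounded domains $E\subset\R^2$ with rectifiable boundary.

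There is no genuine obstacle beyond this bookkeeping: the geometric content is entirely packaged in Weil's two--dimensional isoperimetric inequality for nonpositively curved simply connected surfaces. The one point deserving attention is the verification that $h_\eps$ is smooth and complete on all of $\R^2$ for every fixed $\eps>0$, since it is only the degenerate limit metric $h=\rho^{2(a-1)}(d\rho^2+\rho^2 d\theta^2)$ --- which is isometric to the cone $C(\s^1_a)$ --- that carries a singularity, and the purpose of this lemma is exactly to control that cone through the family of genuine Cartan--Hadamard approximants $h_\eps$.
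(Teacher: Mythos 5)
Your proof is correct and follows essentially the same route as the paper, which likewise observes that $(\R^2,h_\eps)$ is a Cartan--Hadamard surface with curvature bound $c=0$ and invokes Weil's two-dimensional case of the Cartan--Hadamard conjecture to compare $I_{h_\eps}$ with $I_2$. Your added verification that $h_\eps$ is smooth and complete on all of $\R^2$ for fixed $\eps>0$ is a worthwhile detail that the paper leaves implicit.
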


\subsection{Isoperimetric profiles}
We recall a result known in the literature as Ros Product Theorem and contained in $\cite{Ros}$, about the isoperimetric profiles of Riemannian products.
\begin{prop}[Ros Product Theorem]
Consider two Riemannian manifolds $(M_1,g_1)$ and $(M_2,g_2)$, $\mbox{dim}(M_2)=n$. Assume that the isoperimetric profile $I_2$ of $(M_2,g_2)$ is bounded by below by the isoperimetric profile $I_n$ of $\R^n$. Then the isoperimetric profile of the Riemannian product $(M_1\times M_2, g_1 + g_2)$ is bounded by below by the one of $(M_1\times \R^n, g_1 + \xi)$, where $\xi$ is the Euclidean metric. 
\end{prop}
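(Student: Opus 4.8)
The plan is to prove the product theorem by a fibrewise Schwarz symmetrization in the second factor, which reduces the perimeter comparison to a one–dimensional inequality in the $M_1$–variable. By the lower semicontinuity of the perimeter and a routine approximation, it suffices to show that for every $\Omega \subset M_1 \times M_2$ of finite perimeter and volume $V$ there is a set $\Omega^\star \subset M_1 \times \R^n$ with the same volume $V$ and with $\vol_{g_1+\xi}(\partial \Omega^\star) \le \vol_{g_1+g_2}(\partial\Omega)$; taking the infimum over all such $\Omega$ then yields $I_{M_1\times M_2}(V)\ge I_{M_1\times\R^n}(V)$, which is the statement.

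First I would set up the symmetrization. For $p\in M_1$ write $\Omega_p=\{q\in M_2:(p,q)\in\Omega\}$ and $v(p)=\vol_{g_2}(\Omega_p)$; by Fubini $v\in L^1(M_1)$, and a slicing argument shows that $v\in BV(M_1)$. I then define $\Omega^\star$ to be the subset of $M_1\times\R^n$ whose fibre over $p$ is the centered Euclidean ball $B(0,\rho(p))$ with $\vol(B(0,\rho(p)))=v(p)$. By construction $\vol(\Omega^\star)=\int_{M_1}v(p)\,dp=\vol(\Omega)=V$, so only the perimeter inequality remains.

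The core of the argument is a slicing inequality for the perimeter in a Riemannian product: for smooth $\Omega$,
\begin{equation*}
\vol_{g_1+g_2}(\partial\Omega)\ \ge\ \int_{M_1}\sqrt{P_{M_2}(\Omega_p)^2+|\nabla v(p)|^2}\,dp ,
\end{equation*}
with the obvious measure–theoretic version for general finite–perimeter sets. I would derive this from the coarea formula applied to $\pi_1\colon\partial\Omega\to M_1$: decomposing the unit normal at a boundary point into its horizontal and vertical parts, the vertical contribution integrates to $\int_{M_1}P_{M_2}(\Omega_p)\,dp$, the horizontal flux through the fibres controls the variation of $v$, and a pointwise Minkowski inequality recombines the two under the square root (and this is an identity when the $M_1$–factor is flat). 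On the other side, $\Omega^\star$ is a tube of balls of radius $\rho(p)$; parametrizing $\partial\Omega^\star$ by $p\mapsto(p,\rho(p)\,\omega)$ with $\omega\in\s^{n-1}$, and using $\vol(B(0,\rho))=\omega_n\rho^n$ together with $|\nabla v|=I_n(v)\,|\nabla\rho|$, a direct Jacobian computation gives the identity
\begin{equation*}
\vol_{g_1+\xi}(\partial\Omega^\star)\ =\ \int_{M_1}\sqrt{I_n(v(p))^2+|\nabla v(p)|^2}\,dp .
\end{equation*}
Combining these two facts with the pointwise bounds $P_{M_2}(\Omega_p)\ge I_{M_2}(v(p))$ (definition of the isoperimetric profile of $(M_2,g_2)$) and $I_{M_2}(v(p))\ge I_n(v(p))$ (the hypothesis of the theorem), and using that $t\mapsto\sqrt{t^2+c}$ is increasing, yields $\vol_{g_1+g_2}(\partial\Omega)\ge\vol_{g_1+\xi}(\partial\Omega^\star)$, as desired.

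I expect the main obstacle to be the rigorous proof of the slicing inequality for an arbitrary set of finite perimeter: one has to deal with the $BV$ (rather than smooth) regularity of $v$ on $M_1$, including a possible singular part of $Dv$; justify the splitting of the perimeter measure on the reduced boundary $\partial^\star\Omega$ into horizontal and vertical components via the coarea formula and the slicing theory of $BV$ functions; and carry out the reduction from a general $\Omega$ to the smooth case while controlling lower semicontinuity of the perimeter and $L^1$–convergence of the fibre–volume functions. By contrast, the symmetrization bookkeeping, the tube computation for $\Omega^\star$, and the final chain of inequalities are elementary.
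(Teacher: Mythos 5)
Your proof is correct and follows essentially the same route as the paper: the paper defines exactly this fibrewise Schwarz symmetrization $E\mapsto E^s$ and then cites Proposition 3.6 of Ros for the two facts $\vol(E^s)=\vol(E)$ and $\vol(\partial E^s)\leq \vol(\partial E)$, which are precisely what your slicing/coarea decomposition of the normal, the tube identity for $\vol(\partial\Omega^{\star})$, and the pointwise Minkowski inequality establish. The measure-theoretic issues you flag (the $BV$ regularity of the fibre-volume function $v$ and the possible singular part of $Dv$ coming from vertical walls) are genuine but standard, and they are exactly what the cited proposition of Ros handles; your smooth-case argument is sound as it stands.
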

The idea of the proof is to define an appropriate symmetrization for sets $E \subset M_1 \times M_2$.
Denote for simplicity $g=g_1+g_2$ and $g_0=g_1+\xi$. We consider for $x \in M_1$ the section $E(x)= E \cap ( \{x\}\times M_2)$. Then the symmetrization $E^s \subset M_1 \times \R^n$ will be the set defined by:
\begin{enumerate}
\item if $E(x)=\emptyset$, then $E^s(x)=\emptyset$;
\item if $E(x) \neq \emptyset$, then $E^s(x)=\{x\}\times B_r$, where $B_r$ is an euclidean ball in $\R^n$ such that $\vol_{\xi}(B_r)=\vol_{g_2}(E(x))$.
\end{enumerate}
By following Proposition $3.6$ in $\cite{Ros}$, $E^s$ satisfies that $\vol_{g_0}(E^s)=\vol_{g}(E)$ and $\vol_{g_0}(\partial E^s) \leq \vol_{g}(\partial E)$. This is enough to show that if $F \subset M_1\times M_2$ realizes the infimum in $I_{g}(v)$, i.e $\vol_g(F)=v$ and $\vol_g(\partial F)=I_g(v)$, then its symmetrization $F^s$ satisfies $\vol_{g_0}(F^s)=v$ and 
\begin{equation*}
I_{g_0}(v) \leq \vol_{g_0}(\partial F^s) \leq \vol_g(\partial F)
\end{equation*}
As a consequence, $I_g(v) \geq I_{g_0}(v)$ for any $v>0$.

\begin{prop}
\label{P_Isop}
Let $X=\R^{n-2}\times C(S^1_a)$ and denote by $g$ the metric $\xi+h$. Let $I_g$ be its isoperimetric profile. Then $I_g$ is coincides with the isoperimetric profile $I_n$ of $\R^n$ with the Euclidean metric.
\end{prop}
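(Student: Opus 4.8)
The plan is to obtain the lower bound $I_g\geq I_n$ by feeding the Cartan--Hadamard estimate of Lemma~\ref{CH} into Ros Product Theorem, and to obtain the matching upper bound $I_g\leq I_n$ by exhibiting Euclidean balls sitting isometrically inside $X$ away from the singular stratum.

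First I would pass from the smooth approximating metrics $h_{\eps}$ to the cone metric $h$ on $C(\s^1_a)$, i.e.\ prove $I_h\geq I_2$, where $I_2$ is the isoperimetric profile of the flat plane. Fix $v>0$ and a set $E\subset\R^2$ with $\vol_h(E)=v$; by a standard approximation we may take $E$ bounded, with smooth boundary, and with $0\notin\partial E$. For $a\geq 1$ the conformal factors $(\rho^2+\eps^2)^{a-1}$ are uniformly bounded on bounded sets for $\eps\leq 1$ and converge pointwise to the bounded function $\rho^{2(a-1)}$, while $h_{\eps}\to h$ in $C^{\infty}_{loc}(\R^2\setminus\{0\})$ by Lemma~\ref{SeqMet1}; hence dominated convergence gives $\vol_{h_{\eps}}(E)\to v$ and $\vol_{h_{\eps}}(\partial E)\to\vol_h(\partial E)$. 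Lemma~\ref{CH} yields $I_2(\vol_{h_{\eps}}(E))\leq I_{h_{\eps}}(\vol_{h_{\eps}}(E))\leq\vol_{h_{\eps}}(\partial E)$; letting $\eps\to 0$ and using the continuity of $I_2$ gives $I_2(v)\leq\vol_h(\partial E)$, and an infimum over $E$ gives $I_h\geq I_2$.

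Next I would apply Ros Product Theorem with $M_1=\R^{n-2}$ carrying the Euclidean metric and $M_2=C(\s^1_a)$ of dimension $2$: the hypothesis $I_{M_2}\geq I_2$ is exactly what was just shown (the cone point being a single point is harmless for the symmetrization argument behind the theorem), and the conclusion is that the isoperimetric profile of $X=\R^{n-2}\times C(\s^1_a)$ is bounded below by that of $\R^{n-2}\times\R^2=\R^n$, that is, $I_g\geq I_n$. For the reverse inequality, recall that for $a\geq 1$ the circle $\s^1_a$ has length at least $2\pi$, so $C(\s^1_a)$ is a $\mathrm{CAT}(0)$ space (a standard fact about Euclidean cones over circles, see \cite{BBI}); as a product of $\mathrm{CAT}(0)$ spaces, $X$ is $\mathrm{CAT}(0)$, so its metric balls are convex. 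Pick $q=(y_0,p_0)\in X$ with $p_0$ at distance $R$ from the apex, so that $d_g(q,\Sigma)=R$ with $\Sigma=\R^{n-2}\times\{0\}$. If $\rho<R$, the ball $B_X(q,\rho)$ misses $\Sigma$, hence lies in the flat open set $X\setminus\Sigma$; being convex, flat and (as a convex subset of a $\mathrm{CAT}(0)$ space) simply connected, it develops by an injective local isometry onto the Euclidean ball $B_{\R^n}(\mathrm{dev}(q),\rho)$. Therefore $\vol_g\big(B_X(q,\rho)\big)$ and $\vol_g\big(\partial B_X(q,\rho)\big)$ take their Euclidean values; choosing $\rho$ (and $R>\rho$, which is possible for all $v$ since $R$ is unrestricted) so that the Euclidean $n$-ball of radius $\rho$ has volume $v$ gives $I_g(v)\leq I_n(v)$ for every $v>0$. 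Combined with the lower bound this yields $I_g=I_n$, and shows that the Euclidean balls disjoint from $\Sigma$ are isoperimetric domains.

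I expect the main difficulty to be the limiting argument of the second paragraph: one must check that restricting the infimum defining $I_h$ to bounded smooth competitors avoiding $0$ does not change it, and that the degeneration of the conformal factor at the cone point contributes nothing to either the volume or the perimeter---this is ultimately where the hypothesis $a\geq 1$ is used. By contrast the $\mathrm{CAT}(0)$ input and the mildly singular application of Ros' theorem are routine, though each deserves a sentence of justification.
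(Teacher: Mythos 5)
Your argument is correct and uses the same ingredients as the paper (Lemma \ref{SeqMet1}, Lemma \ref{CH}, Ros Product Theorem, convergence of the conformal factor, and Euclidean balls avoiding $\Sigma$ for the upper bound), but you commute two operations and this changes the character of the proof. The paper keeps everything smooth as long as possible: it applies Ros Product Theorem to the genuinely Riemannian products $(\R^{n-2}\times\R^2,\xi+h_{\eps})$ to get $I_{\eps}\geq I_n$, writes the resulting isoperimetric inequality for a fixed competitor $E\subset X$, and only then lets $\eps\to 0$, using that the conformal factor converges and that $\R^{n-2}\times\{0\}$ has zero $(n-1)$-measure. You instead pass to the limit first, in dimension $2$, to get $I_h\geq I_2$ on the singular cone $C(\s^1_a)$ itself, and then apply Ros Product Theorem with the singular factor $M_2=C(\s^1_a)$. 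Your limiting step in dimension $2$ is fine and is the same computation the paper does in dimension $n$. The cost of your ordering is that Ros' theorem, as stated and as proved in \cite{Ros}, is for Riemannian manifolds: applying it with a conical $M_2$ requires rechecking that the slicewise symmetrization and the Fubini/coarea inequality for $\vol(\partial E^s)\leq\vol(\partial E)$ survive the presence of the cone point. This is plausible (the singular set $M_1\times\{0\}$ is negligible for both volume and perimeter), but it is precisely the kind of verification the paper's ordering makes unnecessary, so you should either supply it or reorder as the paper does. For the upper bound, your CAT$(0)$/developing-map argument is a careful justification of what the paper simply asserts, namely that geodesic balls of $X$ not meeting $\Sigma$ are isometric to Euclidean balls and hence realize $c_n$; this buys rigor at no real extra cost, though note that for $a=1$ the cone is just flat $\R^2$ and the convexity input is only needed for $a>1$.
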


\begin{proof}
We will show firstly that $I_g$ is bounded by below by $I_n$. Consider $\R^{n-2}\times\R^2$ endowed with the metric $g_{\varepsilon}=\xi+ h_{\varepsilon}$, and denote by $I_{\varepsilon}$ the isometric profile with respect to this metric. Thanks to Lemma $\ref{CH}$ and to Ros Product theorem
we deduce that $I_{\varepsilon}$ is bounded by below by the isoperimetric profile of $\R^{n-2}\times \R^2$ with the euclidean metric, i.e. $I_n$. \\
Therefore we have for any bounded domain $E \subset X$, $\vol_{g_{\varepsilon}}(\Omega)=v$, with smooth boundary $\partial E$:
\begin{equation}
\label{isopEps}
\frac{\vol_{g_{\varepsilon}} (\partial E)}{\vol_{g_{\varepsilon}}(E)^{1-\frac{1}{n}}}\geq \frac{I_{\varepsilon}(v)}{v^{1-\frac{1}{n}}} \geq c_n
\end{equation}
where $c_n$ is the optimal constant appearing in the isoperimetric inequality in $\R^n$. When we pass to the limit as $\eps$ tends to zero, the volumes of both $E$ and $\partial E$ with respect to $g_{\eps}$ converge to the volumes with respect to $g$. 

In face, if we denote by $dx$ the $n$-dimensional Lebesgue measure on $\R^n$ and by $d\sigma$ the volume element induced on $\partial E$ by the Euclidean metric, we have for the volume of $E$:
\begin{equation*}
\lim_{\eps \rightarrow 0}\vol_{g_{\eps}}(E)= \int_{E} (\rho^2+\eps^2)^{(a-1)} dx =\vol_g (E)
\end{equation*}
since $(\rho^2+\eps^2)^{(a-1)}$ converges to $\rho^{2(a-1)}$ on any bounded domain. As for the volume of $\partial E$ we get:
\begin{align*}
\lim_{\eps \rightarrow 0}\vol_{g_{\eps}}(\partial E) &= 
\lim_{\eps \rightarrow 0} \int_{\partial E}(\rho^2+\eps^2)^{(a-1)} d\sigma \\
&= \lim_{\eps \rightarrow 0} \int_{\partial E \setminus\R^{n-2} \times  \{0\}}(\rho^2+\eps^2)^{(a-1)} d\sigma \\
&=  \int_{\partial E \setminus \R^{n-2} \times\{0\}}\rho^{2(a-1)} d\sigma \\
&= \int_{\partial E} \rho^{2(a-1)}d\sigma=\vol_g(\partial E).
\end{align*}
where we used again the convergence of the conformal factor and the fact that $\R^{n-2}\times \{0 \}$ has zero $(n-1)$-dimensional Lebesgue measure. 

Therefore when we pass to the limit as $\eps$ goes to zero in $\ref{isopEps}$ we obtain:
\begin{equation}
\label{isopIn}
\frac{\vol_{g} (\partial E)}{\vol_{g}(E)^{1-\frac{1}{n}}} \geq c_n
\end{equation}
Observe that $\R^{n-2}\times C^1(S_a)$ contains euclidean balls: they are the geodesic balls $\mathbb{B}^n$ not intersecting the singular set $\R^{n-2}\times \{ 0\}$. They realize $c_n$, so that for any $v>0$ the infimum defining $I_g(v)$ is attained by the euclidean geodesic ball of volume $v$, i.e. $I(v)=c_n v^{1-\frac{1}{n}}$. As a consequence, the isoperimetric profile $I_g$ coincides with $I_n$.
\end{proof}

\subsection{Yamabe constant of $\R^{n-2} \times C(S^1_a)$}

We have found an optimal constant for the isoperimetric inequality $\eqref{isopIn}$ with respect to a metric $g=\xi+h$ on $X=\R^{n-2}\times C(S^1_a)$. Such metric is isometric to $\xi+dr^2+(ar)^2d\theta^2$ on $X$, so they obviously define the same conformal class. As a consequence, we can compute the Yamabe constant of $\R^{n-2}\times C(S^1_a)$, as the following proposition shows.

\begin{prop}
The Yamabe constant of $X=\R^{n-2}\times C(S^1_a)$, $a>1$, is equal to the Yamabe constant $Y_n$ of the standard sphere of dimension $n$.
\end{prop}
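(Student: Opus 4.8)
The plan is to deduce the value of the Yamabe constant from the sharp isoperimetric inequality established in Proposition~\ref{P_Isop}, following the classical symmetrization argument of Talenti. First I would recall that on $X=\R^{n-2}\times C(\s^1_a)$ the isoperimetric profile $I_g$ coincides with the Euclidean profile $I_n(v)=c_n v^{1-1/n}$, and that on a space carrying such an isoperimetric inequality one obtains, by the coarea formula and the usual rearrangement argument, the sharp $L^1$ Sobolev inequality $\norm{f}_{n/(n-1)}\leq C\norm{df}_1$ for all compactly supported Lipschitz $f$ on the regular part $\Omega=X\setminus(\R^{n-2}\times\{0\})$, with exactly the Euclidean constant $C$. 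The point is that the singular set $\R^{n-2}\times\{0\}$ has codimension $2$, hence capacity zero, so Lipschitz functions on $X$ can be approximated by functions supported away from it, and the isoperimetric inequality is genuinely the Euclidean one with Euclidean optimal constant.

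Next I would upgrade this to the sharp $L^2$ Sobolev inequality. Applying the $L^1$ inequality to a suitable power $|f|^{\beta}$ and using H\"older, as in Talenti's paper \cite{Talenti}, gives for $p=2$ the inequality $\norm{f}_{2n/(n-2)}\leq C_{n,2}\norm{df}_2$ with $C_{n,2}$ the \emph{optimal} Euclidean constant, the one realized by the Aubin--Talenti instantons on $\R^n$. Here it is essential that the constant obtained is sharp; this follows because the Euclidean balls in $X$ not meeting the singular set realize the isoperimetric constant $c_n$ (as observed in the proof of Proposition~\ref{P_Isop}), so no loss is incurred at any step of the iteration from $L^1$ to $L^2$.

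Then I would translate the sharp $L^2$ Sobolev inequality into a statement about the Yamabe constant. The metric $g=\xi+dr^2+(ar)^2d\theta^2$ is scalar-flat on $\Omega$ (the product of flat $\R^{n-2}$ with the flat cone $C(\s^1_a)$ has $S_g\equiv 0$), so the Yamabe functional reduces to $\int_X|du|^2\,dv_g\big/\norm{u}_{2n/(n-2)}^2$, whose infimum over $u\in W^{1,2}_0(\Omega)$ is precisely the best constant in the $L^2$ Sobolev inequality. Hence $Y(X,[g])=C_{n,2}^{-2}$, which equals $Y_n$ since the sharp Euclidean Sobolev constant is exactly the one that computes the Yamabe constant of the round sphere via stereographic projection. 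Finally I would note that the general inequality $\Yl{X}\le Y_n$ combined with this computation shows the value is attained, so $Y(\R^{n-2}\times C(\s^1_a),[g])=Y_n$.

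The main obstacle I anticipate is the careful justification that the sharp Euclidean Sobolev constant is genuinely preserved through the symmetrization/rearrangement argument in the singular setting: one must check that test functions can be taken with support in $\Omega$ without changing the infimum (using the vanishing $2$-capacity of the singular stratum), that the coarea formula and the layer-cake computation go through with $dv_g$ in place of Lebesgue measure, and that the equality case (Euclidean balls/instantons away from $\Sigma$) really is admissible so that no strict inequality creeps in. Once these density and measure-theoretic points are settled, the Yamabe constant computation is immediate.
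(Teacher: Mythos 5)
Your proposal is correct and follows essentially the same route as the paper: the Euclidean isoperimetric inequality from Proposition \ref{P_Isop}, Talenti's symmetrization/coarea argument to obtain the sharp Sobolev inequality with the Euclidean optimal constant $C_{n,2}$, and the observation that $g$ is scalar-flat so the Yamabe functional reduces to the Sobolev quotient, giving $Y(X,[g])=C_{n,2}^{-2}=Y_n$. The additional points you flag (density of test functions supported away from the codimension-$2$ singular set, validity of the coarea computation with $dv_g$) are consistent with the paper's standing assumptions and do not change the argument.
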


\begin{proof}
In the Euclidean space $\R^n$, the existence of the isoperimetric inequality leads to the existence of a sharp Sobolev inequality: for any $1<p<n$ and for any $f \in W^{1,p}(\mathbb{R}^n)$:
\begin{equation}
\label{optSob}
\norm{f}_{q}\leq C_{n,p}\norm{df}_p, \quad q=\frac{np}{n-p}.
\end{equation}
The constant $C_{n,p}$ is optimal in the sense that it attains:
\begin{equation}
\label{ratio}
\displaystyle C_{n,p}^{-1}=\inf_{\underset{f \neq 0}{f\in W^{1,p}(\R^n)}} \frac{\norm{du}_p}{\norm{u}_q}.
\end{equation}
We briefly recall the ideas of the proof given by G. Talenti in $\cite{Talenti}$. For any Lipschitz function $u$ we can define the symmetrization $u^*$ in the following way: for any $t \in \R$, the level sets $E^*_t=\{x \in \R^n : u^*(x)>t\}$ of $u^*$ are Euclidean $n$-balls having the same volume as the level sets $E_t$ of $u$. Then $u$ is spherically symmetric and Lipschitz. It is possible to show that this kind of symmetrization makes the ratio $\eqref{ratio}$ decrease: from Lemma 1 in $\cite{Talenti}$ we have that for any $1<p<n$
\begin{equation*}
\norm{u}_q=\norm{u^*}_q \quad \mbox{ and } \quad \norm{du}_p\geq \norm{du^*}_p.
\end{equation*}
The first equality is trivial. The second inequality is deduced by using isoperimetric inequality and coarea formula, which relates the integral of $|du|$ with the $(n-1)$-measure of the boundaries $\partial E_t$ of level sets. \\
As a consequence, the infimum in $\eqref{ratio}$ is attained by spherically symmetric functions. Classical argument in the calculus of variations allows to prove that there exists a minimizer. Moreover, G. Talenti exhibits a family of functions attaining $C_{n,p}$ and gives its exact value. \\
When $p=2$, $(C_{n,2})^{-2}$ coincides with the Yamabe constant $Y_n$ of the $n$-dimensional sphere. This is shown by pulling back the functions attaining $C_{n,2}$ from $\R^n$ to the sphere $\s^n$ without the north pole.\\ 
In our case, $X=\R^{n-2}\times C^(\s^1_a)$ is flat and satisfies the Euclidean isoperimetric inequality $\eqref{isopIn}$. We can then repeat the same argument as Talenti to deduce that the inequality $\eqref{optSob}$ holds on $X$ with the same optimal constant as in $\R^n$. Furthermore, by definition of the Yamabe constant, and since $S_g=0$, we have:
\begin{equation*}
\displaystyle Y(X,[g])=\inf_{\underset{u>0}{u\in W^{1,2}(X)}}\frac{\int_X|du|^2 dv_g}{\norm{u}_{\frac{2n}{n-2}}^2}.
\end{equation*}
so that $Y(X,[g])$ is equal to $(C_{n,2})^{-2}$. We have then proved $Y(X,[g])=Y_n$. 
\end{proof}

\section{Conclusion}

Our results allows us to compute the Yamabe constant of an Einstein admissible stratified space, as Corollary $\ref{YamabeConst}$ states. Moreover, we can deduce from them an explicit value for the local Yamabe constant of a stratified space whose links are endowed with an Einstein metric. 

Let $(X,g)$ be a compact stratified space with strata $X_j$, $j=1  \ldots n$. Assume that each of its links $Z_j$ admits an Einstein metric $k_j$ such that
\begin{equation*}
Ric_{k_j}=(d_j-1)k_j
\end{equation*}
where $d_j$ is the dimension of $Z_j$. We have two possibilities: either $Z_j$ is an admissible stratified space, or it is a circle of radius $a\geq 1$. In both cases we are able to compute the Yamabe constant of $\R^{n-d_j-1}\times Z_j$. This leads to the following:

\begin{prop}
Let $(X,g)$ be a compact stratified space with strata $X_j$, $j=1, \ldots N$. Assume that each link $Z_j$ of dimension $d_j$ is endowed  with an Einstein metric $k_j$, such that $Ric_{k_j}=(d_j-1)k_j$. Then the local Yamabe constant of $X$ is given by:
\begin{equation*}
\Yl{X}= \inf\left\{ Y_n,  \left(\frac{\vol_{k_1}(Z_1)}{\vol(\s^{d_1})}\right)^{\frac{2}{n}}Y_n, \ldots \left( \frac{\vol_{k_N}(Z_N)}{\vol(\s^{d_N})}\right)^{\frac{2}{n}}Y_n \right\}.
\end{equation*}
\end{prop}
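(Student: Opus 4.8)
The plan is to obtain the formula as a bookkeeping consequence of Lemma~\ref{LYe}, of the Proposition of Section~4 computing the Yamabe constant of $\R^{n-2}\times C(\s^1_a)$, and of the reduction of the local Yamabe constant to a finite minimum over the strata. First I would recall, from the local structure of $X$ in Section~1 and from \eqref{lYc}, that
\[
\Yl{X}=\inf_{x\in X}\bigl\{\,Y(C(S_x),[dr^2+r^2h_x])\,\bigr\}.
\]
Since $X$ has finitely many strata and, at a point $x$ lying in the stratum with link $Z_j$ (a stratum of dimension $n-d_j-1$), the tangent cone $C(S_x)$ is isometric to the fixed model
\[
M_j:=\bigl(\R^{n-d_j-1}\times C(Z_j),\ dy^2+dr^2+r^2k_j\bigr),
\]
which does not depend on the chosen point because $k_j$ is a genuine (global) Einstein metric on $Z_j$, the infimum collapses to the finite minimum
\[
\Yl{X}=\inf\bigl\{\,Y_n,\ Y(M_1),\ \ldots,\ Y(M_N)\,\bigr\},
\]
where the value $Y_n$ is the contribution $Y(\R^n,[\xi])$ of the regular stratum $\Omega$.

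Next I would compute each $Y(M_j)$. By hypothesis $Ric_{k_j}=(d_j-1)k_j$, so $(Z_j,k_j)$ is either an admissible stratified space or a circle $\s^1_{a_j}$ of radius $a_j\geq 1$. In the first case Lemma~\ref{LYe} applies verbatim and gives
\[
Y(M_j)=\Bigl(\frac{\vol_{k_j}(Z_j)}{\vol(\s^{d_j})}\Bigr)^{\frac{2}{n}}Y_n .
\]
In the second case one has $d_j=1$, hence $M_j=\R^{n-2}\times C(\s^1_{a_j})$, and the Section~4 Proposition yields $Y(M_j)=Y_n$; note that here $\bigl(\vol_{k_j}(Z_j)/\vol(\s^{d_j})\bigr)^{2/n}Y_n=a_j^{2/n}Y_n\geq Y_n=Y(M_j)$, with equality exactly when $a_j=1$.

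Finally I would assemble the two cases. For an admissible link the quantity $Y(M_j)$ is literally the $j$-th entry of the asserted list, so such entries may be substituted directly. For a circle link of radius $a_j\geq1$ the asserted $j$-th entry is $a_j^{2/n}Y_n\geq Y_n=Y(M_j)$; since $Y_n$ is already present in the infimum (it is supplied by the regular stratum), enlarging that entry from $Y_n$ to $a_j^{2/n}Y_n$ does not change the infimum. Hence
\[
\Yl{X}=\inf\Bigl\{\,Y_n,\ \Bigl(\tfrac{\vol_{k_1}(Z_1)}{\vol(\s^{d_1})}\Bigr)^{\frac{2}{n}}Y_n,\ \ldots,\ \Bigl(\tfrac{\vol_{k_N}(Z_N)}{\vol(\s^{d_N})}\Bigr)^{\frac{2}{n}}Y_n\,\Bigr\},
\]
which is the claimed formula.

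There is essentially no hard analytic step left here: the proposition is a corollary of the two model computations and of the reduction \eqref{lYc}. The only point that requires care is the last one, namely a codimension-$2$ stratum with cone angle $\geq2\pi$: there the term written in the statement strictly overestimates the true local contribution $Y_n$ (unless the angle is exactly $2\pi$), yet the equality for $\Yl{X}$ survives precisely because the regular stratum always contributes the value $Y_n$. I would also make explicit that it is the dichotomy ``each link is either admissible or a circle of radius $\geq1$'' --- and not the Einstein condition alone --- that makes every $Y(M_j)$ accessible to Lemma~\ref{LYe} or to the Section~4 Proposition.
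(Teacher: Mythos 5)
Your proposal is correct and follows essentially the same route as the paper, which presents this proposition as an immediate bookkeeping consequence of the finite-minimum form of $\Yl{X}$, of Lemma~\ref{LYe} for admissible Einstein links, and of the Section~4 computation for $\R^{n-2}\times C(\s^1_a)$ with $a\geq 1$. Your explicit remark that for a circle link of radius $a_j>1$ the listed entry $a_j^{2/n}Y_n$ strictly exceeds the true local contribution $Y_n$, yet leaves the infimum unchanged because the regular stratum already supplies $Y_n$, is a careful point the paper passes over in silence and is worth keeping.
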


\bibliographystyle{amsalpha}
\bibliography{biblio}

\end{document}